\theoremstyle{plain}
\newtheorem{theorem}{Theorem}[section]
\newtheorem{lemma}[theorem]{Lemma}
\newtheorem{prop}[theorem]{Proposition}
\newtheorem{cor}[theorem]{Corollary}
\theoremstyle{definition}
\newtheorem{defn}[theorem]{Definition}
\newtheorem{hyp}{Hypothesis}
\newtheorem{step}{Summary of Step}
\theoremstyle{remark}
\newtheorem{remark}{Remark}
\title{
Higher-order phase reduction for delay-coupled oscillators beyond the phase-shift approximation
}
\author{Christian Bick, Bob W. Rink and Babette A. J. de Wolff}
\DeclareMathOperator{\im}{im}
\newcommand{\ord}[2]{{#1}^{(#2)}}
\newcommand{\comp}[2]{{#1}_{#2}}
\newcommand{\torus}[1]{\mathbb{T}^{#1}}
\newcommand{\statedim}{n} 
\newcommand{\torusdim}{m} 
\newcommand{\cc}{\rho} 
\DeclarePairedDelimiter{\dotp}{\langle}{\rangle}
\renewcommand{\phi}{\varphi}
\renewcommand{\epsilon}{\varepsilon}
\newcommand{\Tg}{T}
\newcommand{\el}{\ell}
\newcommand{\El}{\mathcal{L}}
\begin{document}

\maketitle

\begin{abstract}
Network interactions between dynamical units are often subject to time delay.
We develop a phase reduction method for delay-coupled oscillator networks.
The method is based on rewriting the delay-differential equation as an ordinary differential equation coupled with a transport equation, expanding in the coupling strength, and solving the resulting equations order-by-order.
This approach yields an approximation of the finite-dimensional phase dynamics to arbitrary order.
While in the first-order approximation the time delay acts as a phase shift as expected, the higher-order phase reduction generally displays a less trivial dependence on the delay. 
  In particular, exploiting second-order phase reduction, we prove the existence of a region of bistability in the synchronization dynamics of two delay-coupled Stuart--Landau oscillators.
\end{abstract}

\section{Introduction}

\newcommand{\eps}{\varepsilon}

Interaction delays are often an essential part of coupled dynamical systems.
For example, transmission delays are common in networks where transmission along physical connections happens at finite speed.
To account for interaction delays, delay differential equations (DDEs) provide a suitable class of model equations to understand the collective network dynamics. 
A prototypical example is a network of~$m\in\mathbb{N}$ coupled dynamical units whose states $x_j\in\mathbb{R}^{\statedim_j}$ evolve according to
\begin{align}\label{eq:IntroNet}
    \dot x_j(t) &= F_j(x_j(t)) + \varepsilon \sum_{k=1}^\torusdim G_{j,k}\left(x_j(t), x_k(t-\tau_{j,k})\right),
\end{align}
where~$F_j$ determines the intrinsic dynamics of each unit, $\eps$~denotes the coupling strength, and~$G_{j,k}$ determines the influence node~$k$ has on node~$j$ with time delay~$\tau_{j,k} > 0$.

Synchronization is a common collective phenomenon if each unit is oscillatory, that is, the dynamics of each uncoupled node $x_j(t) = F_j(x_j(t))$ has a normally hyperbolic periodic orbit.
Time delays in the network coupling influence the synchronization dynamics~\cite{Pazo2016}, which is important for brain dynamics and function~\cite{Deco2009,Petkoski2016}. 
Conversely, time-delayed interactions can also be used to control the collective dynamics of entire networks~\cite{Kori2008,Kiss2018}.
However, the infinite dimensional nature of the dynamical system~\eqref{eq:IntroNet} makes it challenging to analyze the system directly, whether with analytical or numerical means, to understand how node and network properties shape the synchronization dynamics.

\newcommand{\R}{\mathbb{R}}
\newcommand{\tr}{\mathsf{T}}
\newcommand{\Tm}{\torus{\torusdim}}

Here we develop a systematic approach to \emph{phase reduction} to reduce the dimension of delay-coupled oscillator networks.
In contrast to traditional phase reduction approaches~\cite{Nakao2015,Pietras2019} and related reduction techniques for oscillatory dynamics with delay (see, e.g.,~\cite{Kotani2020,Nicks2024}), our approach builds on recent work~\cite{VonderGracht2023a} to compute an asymptotic expansion for an invariant torus in phase space and the dynamics thereon.
Note that for $\epsilon = 0$ the system~\eqref{eq:IntroNet} has a normally hyperbolic $m$-dimensional invariant torus---the product of the $m$~hyperbolic periodic orbits in each uncoupled oscillator---which persists for small coupling $\epsilon > 0$~\cite{Fenichel1972}.
Phase reduction yields the evolution equations for the phase variables~$\phi\in\Tm$ parameterizing the torus. This evolutions is determined by a (finite-dimensional) ordinary differential equation (ODE)
\begin{equation} \label{eq:phase_intro}
    \dot{\phi}(t) = \ord{f}{\eps}(\phi(t)).
\end{equation}
The ``physical'' oscillator states~$x=(x_1, \dotsc, x_\torusdim)\in\R^n$ and the phases~$\phi$ are related through maps
\begin{align} \label{eq:embeddings_intro}
    \ord{e}{\epsilon}&: \torus{\torusdim} \to \R^\statedim, \qquad 
 \ord{E}{\epsilon}: \torus{\torusdim} \times [-\tau, 0] \to \R^\statedim
\end{align}
which describe the phase and its history on the invariant torus, respectively.
We show that these maps send solutions of the phase equations~\eqref{eq:phase_intro} to solutions of the unreduced dynamics~\eqref{eq:IntroNet} (in a way to be made precise) exactly when they satisfy the \emph{conjugacy equations}
\begin{subequations}
\label{eq:conjeqn_intro}
\begin{align}
    \partial_{\phi}  \ord{e}{\epsilon}_j(\phi) \cdot \ord{f}{\eps}(\phi)  & =  F_j(\ord{e}{\epsilon}_j(\phi)) +\epsilon \sum_{k=1}^m G_{j,k}(\ord{e}{\epsilon}_j(\phi), \ord{E}{\epsilon}_k(\phi, -\tau_{j,k})),  
    \\ 
    \partial_{\phi} \ord{E}{\eps}_j(\phi, s)\cdot \ord{f}{\eps}(\phi)  & = \partial_{s} \ord{E}{\eps}_j(\phi, s),
\end{align}
\end{subequations}
where the subindex~$j$ denotes the $j$th~entry.

The main result of this manuscript is that we prove that the phase dynamics~$\ord{f}{\epsilon}$ together with $\ord{e}{\epsilon}, \ord{E}{\epsilon}$ can be computed in a constructive way.
Our result can be stated as the following theorem.
Although we prove it for a more general class of delay equations, it is formulated here for the specific equations~\eqref{eq:IntroNet} and its conjugacy equations~\eqref{eq:conjeqn_intro}.

\begin{theorem} 
\label{thm:intro}
The conjugacy equations ~\eqref{eq:conjeqn_intro} can be solved to arbitrary precision in~$\epsilon$,  yielding an approximation of the invariant torus (determined by $\ord{e}{\eps}$,~$\ord{E}{\eps}$) and the dynamics thereon (given by~$\ord{f}{\eps}$).
In particular, we can compute the phase dynamics~\eqref{eq:phase_intro} on the invariant torus up to arbitrary order in~$\epsilon$.  
\end{theorem}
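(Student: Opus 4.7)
The plan is to solve the conjugacy equations by formal expansion in $\epsilon$, substituting
\[
\ord{e}{\epsilon} = \sum_{k\geq 0}\epsilon^k e_k, \qquad \ord{E}{\epsilon} = \sum_{k\geq 0}\epsilon^k E_k, \qquad \ord{f}{\epsilon} = \sum_{k\geq 0}\epsilon^k f_k
\]
into \eqref{eq:conjeqn_intro}, Taylor-expanding $F_j$ and $G_{j,k}$ around the unperturbed torus, and matching powers of $\epsilon$. At order zero, the first conjugacy equation reads $\partial_\phi e_{0,j}(\phi)\cdot f_0(\phi)=F_j(e_{0,j}(\phi))$, and since each uncoupled node has a hyperbolic limit cycle it is solved by $e_{0,j}(\phi)=\gamma_j(\phi_j)$ together with $f_0=(\omega_1,\dots,\omega_m)$, the vector of natural frequencies. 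The transport equation then forces $E_{0,j}(\phi,s)=\gamma_j(\phi_j+\omega_j s)$, obtained by integration along the characteristics of $\omega\cdot\partial_\phi-\partial_s$.

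At order $k\geq 1$, both equations together reduce to a linear inhomogeneous system of the form $\El(e_k,E_k,f_k)=R_k$, where $\El$ encodes the Floquet linearisation of each $F_j$ along $\gamma_j$ together with the transport operator $\omega\cdot\partial_\phi-\partial_s$, and the right-hand side $R_k$ is built entirely from the inductively known lower-order data $(e_\el,E_\el,f_\el)_{\el<k}$. The main obstacle is the solvability of this linear system at each order: because each Floquet linearisation has the neutral tangent direction $\dot\gamma_j$ in its kernel, $\El$ carries an $m$-dimensional cokernel, naturally spanned by the adjoint Floquet eigenvectors (equivalently, the phase response curves $Z_j$). A Fredholm-type argument shows that the system is solvable iff $R_k$ is orthogonal to this cokernel, and the $m$ free scalar functions $f_{k,1},\dots,f_{k,m}$ can be chosen to enforce precisely these $m$ solvability conditions, determining $f_k$ as the appropriate $Z_j$-projection of the inductive data. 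This is the delay analogue of the order-by-order reduction carried out in the ODE setting of \cite{VonderGracht2023a}.

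Once $f_k$ has been fixed, $e_k:\Tm\to\R^\statedim$ is determined by the first conjugacy equation up to an irrelevant kernel element (removed by normalising the phase), and $E_k$ is reconstructed from the transport equation by integrating along characteristics on $\Tm\times[-\tau,0]$ using the boundary data $E_k(\phi,0)=e_k(\phi)$. Since the characteristics of $\omega\cdot\partial_\phi-\partial_s$ foliate the slab transversally to $s=0$, this reconstruction is routine and introduces no further obstruction. The principal novelty relative to the ODE setting of \cite{VonderGracht2023a} is precisely the coupling between the ODE for $e_k$ and the transport PDE for $E_k$, and I would expect most of the technical work to concern the smoothness and well-posedness of this coupled reconstruction. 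Iterating through all orders then produces the formal power-series solution asserted in the theorem.
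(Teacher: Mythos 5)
Your proposal follows the same route as the paper: formal $\epsilon$-expansion, zeroth-order data from the uncoupled limit cycles, order-by-order linear homological equations whose right-hand sides depend only on inductively-known lower-order data (Lemma~\ref{lem:con_order_j}), and reconstruction of $E_k$ by integrating the transport operator along its characteristics (Lemma~\ref{lem:TranspEq}). Where you invoke an abstract ``Fredholm-type argument,'' the paper works it out explicitly: it constructs from Floquet theory a normal bundle $N(\phi)$ and a hyperbolic matrix $L$ (Lemma~\ref{lem:N_L}), splits the finite-dimensional homological equation into tangential and normal parts (Lemma~\ref{lem:con_order_splitted}), and solves these in Fourier series (Lemmas~\ref{lem:SolTang}--\ref{lem:SolNorm}); the normal part has a unique solution because $L$ is hyperbolic, and the tangential part is always solvable because the free $f_\el$ (precisely your PRC-projection $[\Tg(\phi)]^{+}\pi(\phi)\ord{\eta}{\el}(\phi)$) can absorb the entire tangential right-hand side. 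Two points to tighten. First, your statement that the system ``is solvable iff $R_k$ is orthogonal to the cokernel'' is not quite right as phrased: with $f_k$ itself unknown, the system is \emph{always} solvable; the Fredholm condition is what characterizes the minimal (resonant, normal-form) choice of $f_k$, and the cokernel is $m$-dimensional as a vector space only for rationally independent frequencies -- for resonant tori such as the equal-frequency Stuart--Landau pair, a whole sublattice of Fourier modes is resonant, and it is the $m$ free \emph{functions} $f_{\el,j}$ that absorb them. Relatedly, if one insists that $f_\el$ carry only resonant modes, small divisors $\langle\mathbf{k},\omega\rangle$ appear; the paper controls this by truncating the normal form at a finite order $K$, which is not needed for bare solvability (take $\ord{g}{\el}=0$) but is what you use implicitly when extracting the phase-difference dynamics. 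Finally, your guess that the main technical burden lies in well-posedness of the coupled $e_k/E_k$ reconstruction is misplaced -- Lemma~\ref{lem:TranspEq} gives an explicit closed formula along characteristics; the genuine work is the Floquet/Fourier machinery for the finite-dimensional part of the homological equation.
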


Our approach to deriving  equations~\eqref{eq:conjeqn_intro} and proving Theorem~\ref{thm:intro} proceeds in four steps in Section~\ref{sec:method}. 
The first step is to show that solutions of the DDE~\eqref{eq:IntroNet} correspond to solutions of an ODE coupled to a transport equation; this essentially allows us to describe the dynamics of~\eqref{eq:IntroNet} by means of an ODE on a Banach space. 
Second, we conjugate the dynamics of the reduced ODE~\eqref{eq:phase_intro} to the dynamics of the ODE-transport system, and from there derive the conjugacy equations~\eqref{eq:conjeqn_intro}. 
In the third step, we formally expand     $\ord{e}{\eps}  = \ord{e}{0} + \varepsilon \ord{e}{1} + \epsilon^2 \ord{e}{2} + \dotsb, 
    \ord{E}{\eps} = \ord{E}{0} + \varepsilon \ord{E}{1} + \varepsilon^2 \ord{E}{2} + \dotsb, 
    \ord{f}{\eps} = \ord{f}{0} + \varepsilon \ord{f}{1} + \varepsilon^2 \ord{f}{2} + \dotsb$
and substitute these expansions into the conjugacy equations~\eqref{eq:conjeqn_intro}. Collecting terms of equal order in~$\epsilon$ we obtain a sequence of equations for $(\ord{f}{0}, \ord{e}{0}, \ord{E}{0})$, $(\ord{f}{1}, \ord{e}{1}, \ord{E}{1}), \ldots$. 
In the fourth and final step we then show that this sequence of equations can be solved via an iterative procedure. 
This then proves the statement in Theorem~\ref{thm:intro} 
by providing a constructive algorithm to compute~$\ord{f}{\eps}$ (and~$\ord{e}{\eps}$ and~$\ord{E}{\eps}$) up to arbitrary order in~$\epsilon$.
We employ the method to derive the first- and second-order phase reduction of delay-coupled Stuart--Landau oscillators (Section~\ref{sec:sl}), which allows to prove the existence of regions of bistability between phase synchrony and anti-phase synchrony  (Section~\ref{sec:bistab}).

Our mathematical approach to phase reduction for delay-coupled systems also gives perspective to existing work in the physics literature. First, we recover what is known as the \emph{phase shift approximation} $\phi_k(t - \tau_{j,k}) \approx \phi_k(t) - \omega_k \tau_{j,k}$. This approximation, used for example in~\cite{Kori2008,Smirnov2023}, 
replaces delays by phase-shift parameters. Our method produces exactly this approximation in the first-order phase equations, because it  corresponds to the solution to the zeroth-order transport equation. By contrast, delays enter the phase  equations in more complicated ways already at second order. Second, our phase reduction method computes an approximation of the invariant torus as a global object. This allows to deduce statements about the global dynamics, such as connecting orbits between states and basins of attractions. 
Third, the phase equations~\eqref{eq:phase_intro} are always finite-dimensional (as dynamics on a finite-dimensional torus). This results in simpler and qualitatively different phase reduced equations compared to phase equations with delay used for example in~\cite{Izhikevich1998}.   
Further discussion of our approach is given in Section~\ref{sec:discussion}.

\section{Phase reduction for delay-coupled networks}
\label{sec:method}

In this section, we present a systematic approach to phase reduction for general delay-coupled systems of the form
\begin{equation} \label{eq:DDE}
    \dot{x}(t) = F(x(t)) + \epsilon G(X(t, .)), \qquad \text{for}\ x\in \mathbb{R}^n \ \text{and} \ t \geq 0\, .
\end{equation}
Here the state variable is $x$ is an element of $\mathbb{R}^n$, the maximal time delay is $\tau > 0$ and the \emph{history segment} $X(t,\cdot) \in C([-\tau, 0], \mathbb{R}^n)$ is defined as 
\[
X(t,s) = x(t + s), \qquad s \in [-\tau, 0].
\]
Furthermore $G: C([-\tau, 0], \mathbb{R}^n) \to \mathbb{R}^n$ is a differentiable functional and we make the following assumptions on~$\epsilon$ and~$F$: 
\begin{hyp} \label{hyp:system} \hfill
   \begin{itemize}
       \item The function~$F$ is of the form 
       \[
       F(x)  = \left(\comp{F}{1}(x_1),  \ldots, \comp{F}{\torusdim}(x_\torusdim) \right)^\tr
       \] 
       with $
        x \in \mathbb{R}^\statedim : = \mathbb{R}^{\statedim_1} \times \ldots \times \mathbb{R}^{\statedim_\torusdim}, x = \left(x_1,  \ldots, x_\torusdim \right)^\tr
        $.
    \item For each $1 \leq  j \leq \torusdim$,  the ODE 
    \begin{equation} \label{eq:uncoupled_ODE}
        \dot{x}_j = \comp{F}{j}(x_j)
    \end{equation}
    has a periodic orbit $\Gamma_j$ which is \emph{normally hyperbolic}, i.e., it has an algebraically simple trivial Floquet multiplier and no other Floquet multipliers on the unit circle. We denote the period of $\Gamma_j$ by $T_j> 0$ and call $\omega_j = \frac{2 \pi}{T_j}$ the intrinsic frequency of oscillator~$j$. 
     \item The coupling strength $|\varepsilon| \ll 1$ is small. 
   \end{itemize}
\end{hyp}

The network dynamical system~\eqref{eq:IntroNet} is a particular example of an equation of the form~\eqref{eq:DDE}:
With $\tau = \max \{\tau_{j, k} \mid 1 \leq j, k \leq m \}$ the interaction term $G = (G_1, \dotsc, G_m)$ has components
\[
G_j(X(t,\cdot)) = \sum_{k=1}^\torusdim G_{j,k}\left(X_j(t,0), X_k(t,-\tau_{j,k})\right). 
\]

In Section~\ref{subsec:step1}--\ref{subsec:step4}, we prove the main Theorem~\ref{thm:intro} for such systems in four steps, summarizing each step at the beginning of the subsection. Our approach leads to a concrete algorithm to compute Taylor expansion the phase reduced vector field~$\ord{f}{\eps}$ (cf.~\eqref{eq:phase_intro}) together with the embeddings $\ord{e}{\eps}, \ord{E}{\eps}$ (cf.~\eqref{eq:embeddings_intro}); we summarize this algorithm in Section~\ref{subsec:algorithm}. 

\subsection{Step~1: Reformulation of the delay equation} \label{subsec:step1}

\begin{step}[cf.~Lemma~\ref{lem:ode-transport}]
    We re-formulate the delay differential equation
\[
    \dot{x}(t) = F(x(t)) + \epsilon G(X(t, \cdot)) \qquad \text{for}\ x\in \mathbb{R}^n \ \text{and} \ t \geq 0\, 
\]
(cf.~\eqref{eq:DDE}) as an ODE for the state variable $x(t)$ coupled to a transport equation for the history segment $X(t, s)$ 
\begin{align*} 
        \frac{d}{dt} x(t) &= F(x(t)) + \varepsilon G(X(t, 
        \, \cdot)) \\
        \frac{\partial}{\partial t} X(t, s)&= \frac{\partial}{\partial s} X(t, s)
\end{align*}
with $t \geq 0$, $s \in [-\tau, 0]$ and with boundary condition $X(t, 0) = x(t)$.   
\end{step}

\medskip

Observe that a smooth function
\[
X: \mathbb{R}_{\geq 0} \times [-\tau, 0], \qquad (t, s) \mapsto X(t, s)
\]
is of the form 
\[
X(t, s) = x(t +s) 
\]
if and only if it satisfies the transport equation 
\begin{subequations}
\begin{equation*} 
        \frac{\partial}{\partial t} X(t, s)= \frac{\partial}{\partial s} X(t, s)
\end{equation*}
together with the boundary condition 
\begin{equation*} 
   x(t) = X(t, 0). 
\end{equation*}
\end{subequations}
This  implies

\begin{subequations}
\begin{lemma} \label{lem:ode-transport}
    If~$x(t)$ is a smooth solution of the DDE~\eqref{eq:DDE}, then the pair $(x(t), X(t, s)) : = (x(t), x(t+s))$ is a solution of the coupled ODE-transport system
\begin{equation}
\label{eq:coupled_DDE}
\begin{aligned}
        \frac{d}{dt} x(t) &= F(x(t)) + \varepsilon G(X(t, 
        \, \cdot)),\\
        \frac{\partial}{\partial t} X(t, s) &= \frac{\partial}{\partial s} X(t, s)
\end{aligned}
    \tag{ode-tr.eq}
\end{equation}
for $t \geq 0$ and  $s \in [-\tau, 0]$
with boundary condition
\begin{equation} \label{eq:coupled_DDE_bc}
    x(t) = X(t, 0).  \tag{ode-tr.bc}
\end{equation}
Conversely, if $(x(t), X(t, s))$ is a smooth solution of the coupled system~\eqref{eq:coupled_DDE} with boundary condition~\eqref{eq:coupled_DDE_bc}, then $x(t)$ is a solution of the DDE~\eqref{eq:DDE}. 
\end{lemma}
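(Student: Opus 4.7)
The plan is to treat the two directions separately; both are essentially immediate from the method of characteristics for the transport equation, and I do not expect any serious obstacle.

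For the forward direction, suppose $x(t)$ is a smooth solution of the DDE~\eqref{eq:DDE} and define $X(t,s) := x(t+s)$ on $\mathbb{R}_{\geq 0}\times[-\tau,0]$. I would verify the three requirements in turn. The boundary condition $X(t,0) = x(t)$ is immediate from the definition. The transport equation follows from the chain rule: both $\partial_t X(t,s)$ and $\partial_s X(t,s)$ equal $\dot x(t+s)$, so they agree. Finally, since $X(t,\cdot)$ so defined is precisely the history segment appearing in the DDE, the ODE $\dot x(t) = F(x(t)) + \varepsilon G(X(t,\cdot))$ is just a rewriting of~\eqref{eq:DDE}.

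For the converse, suppose $(x(t),X(t,s))$ is a smooth solution of the coupled system~\eqref{eq:coupled_DDE} satisfying~\eqref{eq:coupled_DDE_bc}. The key point is that the transport equation $\partial_t X = \partial_s X$ forces $X$ to be constant along characteristics $t+s=\mathrm{const}$, so there exists a smooth function $\psi$ with $X(t,s) = \psi(t+s)$ on the relevant domain. Evaluating the boundary condition at $s=0$ yields $\psi(t) = X(t,0) = x(t)$, hence $X(t,s) = x(t+s)$, i.e.\ $X(t,\cdot)$ coincides with the history segment of $x$. Substituting this identification back into the ODE in~\eqref{eq:coupled_DDE} recovers the DDE~\eqref{eq:DDE}.

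If I wanted to be careful, the only mildly delicate point is the characteristics argument: for $t \geq 0$ and $s\in[-\tau,0]$, the characteristic through $(t,s)$ with speed $\partial_t - \partial_s = 0$ hits the boundary $\{s=0\}$ at $(t+s,0)$ provided $t+s\geq 0$, which is automatic once $t\geq \tau$; for small $t$ one also needs the characteristic that reaches the initial slice $\{t=0\}$, and there the identification $X(t,s)=x(t+s)$ on the initial history is built into the assumption that the pair is a smooth solution compatible with~\eqref{eq:coupled_DDE_bc}. So the main (and only) nontrivial step is really the one-line integration of the transport equation along characteristics; the rest is bookkeeping.
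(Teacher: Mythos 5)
Your proof is correct and takes the same route as the paper, which states the lemma as an immediate consequence of the observation that a smooth function $X(t,s)$ equals $x(t+s)$ if and only if it satisfies $\partial_t X = \partial_s X$ together with $X(t,0)=x(t)$ --- i.e.\ the method of characteristics you use. Your added remark about small $t$ (the characteristic reaching the initial slice rather than $\{s=0\}$) is a careful point the paper leaves implicit.
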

\end{subequations}

In the above, we have (intentionally) not specified on which state space we consider the coupled ODE-transport system~\eqref{eq:coupled_DDE}; we will discuss this issue in Section~\ref{sec:discussion}.

\subsection{Step~2: Derivation of a conjugacy equation}
\begin{step}[cf.~Lemma~\ref{lem:ConjEq}]
We match the phase reduced dynamics on the invariant torus $\torus{m}$ to the dynamics of the coupled ODE-transport equation~\eqref{eq:coupled_DDE}--\eqref{eq:coupled_DDE_bc}. To do so, 
we parametrize the torus using two maps $\ord{e}{\eps}: \torus{m} \to \mathbb{R}^n$ and $\ord{E}{\eps}: \torus{m} \to C^1([-\tau, 0], \mathbb{R}^n)$; we show that this parametrization sends solutions of the flow on the torus (i.e., of the phase-reduced flow)
\[
\dot{\phi}(t) = \ord{f}{\eps}(\phi(t))
\]
to solutions of the coupled ODE-transport equation~\eqref{eq:coupled_DDE}--\eqref{eq:coupled_DDE_bc} if the \textbf{conjugacy equations}
\begin{align*}
    \partial_\varphi \ord{e}{\eps}(\varphi) \cdot \ord{f}{\eps}(\varphi) &= F(\ord{e}{\eps}(\varphi)) + \epsilon G(\ord{E}{\eps}(\varphi, 
    \, .)) \\
        \partial_\varphi \ord{E}{\eps}(\varphi, s) \cdot \ord{f}{\eps}(\varphi) &= \partial_s \ord{E}{\eps}(\varphi, s)
\end{align*}
(for $\phi \in \torus{m}$ and $s \in [-\tau, 0]$)    and the `boundary condition' $$\ord{e}{\eps}(\phi) = \ord{E}{\eps}(\phi, 0)$$ 
(for $\phi \in \torus{m}$) are satisfied.
\end{step}

As pre-empted in the introduction of this paper, we assume that the coupled ODE-transport system~\eqref{eq:coupled_DDE}--\eqref{eq:coupled_DDE_bc}
possesses a $\torusdim$-dimensional invariant torus, which we parametrize via a smooth map
\begin{equation} \label{eq:embedding}
    \torus{m} \ni \phi \mapsto (\ord{e}{\eps}(\phi), \ord{E}{\eps}(\phi, \, .)) \in \mathbb{R}^n \times C^1([-\tau, 0], \mathbb{R}^n)
\end{equation}
that satisfies the boundary condition
$
\ord{e}{\eps}(\phi) = \ord{E}{\eps}(\phi, 0)$.
(Note that, in particular, the map $(\phi, s) \mapsto (\ord{e}{\eps}(\phi), \ord{E}{\eps}(\phi, s))$ is now~$C^1$.). 
Since the torus is finite dimensional and invariant under the dynamics 
\eqref{eq:coupled_DDE}--\eqref{eq:coupled_DDE_bc}, the dynamics on the torus is conjugate to an ODE of the form 
\begin{equation} \label{eq:reduced_ODE}
    \dot{\varphi}(t) = \ord{f}{\eps}(\varphi(t))
\end{equation}
for some vector field 
\[ 
\ord{f}{\eps}: \torus{\torusdim} \to \mathbb{R}^\torusdim,
\]
and the ODE~\eqref{eq:reduced_ODE} is exactly the phase-reduced equation that we aim to compute. 

To do so, we look for an  embedding~\eqref{eq:embedding} that parametrizes the invariant torus, and sends solutions of the phase reduced equation~\eqref{eq:reduced_ODE} to solutions of the coupled ODE-transport system~\eqref{eq:coupled_DDE}-\eqref{eq:coupled_DDE_bc}. 
The latter means that 
\begin{equation} \label{eq:X_E}
    x(t) := \ord{e}{\eps}(\phi(t)), \qquad X(t, s) := \ord{E}{\eps}(\phi(t), s)
\end{equation}
should be a solution of the coupled transport-ODE system~\eqref{eq:coupled_DDE} whenever~$\phi(t)$ is a solution of the phase equation~\eqref{eq:reduced_ODE}. Differentiating the relation~\eqref{eq:X_E} with respect to~$t$ gives that 
\begin{align*}
    \dot{x}(t) &= \partial_\phi \ord{e}{\eps}(\phi(t)) \cdot \dot{\phi}(t) 
    = \partial_\phi \ord{e}{\eps}(\phi(t)) \cdot \ord{f}{\eps}(\phi(t)), \\
    \partial_t X(t, s) &= \partial_\phi \ord{E}{\eps}(\phi(t), s) \cdot \dot{\phi}(t) 
    = \partial_\phi \ord{E}{\eps}(\phi(t), s) \cdot \ord{f}{\eps}(\phi(t))
\end{align*}
But at the same time, we can compute the quantities $\dot{x}(t)$ and $\partial_t X(t, s)$ from the coupled ODE-transport system~\eqref{eq:coupled_DDE} as
\begin{align*}
    \dot{x}(t) &= F(\ord{e}{\eps}(\phi(t)) + \epsilon G(\ord{E}{\eps}(\varphi(t), 
    \, \cdot)), \\
    \partial_t X(t, s) &= \partial_s \ord{E}{\eps}(\varphi(t), s), 
\end{align*}
and from there we conclude  
\begin{lemma}\label{lem:ConjEq}
    Let 
\begin{align*}
    \ord{e}{\eps}: \torus{\torusdim} &\to \mathbb{R}^n, & \phi &\mapsto \ord{e}{\eps}(\phi) \\
    \ord{E}{\eps}: \torus{\torusdim} &\to C^\infty([-\tau,0], \mathbb{R}^n), & \phi &\mapsto \ord{E}{\eps}(\phi, \, \cdot )
\end{align*}
be smooth maps. Then the mapping  
\[
(x(t), X(t, s)) := (\ord{e}{\eps}(\phi(t)), \ord{E}{\eps}(\phi(t), s))
\]
sends solutions $\phi(t)$ of the phase equation~\eqref{eq:reduced_ODE} to solutions of the coupled ODE-transport sytem $(x(t), X(t, s))$ of~\eqref{eq:coupled_DDE}--\eqref{eq:coupled_DDE_bc} if and only if the \textbf{conjugacy equations}
    \begin{subequations}
    \begin{align} \label{eq:con_full_pt1}
        \partial_\varphi \ord{e}{\eps}(\varphi) \cdot \ord{f}{\eps}(\varphi) &= F(\ord{e}{\eps}(\varphi)) + \epsilon G(\ord{E}{\eps}(\varphi, 
    \, \cdot)) \tag{conj.fin}\\
        \label{eq:con_full_pt2} \tag{conj.tr}
        \partial_\varphi \ord{E}{\eps}(\varphi, s) \cdot \ord{f}{\eps}(\varphi) &= \partial_s \ord{E}{\eps}(\varphi, s)
    \end{align}
are satisfied all $\phi \in \torus{\torusdim}$ and $s \in [-\tau, 0]$ and the `boundary condition'
\begin{equation} \label{eq:con_full_bc}
    \ord{e}{\eps}(\phi) = \ord{E}{\eps}(\phi, 0) \tag{conj.bc}
\end{equation}
\end{subequations}
is satisfied for all $\phi \in \torus{\torusdim}$. 
\end{lemma}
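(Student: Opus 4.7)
The plan is to prove the two directions of the stated equivalence separately, formalizing the chain-rule calculation carried out in the paragraph immediately preceding the lemma.

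For the forward (``if'') direction, I would assume that \eqref{eq:con_full_pt1}, \eqref{eq:con_full_pt2} and \eqref{eq:con_full_bc} hold and that $\phi(t)$ is a solution of the phase equation~\eqref{eq:reduced_ODE}. Setting $x(t) := \ord{e}{\eps}(\phi(t))$ and $X(t,s) := \ord{E}{\eps}(\phi(t), s)$, the chain rule yields $\dot{x}(t) = \partial_\phi \ord{e}{\eps}(\phi(t)) \cdot \ord{f}{\eps}(\phi(t))$, which by \eqref{eq:con_full_pt1} equals $F(x(t)) + \eps G(X(t, \cdot))$. Similarly, $\partial_t X(t,s) = \partial_\phi \ord{E}{\eps}(\phi(t), s) \cdot \ord{f}{\eps}(\phi(t))$ equals $\partial_s \ord{E}{\eps}(\phi(t), s) = \partial_s X(t,s)$ by \eqref{eq:con_full_pt2}, while the boundary condition $x(t) = X(t,0)$ follows directly from \eqref{eq:con_full_bc}. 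Thus $(x(t), X(t,s))$ solves \eqref{eq:coupled_DDE}--\eqref{eq:coupled_DDE_bc}.

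For the reverse (``only if'') direction, I would fix an arbitrary $\phi_0 \in \torus{\torusdim}$ and let $\phi(t)$ be the local solution of~\eqref{eq:reduced_ODE} with $\phi(0) = \phi_0$, which exists by standard ODE theory on the compact manifold~$\torus{\torusdim}$ since~$\ord{f}{\eps}$ is smooth. By assumption, the pair $(x(t), X(t,s)) = (\ord{e}{\eps}(\phi(t)), \ord{E}{\eps}(\phi(t), s))$ solves the coupled system together with its boundary condition. Differentiating this composition in~$t$ via the chain rule gives $\partial_\phi \ord{e}{\eps}(\phi(t)) \cdot \ord{f}{\eps}(\phi(t))$ and $\partial_\phi \ord{E}{\eps}(\phi(t), s) \cdot \ord{f}{\eps}(\phi(t))$, which by assumption must equal $F(x(t)) + \eps G(X(t, \cdot))$ and $\partial_s X(t,s)$ respectively. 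Evaluating at $t = 0$ yields \eqref{eq:con_full_pt1} and \eqref{eq:con_full_pt2} at $(\phi_0, s)$, while the boundary relation~\eqref{eq:con_full_bc} at~$\phi_0$ follows from $x(0) = X(0,0)$. Since $\phi_0 \in \torus{\torusdim}$ and $s \in [-\tau, 0]$ were arbitrary, the conjugacy equations and boundary condition hold identically.

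The proof contains essentially no hard step; it is a direct unpacking of the chain-rule identity, mirroring the computation already displayed before the lemma. The only minor point to verify is that local solvability of~\eqref{eq:reduced_ODE} with arbitrary initial condition is available, which is immediate from smoothness of~$\ord{f}{\eps}$ on the compact torus, so that the ``only if'' direction can access every $\phi_0 \in \torus{\torusdim}$. In this sense the lemma should be viewed as a defining characterization of the phase-reduction map rather than a computational obstacle; the real substance appears later when $\ord{e}{\eps}$, $\ord{E}{\eps}$, and $\ord{f}{\eps}$ are actually constructed order by order from these equations.
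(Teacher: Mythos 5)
Your proposal is correct and follows essentially the same chain-rule argument that the paper presents informally in the paragraphs immediately preceding the lemma. Your ``only if'' direction is slightly more explicit than the paper's informal derivation—you spell out that local existence of solutions through an arbitrary $\phi_0$ lets one evaluate at $t=0$ to recover the conjugacy equations pointwise—but this is a correct and natural formalization of the same idea, not a different approach.
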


Foreshadowing the notation in later sections, we denote by $\ord{f}{0}, \ord{e}{0}$ and $\ord{E}{0}$ functions that solve the conjugacy equations~\eqref{eq:con_full_pt1}--\eqref{eq:con_full_pt2} for $\epsilon = 0$, i.e., they satisfy 
\begin{equation} \label{eq:con_order_0}
    \begin{aligned}
        \partial_\varphi 
        \ord{e}{0}(\phi)\cdot \ord{f}{0}(\phi) &= F(\ord{e}{0}(\varphi)), \\
        \partial_\varphi \ord{E}{0}(\varphi, s) \cdot \ord{f}{0}(\varphi) &= \partial_s \ord{E}{0}(\varphi, s).
    \end{aligned}
\end{equation}
These equations are exactly satisfied when the functions~$\ord{e}{0}$ and~$\ord{E}{0}$ parametrize the unperturbed invariant torus of the uncoupled ODE $\dot{x} = F(x)$, and when they semi-conjugate the dynamics of $\ord{f}{0}$ on $\torus{\torusdim}$  to the dynamics of the ODE $\dot{x} = F(x)$. 
We will call these equations the \textbf{$0$th order homological equations}. 
The next lemma shows that we can choose the parametrization $\ord{e}{0}, \, \ord{E}{0}$ in such a way that the vector field $\ord{f}{0}$ on the unperturbed invariant torus is constant.
This means that the phases on the unperturbed torus rotate uniformly, which is the typical way to assign a phase to isolated oscillators. 
We will always choose this particular solution of the 0th-order equations~\eqref{eq:con_order_0} as the `base case' for the algorithm to compute the Taylor expansion of $\ord{f}{\eps}, \ord{e}{\eps}$ and~$\ord{E}{\eps}$ in Section~\ref{subsec:step4}.%

\begin{lemma} \label{lem:E0f0}
    For $1 \leq j \leq \torusdim$, let $\Gamma_j$ be the periodic orbit of the uncoupled ODE~\eqref{eq:uncoupled_ODE} with period $T_j > 0$ and frequency $\omega_j = \frac{2\pi}{T_j}$ (cf.~Hypothesis~\ref{hyp:system}). 
    Then the functions 
    \begin{equation} \label{eq:E0f0}
    \begin{aligned}
    \ord{e}{0}(\phi) & = \left( \Gamma_1 \left(\frac{\varphi_1}{\omega_1}\right), \ldots, \Gamma_n \left(\frac{\varphi_\torusdim}{\omega_\torusdim} \right) \right) \\
     \ord{E}{0}(\varphi, s) &= \left( \Gamma_1 \left(\frac{\varphi_1}{\omega_1}+s \right), \ldots, \Gamma_\torusdim \left(\frac{\varphi_\torusdim}{\omega_\torusdim}+s \right) \right) \\
     \ord{f}{0}(\varphi) &\equiv \left(\omega_1, \ldots, \omega_\torusdim\right)
    \end{aligned}
    \end{equation}
    solve the $0$th order homological equations~\eqref{eq:con_order_0} together with the relation $\ord{e}{0}(\phi) = \ord{E}{0}(\phi, 0)$. 
\end{lemma}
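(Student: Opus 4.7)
The proof is a direct verification by substitution: I would plug the explicit formulas~\eqref{eq:E0f0} into the $0$th order homological equations~\eqref{eq:con_order_0} and the boundary relation $\ord{e}{0}(\phi) = \ord{E}{0}(\phi,0)$ and check that both sides agree. No fixed point argument or estimate is needed at this stage; everything collapses to chain rule and the defining ODE of the $\Gamma_j$'s.

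First I would confirm that $\ord{e}{0}$ and $\ord{E}{0}$ are well-defined as maps out of $\torus{\torusdim}$. By Hypothesis~\ref{hyp:system} each $\Gamma_j$ is a periodic orbit of $\dot x_j = \comp{F}{j}(x_j)$ with minimal period $T_j = 2\pi/\omega_j$, so $\varphi_j \mapsto \Gamma_j(\varphi_j/\omega_j + s)$ is $2\pi$-periodic in $\varphi_j$ for every fixed $s$, and the formulas descend to $\torus{\torusdim}$. The boundary condition $\ord{e}{0}(\phi) = \ord{E}{0}(\phi, 0)$ is then immediate by setting $s = 0$ in the definition of $\ord{E}{0}$.

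For the homological equations, the key observation is that both $F$ and the proposed embeddings decouple across the $\torusdim$ factors. The Jacobian $\partial_\varphi \ord{e}{0}(\varphi)$ is block-diagonal with $j$th block $\frac{1}{\omega_j}\Gamma_j'(\varphi_j/\omega_j)$, so pairing with the constant vector $\ord{f}{0} = (\omega_1,\dotsc,\omega_\torusdim)$ yields exactly $\Gamma_j'(\varphi_j/\omega_j)$ in the $j$th slot; since $\Gamma_j$ solves $\dot x_j = \comp{F}{j}(x_j)$, this equals $\comp{F}{j}(\Gamma_j(\varphi_j/\omega_j))$, matching $F(\ord{e}{0}(\varphi))$ componentwise by the product structure of $F$. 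The transport-type equation for $\ord{E}{0}$ is handled identically: the $\varphi$-derivative contributes a factor $1/\omega_j$, multiplication by $\omega_j$ cancels it, and both sides reduce to $\Gamma_j'(\varphi_j/\omega_j + s)$ in the $j$th entry.

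There is no genuine obstacle here beyond careful bookkeeping. The lemma holds essentially because, at $\epsilon = 0$, the system is a direct product of single-oscillator ODEs whose periodic orbits are naturally parametrized by uniformly rotating phases with frequencies $\omega_j$; the only things to be attentive to are the chain-rule cancellation of the $\omega_j$ factors and the descent of the formulas to the torus.
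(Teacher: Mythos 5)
Your proposal is correct and follows essentially the same route as the paper: direct substitution, using the chain rule to cancel the $\omega_j$ factors against the entries of $\ord{f}{0}$, and then invoking that each $\Gamma_j$ solves $\dot{x}_j = \comp{F}{j}(x_j)$ together with the product structure of $F$ from Hypothesis~\ref{hyp:system}. The only addition you make beyond the paper's proof is the (harmless and sensible) remark that $T_j$-periodicity of $\Gamma_j$ ensures the formulas descend to $\torus{\torusdim}$.
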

\begin{proof}
The definitions of $\ord{e}{0}$ and $\ord{E}{0}$ in~\eqref{eq:E0f0} directly imply that $\ord{e}{0}(\phi) = \ord{E}{0}(\phi, 0)$. 

We next verify that the quantities $\ord{e}{0}$, $\ord{E}{0}$ and $\ord{f}{0}$ defined in~\eqref{eq:E0f0} satisfy the second equality in~\eqref{eq:con_order_0}. To do so, we compute that 
\begin{equation} \label{eq:E0_der}
    \begin{aligned}
        \partial_\varphi \ord{E}{0}(\varphi, s) \cdot \ord{f}{0}(\varphi) &= \left( \frac{1}{\omega_1}\dot{\Gamma}_1\left(\frac{\varphi_1}{\omega_1} +s \right), \ldots, \frac{1}{\omega_n}\dot{\Gamma}_\torusdim\left(\frac{\varphi_\torusdim}{\omega_\torusdim} + s \right) \right) \cdot \left(\omega_1, \ldots, \omega_\torusdim\right) \\
        &=   \left( \dot{\Gamma}_1\left(\frac{\varphi_1}{\omega_1} + s \right), \ldots, \dot{\Gamma}_n\left(\frac{\varphi_\torusdim}{\omega_\torusdim} + s\right) \right).
    \end{aligned}
\end{equation}
But simultaneously it holds that 
\begin{align*}
    \partial_s \ord{E}{0}(\varphi, s) = \left( \dot{\Gamma}_1\left(\frac{\varphi_1}{\omega_1} + s \right), \ldots, \dot{\Gamma}_\torusdim\left(\frac{\varphi_\torusdim}{\omega_\torusdim} + s\right) \right)
\end{align*}
and hence $\partial_\varphi \ord{E}{0}(\varphi, s) \cdot \ord{f}{0}(\varphi) = \partial_s \ord{E}{0}(\varphi, s)$, i.e., the second equality in~\eqref{eq:con_order_0} is satisfied. 

We finally verify that the quantities $\ord{e}{0}$, $\ord{E}{0}$ and $\ord{f}{0}$ defined in~\eqref{eq:E0f0} satisfy the first equality in~\eqref{eq:con_order_0}. Since~$\Gamma_j$ is a solution of the uncoupled ODE~\eqref{eq:uncoupled_ODE}, it holds that 
\[ 
\comp{F}{j}(\Gamma_j(t)) = \dot{\Gamma}_j(t),
\]
or, with $t= \frac{\varphi_j}{\omega_j}$, 
\[
\comp{F}{j}\left( \Gamma_j\left(\frac{\varphi_j}{\omega_j}\right) \right) = \dot{\Gamma}_j\left(\frac{\varphi_j}{\omega_j}\right).
\]
Since by assumption~$F(x)$ is of the form $F(x_1, \ldots, x_m) = (F_1(x_1), \ldots, F_m(x_m))$ (cf.~Hypothesis~\ref{hyp:system}), this implies that
\begin{align*}
    F(\ord{e}{0}(\varphi)) &= \left( \comp{F}{1} \left(\Gamma_1\left(\frac{\varphi_1}{\omega_1}\right) \right), \ldots, \comp{F}{m} \left(\Gamma_\torusdim\left(\frac{\varphi_\torusdim}{\omega_\torusdim}\right) \right) \right) \\
    &=  \left( \dot{\Gamma}_1\left(\frac{\varphi_1}{\omega_1}\right), \ldots, \dot{\Gamma}_\torusdim\left(\frac{\varphi_\torusdim}{\omega_\torusdim}\right) \right).
\end{align*}
The last expression is equal to $\partial_\varphi \ord{E}{0}(\varphi, 0) \cdot \ord{f}{0}(\varphi)$ (cf.~\eqref{eq:E0_der}) and hence it follows that $\partial_\varphi \ord{E}{0}(\varphi, 0) \cdot \ord{f}{0}(\varphi) = F(\ord{e}{0}(\varphi))$. But we had already observed that $\ord{E}{0}(\phi, s) = \ord{e}{0}(\phi)$, and hence we conclude that $\partial_\varphi \ord{e}{0}(\varphi) \cdot \ord{f}{0}(\varphi) = F(\ord{e}{0}(\varphi))$, as claimed. 
\end{proof}

\subsection{Step~3: Derivation of order-by-order homological equations} \label{sec:iterative_con}

\begin{step}[cf.~Lemma~\ref{lem:con_order_j}]
We formally expand the functions~$\ord{e}{\eps}, \ord{E}{\eps}$ and~$\ord{f}{\eps}$ in the small parameter~$\epsilon$ as
\begin{equation*} 
\begin{aligned}
    \ord{f}{\eps} &= \ord{f}{0} + \varepsilon \ord{f}{1} +\dotsb + \eps^\el \ord{f}{\el} +\dotsb \\    \ord{e}{\eps} & = \ord{e}{0} + \varepsilon \ord{e}{1} +\dotsb + \epsilon^\el \ord{e}{\el} +\dotsb \\
    \ord{E}{\eps} &= \ord{E}{0} + \varepsilon \ord{E}{1} +\dotsb + \eps^\el \ord{E}{\el} +\dotsb
\end{aligned}
\end{equation*}
and show that the conjugacy equations~\eqref{eq:con_full_pt1}--\eqref{eq:con_full_pt2} are satisfied up to order $\El$ if and only if for each $1 \leq \el \leq \El$, the functions $\ord{f}{\el}, \ord{e}{\el}$ and $\ord{E}{\el}$ solve the \textbf{$\el$th order homological equations}
\begin{align*}
    \partial_{\omega} \ord{e}{\el}(\varphi) - F'(\ord{e}{0}(\varphi)) \cdot \ord{e}{\el}(\varphi) + \partial_\varphi \ord{e}{0}(\varphi) \cdot \ord{f}{\el}(\varphi) &= \ord{\eta}{\el}(\varphi)   \\ 
       \partial_{\omega} \ord{E}{\el}(\varphi, s) - \partial_s \ord{E}{\el}(\varphi, s)  + \partial_\varphi \ord{E}{0}(\varphi, s) \cdot \ord{f}{\el}(\varphi) &= \ord{H}{\el}(s, \varphi), 
\end{align*}
Here, $\omega := (\omega_1, \ldots, \omega_n)$ is the vector consisting of the frequencies of the periodic orbits in the uncoupled system, and we use the notations $
\partial_\omega \ord{e}{\el}(\varphi): = \partial_\phi \ord{e}{\el}(\phi) \cdot \omega$  and $\partial_\omega \ord{E}{\el}(\varphi, s) := \partial_\phi \ord{E}{\el}(\varphi, s) \cdot \omega$.
The functions~$\ord{\eta}{\el}, \ord{H}{\el}$ importantly only depend on `lower order information,' i.e., functions $\ord{f}{\el'}, \ord{e}{\el'}$ and $\ord{E}{\el'}$ with $\el' < \el$, and hence the right hand side of the $\el$th homological equation can iteratively be computed once the lower order equations have been solved.
\end{step}

\begin{lemma} \label{lem:con_order_j}
Let 
\begin{align*}
    \ord{f}{\eps}: \torus{\torusdim} &\to \mathbb{R}^n, & \phi &\mapsto \ord{f}{\eps}(\phi) \\
    \ord{e}{\eps}: \torus{\torusdim} &\to \mathbb{R}^n, & \phi &\mapsto \ord{e}{\eps}(\phi) \\
    \ord{E}{\eps}: \torus{\torusdim} &\to C^\infty([-\tau, 0], \mathbb{R}^n), & \phi &\mapsto \ord{E}{\eps}(\phi, \, \cdot)
\end{align*}
be smooth maps that depend smoothly on $\epsilon$ and that satisfy the conjugacy equations~\eqref{eq:con_full_pt1} together with the boundary relation~\eqref{eq:con_full_bc}. We formally expand the maps $\ord{f}{\eps}, \ord{e}{\eps}, \ord{E}{\eps}$ in the small parameter $\epsilon$ as \begin{equation} \label{eq:expansion_E_f}
\begin{aligned}
    \ord{f}{\eps} &= \ord{f}{0} + \varepsilon \ord{f}{1} +\dotsb + \eps^\el \ord{f}{\el} +\dotsb \\
    \ord{e}{\eps} & = \ord{e}{0} + \varepsilon \ord{e}{1} +\dotsb + \eps^\el \ord{e}{\el} +\dotsb \\
    \ord{E}{\eps} &= \ord{E}{0} + \varepsilon \ord{E}{1} +\dotsb + \eps^\el \ord{E}{\el} +\dotsb    
\end{aligned}
\end{equation}
and denote by 
\begin{equation}
    \label{eq:omega}
\omega := (\omega_1, \ldots, \omega_\torusdim)
\end{equation}
the vector whose entries are the frequencies of the periodic orbits in the uncoupled system. 

Then the maps $\ord{e}{0}, \ord{E}{0}, \ord{f}{0}$ satisfy 
the $0$th order conjugacy equations~\eqref{eq:con_order_0}. 
Suppose we solve the $0$th order equations in such a way that $\ord{f}{0}(\phi) \equiv \omega$ (cf.~Lemma~\ref{lem:E0f0}). Then the conjugacy equation~\eqref{eq:con_full_pt1}--\eqref{eq:con_full_pt2} is satisfied up to order $\El \in \mathbb{N}$ if and only if the maps $\ord{e}{\el}, \ord{E}{\el}, \ord{f}{\el}$ for $1 \leq \el \leq \El$ satisfy the \textbf{$\el$th order homological equations}
\begin{subequations}
    \begin{align} \label{eq:con_order_j_n1}
        \partial_{\omega} \ord{e}{\el}(\varphi) - F'(\ord{e}{0}(\varphi)) \cdot \ord{e}{\el}(\varphi) + \partial_\varphi \ord{e}{0}(\varphi) \cdot \ord{f}{\el}(\varphi) &= \ord{\eta}{\el}(\varphi)  \tag{hom.fin} \\ 
       \partial_{\omega} \ord{E}{\el}(\varphi, s) - \partial_s \ord{E}{\el}(\varphi, s)  + \partial_\varphi \ord{E}{0}(\varphi, s) \cdot \ord{f}{\el}(\varphi) &= \ord{H}{\el}(s, \varphi), \tag{hom.tr}
       \label{eq:con_order_j_n2}
    \end{align}
\end{subequations}
for all $\phi \in \mathbb{T}^\torusdim$ and $s \in [-\tau, 0]$. 
Here, the inhomogeneous right hand side $\ord{\eta}{\el}$ depends only on $\ord{f}{0}, \ldots, \ord{f}{\el-1}$, $\ord{e}{0}, \ldots, \ord{e}{\el-1}$ and $\ord{E}{0}, \ldots, \ord{E}{\el-1}$; the term $\ord{H}{\el}$ depends only on  $\ord{f}{0}, \ldots, \ord{f}{\el-1}$ and $\ord{E}{0}, \ldots, \ord{E}{\el-1}$; 
and we have used the notation 
\[
\partial_\omega \ord{e}{\el}(\varphi): = \partial_\phi \ord{e}{\el}(\phi) \cdot \omega, \qquad \partial_\omega \ord{E}{\el}(\varphi, s) := \partial_\phi \ord{E}{\el}(\varphi, s) \cdot \omega .
\]
\end{lemma}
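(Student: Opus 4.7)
The plan is to substitute the expansions~\eqref{eq:expansion_E_f} into the conjugacy equations~\eqref{eq:con_full_pt1}--\eqref{eq:con_full_pt2} and to match coefficients of $\epsilon$ on the two sides. Since two smooth $\epsilon$-families agree to order $\El$ if and only if their $\epsilon^\el$-Taylor coefficients agree for each $0 \leq \el \leq \El$, the ``if and only if'' in the statement reduces to identifying, at each order $\el$, the $\epsilon^\el$-coefficient of the residual of~\eqref{eq:con_full_pt1}--\eqref{eq:con_full_pt2} with the left-minus-right of~\eqref{eq:con_order_j_n1}--\eqref{eq:con_order_j_n2}. The $\el = 0$ case recovers the zeroth-order equations~\eqref{eq:con_order_0}, solved via Lemma~\ref{lem:E0f0} with $\ord{f}{0} \equiv \omega$.

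For the finite-dimensional conjugacy equation~\eqref{eq:con_full_pt1}, the left-hand side $\partial_\varphi \ord{e}{\eps}(\varphi) \cdot \ord{f}{\eps}(\varphi)$ expands as a Cauchy product whose $\epsilon^\el$-coefficient is
\[
\partial_\varphi \ord{e}{\el}(\varphi) \cdot \ord{f}{0}(\varphi) + \partial_\varphi \ord{e}{0}(\varphi) \cdot \ord{f}{\el}(\varphi) + \sum_{\el' = 1}^{\el-1} \partial_\varphi \ord{e}{\el'}(\varphi) \cdot \ord{f}{\el - \el'}(\varphi).
\]
Using $\ord{f}{0} \equiv \omega$, the first term becomes exactly $\partial_\omega \ord{e}{\el}(\varphi)$, and the tail sum depends only on $\ord{e}{\el'}, \ord{f}{\el'}$ with $\el' < \el$. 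On the right-hand side, a Fa\`a di Bruno expansion of $F(\ord{e}{\eps}(\varphi))$ around $\ord{e}{0}(\varphi)$ contributes, at order $\el$, the linear-in-$\ord{e}{\el}$ piece $F'(\ord{e}{0}(\varphi)) \cdot \ord{e}{\el}(\varphi)$ together with multilinear expressions in $\ord{e}{1}, \dotsc, \ord{e}{\el-1}$. The extra factor of $\epsilon$ in $\epsilon G(\ord{E}{\eps}(\varphi,\cdot))$ shifts its expansion by one, so its $\epsilon^\el$-coefficient is a functional of $\ord{E}{0}, \dotsc, \ord{E}{\el-1}$ only. Moving $F'(\ord{e}{0})\cdot \ord{e}{\el}$ to the left and bundling the remaining lower-order contributions into $\ord{\eta}{\el}(\varphi)$ yields~\eqref{eq:con_order_j_n1}.

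The transport conjugacy~\eqref{eq:con_full_pt2} is treated identically but is easier: the right-hand side $\partial_s \ord{E}{\eps}(\varphi,s)$ is linear in the expansion, so no Taylor series of a nonlinearity intervenes and the $\epsilon^\el$-coefficient of the right-hand side is simply $\partial_s \ord{E}{\el}(\varphi,s)$. The Cauchy product on the left, again invoking $\ord{f}{0} \equiv \omega$, produces the three leading-order terms in~\eqref{eq:con_order_j_n2} plus a tail over $1 \leq \el' \leq \el-1$, which one defines to be $\ord{H}{\el}(s,\varphi)$.

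The main obstacle is purely bookkeeping: verifying that $\ord{\eta}{\el}$ and $\ord{H}{\el}$ depend exclusively on $\ord{e}{\el'}, \ord{E}{\el'}, \ord{f}{\el'}$ with $\el' < \el$. For the Cauchy products this follows by inspecting which index pairs $(\el', \el-\el')$ can carry a factor with index $\el$, and pulling those pairs out explicitly; for the $F$-expansion it follows since any multilinear term of total order $\el$ with two or more factors forces each factor index to be at most $\el - 1$; for the $G$-expansion the $\epsilon$-prefactor is precisely what keeps $\ord{E}{\el}$ out of $\ord{\eta}{\el}$. No analytic input beyond the assumed smoothness is needed, so the lemma is essentially an algebraic identity between formal power series.
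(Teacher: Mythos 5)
Your proof is correct and takes essentially the same route as the paper: substitute the formal $\epsilon$-expansions, expand the bilinear left-hand sides as Cauchy products, apply Fa\`a di Bruno to $F(\ord{e}{\eps})$, observe that the prefactor $\epsilon$ on $G$ shifts its contribution by one order (so $\ord{\eta}{\el}$ never sees $\ord{E}{\el}$), and then isolate the order-$\el$ unknowns on the left while bundling the remaining lower-order terms into $\ord{\eta}{\el}$ and $\ord{H}{\el}$. Your preliminary remark reducing the ``up to order $\El$'' statement to coefficient-by-coefficient equality makes the if-and-only-if logic more explicit than the paper does, but the argument is otherwise the same.
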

\begin{proof} We first derive the first equality in~\eqref{eq:con_order_0} together with the equality~\eqref{eq:con_order_j_n1} from the conjugacy equation~\eqref{eq:con_full_pt1}. 
Substituting the expansion~\eqref{eq:expansion_E_f} in the left hand side of~\eqref{eq:con_full_pt1} yields
\begin{equation} \label{eq:expansion_lhs}
(\partial_\varphi \ord{e}{0}(\phi) +\dotsb +  \varepsilon^{\el} \partial_\varphi\ord{e}{\el}(\phi) +\dotsb  ) \cdot (\ord{f}{0}(\varphi) +\dotsb + \varepsilon^{\el} \ord{f}{\el}(\varphi) +\dotsb )    
\end{equation}
of which the constant term in~$\epsilon$ is given by 
\[ 
\partial_\varphi \ord{e}{0}(\phi) \cdot f_0(\varphi)
\]
and for $\el \geq 1$, the term of order $\epsilon^\el$ is given by 
\[
\partial_\phi \ord{e}{0}(\phi) \cdot \ord{f}{\el}(\phi) + \partial_\phi \ord{e}{\el}(\phi) \cdot \ord{f}{0}(\phi) +  \sum_{\el'=1}^{\el-1} \partial_\phi \ord{e}{\el'}(\phi) \cdot \ord{f}{\el-\el'}(\phi).
\]
Substituting the expansion~\eqref{eq:expansion_E_f} in the right hand side of~\eqref{eq:con_full_pt1} yields 
\begin{equation} \label{eq:expansion_rhs}
    F\left(\ord{e}{0}(\phi) + \varepsilon \ord{e}{1}(\phi) +\dotsb \right) + \epsilon G\left(\ord{E}{0}(\phi, \, .) + \epsilon\ord{E}{1}(\phi, \, .) +\dotsb \right). 
\end{equation}
The  constant term in $\varepsilon$ in~\eqref{eq:expansion_rhs} is given by 
\[
F(\ord{e}{0}(\phi)),
\]
and equating this to the constant term in~\eqref{eq:expansion_lhs} gives the first equality in~\eqref{eq:con_order_0}. 
For $\el \geq 1$, the term of order $\epsilon^\el$ is given by 
\begin{equation} \label{eq:expansion_bell}
    \begin{aligned}
        F'(\ord{e}{0}(\phi)) \ord{e}{\el}(\phi) + \sum_{\el' = 2}^\el B_{j, k} \left[D^{(\el')} F(\ord{e}{0}(\phi)), \ord{e}{1}(\phi), \ldots, \ord{e}{\el-\el'+1}(\phi) \right]  \\ + \sum_{\el' = 1}^{\el-1}C_{\el, \el'}\left[D^{(\el')} G(\ord{E}{0}(\phi, \, .)), \ord{E}{0}(\phi, \, .), \ldots, \ord{E}{\el-\el'}(\phi, \, .) \right].
    \end{aligned}
\end{equation}
Here the functions~$B_{\el, \el'}$ and~$C_{\el, \el'}$ represent combinatorial expressions that can be explicitly derived from the Faa di Bruno formula~\cite{Constantine1996};
we refrain from stating the explicit expression here, but only point out that they do \emph{not} depend on $\ord{e}{\el'}, \, \ord{E}{\el'}$ for $\el' \geq \el$. We equate~\eqref{eq:expansion_bell} to the term of order $\epsilon^\el$ in~\eqref{eq:expansion_lhs} and write the resulting equality as
\begin{align*}
    \partial_\varphi \ord{e}{\el}(\varphi)\cdot \ord{f}{0}(\varphi) - F'(\ord{e}{0}(\phi)) \cdot \ord{e}{\el}(\varphi) + \partial_\varphi \ord{e}{0}(\varphi) \cdot \ord{f}{\el}(\varphi) = \ord{\eta}{\el}(\phi)
\end{align*}
where $\ord{\eta}{\el}$ depends on $\ord{f}{0}, \ldots, \ord{f}{\el-1}$, $\ord{e}{0}, \ldots, \ord{e}{\el-1}$ and $\ord{E}{0}, \ldots, \ord{E}{\el-1}$; substituting the solution $\ord{f}{0}(\phi) \equiv \omega$ obtained in Lemma~\ref{lem:E0f0} then yields~\eqref{eq:con_order_j_n1}.

\medskip

We next derive the second equality in~\eqref{eq:con_order_0} and the equality~\eqref{eq:con_order_j_n2} from the conjugacy equation~\eqref{eq:con_full_pt2}. Substituting the expansion~\eqref{eq:expansion_E_f} in the left hand side of~\eqref{eq:con_full_pt2} gives
\begin{equation} \label{eq:expansion_lhs_pde}
    (\partial_\varphi \ord{E}{0}(\varphi, s) +\dotsb +  \varepsilon^{\el} \partial_\varphi \ord{E}{\el}(\varphi, s) +\dotsb  ) \cdot (\ord{f}{0}(\varphi) +\dotsb + \varepsilon^{\el} \ord{f}{\el}(\varphi) +\dotsb ) 
\end{equation}
of which the constant term is given by 
\[
\partial_\varphi \ord{E}{0}(\varphi, s) \cdot \ord{f}{0}(\phi);
\]
and equating this to the constant term in $\epsilon$ in the expression $\partial_s E(\phi, s) = \partial_s \ord{E}{0}(\phi, s) + \epsilon \ord{E}{1}(\phi, s) +\dotsb$ gives the second equality in~\eqref{eq:con_order_0}. For $\el \geq 1$, the term of order $\eps^\el$ in~\eqref{eq:expansion_lhs_pde} is given by 
\[
\partial_\phi \ord{E}{0}(\phi, s) \cdot \ord{f}{\el}(\phi) + \partial_\phi \ord{E}{\el}(\phi, s) \cdot \ord{f}{0}(\phi) +  \sum_{k = 1}^{j-1} \partial_\phi \ord{E}{k}(\phi, s) \cdot \ord{f}{\el-k}(\phi). 
\]
We equate this to the term of order $\eps^\el$ in the expression $\partial_s E(\phi, s) = \partial_s \ord{E}{0}(\phi, s) + \epsilon \ord{E}{1}(
\phi, s) +\dotsb$
and write the resulting equality as 
\[
\partial_\varphi \ord{E}{\el}(\varphi, s) \cdot \ord{f}{0}(\varphi) - \partial_s \ord{E}{\el}(\varphi, s)  + \partial_\varphi \ord{E}{0}(\varphi, s) \cdot \ord{f}{\el}(\varphi) = \ord{H}{\el}(\phi, s),
\]
where the value of~$\ord{H}{\el}$ depends on $\ord{f}{0}, \dotsc, \ord{f}{\el-1}$ and on $\ord{E}{0}, \dotsc, \ord{E}{\el-1}$. Substituting~$\ord{f}{0}(\phi) \equiv \omega$ (Lemma~\ref{lem:E0f0}) then yields~\eqref{eq:con_order_j_n2}.
\end{proof}

\begin{remark} \label{rmk:rhs_explicit}
The functions $\ord{\eta}{1}, \ord{\eta}{2}$ are explicitly given by
\begin{equation} \label{eq:G1G2_expl}
    \begin{aligned}
        \ord{\eta}{1}(\phi) &= G(\ord{E}{0}(\phi, \, . )) \\
        \ord{\eta}{2}(\phi) &= \frac{1}{2} F''(\ord{e}{0}(\phi))[\ord{e}{1}(\phi), \ord{e}{1}(\phi)] -  \partial_\phi \ord{e}{1}(\phi) \cdot \ord{f}{1}(\phi) \\ 
        & \qquad + G'(\ord{E}{0}(\phi, \, .))\ord{E}{1}(\phi, \, .) 
    \end{aligned}
\end{equation}
and the functions $H_1, H_2$ are explicitly given by 
\begin{equation} \label{eq:H1H2_expl}
    \begin{aligned}
        \ord{H}{1}(\phi, s) & \equiv 0 \\
        \ord{H}{2}(\phi, s) &= - \partial_{\phi} \ord{E}{1}(\phi, s) \cdot \ord{f}{1}(\phi). 
    \end{aligned}
\end{equation}
\end{remark}

\subsection{Step~4: Solving the homological equations} \label{subsec:step4}

\begin{step}
    The fourth step is to show that the $\el$th order homological equations~\eqref{eq:con_order_j_n1}--\eqref{eq:con_order_j_n2} derived in Step~3 are solvable for each $\el \in \mathbb{N}$. We proceed as follows.
        
        First (cf.~Lemma~\ref{lem:N_L} and Lemma~\ref{lem:con_order_splitted}), we write the embedding function $\ord{e}{\eps}: \torus{\torusdim} \to \mathbb{R}^n$ of the invariant torus 
        in a suitable coordinate frame based on the unperturbed torus.
        Define the map $\Tg: \torus{m} \to \mathbb{R}^{m \times m}, \phi\mapsto\partial_\phi \ord{e}{0}(\phi)$, and note that, for each $\phi \in \torus{m}$, the columns of~$\Tg(\phi)$ form a basis for the tangent space to the unperturbed torus at the point $\ord{e}{0}(\phi)$.
        Next, we construct (cf.~Lemma~\ref{lem:N_L}) a matrix valued function $N: \torus{m} \to \mathbb{R}^{(n-m) \times m}$ whose columns span a certain normal space to the unperturbed torus. 
        This allows to write the $\el$th order Taylor coefficient~$\ord{e}{\el}$ of the embedding function~$\ord{e}{\eps}$ as a direct sum
        \[
        \ord{e}{\el}(\phi) = \Tg(\phi) \ord{g}{\el}(\phi) + N(\phi) \ord{h}{\el}(\phi),
        \]
        where the function~$\ord{g}{\el}$ determines the component of~$\ord{e}{\el}$ tangential to the unperturbed invariant torus and the function~$\ord{h}{\el}$ determines the component of~$\ord{e}{\el}$ normal to it.
        We show (Lemma~\ref{lem:con_order_splitted}) that this coordinate transformation splits~\eqref{eq:con_order_j_n1} into tangential and normal directions leading to the \textbf{split $\el$th order homological equations}       
        \begin{align*}
        \Tg(\phi) [\partial_{\omega} \ord{g}{\el}(\phi)  + \ord{f}{\el}(\phi)] &= \pi(\phi) \ord{\eta}{\el}(\phi) \\
         N(\phi)[\partial_{\omega} \ord{h}{\el}(\phi) - L \ord{h}{\el}(\phi)] &= (I-\pi(\phi))[\ord{\eta}{\el}(\phi)],
        \end{align*}
        where~$\pi(\phi)$ denotes the projection on the tangent space of the invariant torus along the normal space and $I$~the identity.

    Second (cf.~Lemmas~\ref{lem:SolTang} and~\ref{lem:SolNorm}), we prove that the split $\el$th order homological equations are solvable for $\ord{g}{\el}$, $\ord{h}{\el}$ and $\ord{f}{\el}$, and provide explicit equations for the solutions in terms of Fourier coefficients. Using again the coordinate transformation $\ord{e}{\el}(\phi) = \Tg(\phi) \ord{g}{\el}(\phi) + N(\phi) \ord{h}{\el}(\phi)$, we obtain solutions~$\ord{e}{\el}$ and~$\ord{f}{\el}$ for the $\el$th order homological equation~\eqref{eq:con_order_j_n1} from Step~3.

        Third (cf.~Lemma~\ref{lem:TranspEq}), we show that the $\el$th order homological equation~\eqref{eq:con_order_j_n2} from Step~3, which corresponds to the transport part of the homological equations, is solvable. 
\end{step}

Together, this leads to a concrete algorithm to solve the $\el$th~order homological equations~\eqref{eq:con_order_j_n1}--\eqref{eq:con_order_j_n2} order by order, and thereby obtain an approximation of the vector field $\ord{f}{\eps}$ 
 and the  embeddings  $\ord{e}{\eps}$ and $\ord{E}{\eps}$ up to arbitrary accuracy in~$\epsilon$. 
We summarize this algorithm in Section~\ref{subsec:algorithm}.

\subsubsection{Suitable coordinates}

The function $\ord{e}{0}(\phi): \torus{m} \to \mathbb{R}^n$ defined in~\eqref{eq:E0f0} parametrizes the unperturbed invariant torus in the uncoupled system $\dot{x} = F(x)$; consequently, 
given a $\phi \in \torus{m}$, the columns of the matrix~$\Tg(\phi) = \partial_\phi \ord{e}{0}(\phi)$ form a basis for the tangent space to the unperturbed torus at the point $\ord{e}{0}(\phi)$.
The next lemma constructs a matrix-valued map $\torus{m} \ni \phi \mapsto N(\phi)$ such that, for each $\phi \in \torus{m}$, the vectors of $N(\phi)$ span a certain \emph{normal} space to the unperturbed invariant torus. Together, $T$ and $N$ define a coordinate frame that is particularly suitable to study the $\el$th order homological equation~\eqref{eq:con_order_j_n1} in Lemma~\ref{lem:con_order_splitted}. 

The proof of the lemma below is based on Floquet theory; here, we give an outline of this proof, but refer to~\cite{VonderGracht2023a} for a complete proof.  

\begin{lemma} \label{lem:N_L}
Let $F: \mathbb{R}^n \to \mathbb{R}^n$ be the vector field of the uncoupled system defined in Hypothesis~\ref{hyp:system}. Let $\omega = (\omega_1, \ldots, \omega_\torusdim)$ be the vector consisting of the frequencies of the periodic orbits in the uncoupled system (cf.~\eqref{eq:omega}) and let
$\ord{e}{0}: \torus{\torusdim} \to \mathbb{R}^n$ be the embedding of the unperturbed invariant torus in the uncoupled system defined in~\eqref{eq:E0f0}. 
Define
\[
\Tg: \torus{m} \to \R^{m \times m}, \qquad T(\phi) := \partial_\phi \ord{e}{0}(\phi)\, .
\]
For each $\phi \in \torus{m}$, the columns of $\Tg(\phi)$ form a basis for the tangent space to unperturbed torus at the point~$\ord{e}{0}(\phi)$.

Then there exists a smooth matrix-valued map
\[
\begin{aligned}
    N: \torus{\torusdim} &\to \mathcal{L}(\mathbb{R}^{\statedim-\torusdim}, \mathbb{R}^\statedim) , \qquad \phi \mapsto N(\phi)
\end{aligned}
\]
together with a hyperbolic matrix
\[
L \in \mathbb{R}^{(\statedim - \torusdim) \times (\statedim - \torusdim)}
\]
such that the following two statements hold: 
\medskip
\begin{enumerate}
    \item For each $\phi \in \torus{\torusdim}$, the image of the matrix $N(\phi): \mathbb{R}^{n -m} \to \mathbb{R}^n$ is transverse to the image of the matrix $\Tg(\phi): \mathbb{R}^m \to \mathbb{R}^n$, i.e.,
    \begin{equation} \label{eq:decomposition_e_N}
    \im \Tg(\phi)   \oplus \im N(\varphi) = \mathbb{R}^{\statedim}; 
    \end{equation}
    \item The map $\phi \mapsto N(\phi)$ satisfies the differential equation
  \begin{equation} \label{eq:pde_N}
        \partial_{\omega} N(\phi) + N(\phi) L = F'(\ord{e}{0}(\phi)) N(\phi).
  \end{equation}
    
\end{enumerate}
\end{lemma}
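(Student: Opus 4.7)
The plan is to exploit the product structure of the uncoupled system to reduce the construction to each individual oscillator, and then apply Floquet theory on the variational equation along each periodic orbit.

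First, since Hypothesis~\ref{hyp:system} gives $F(x) = (F_1(x_1), \dotsc, F_m(x_m))^\tr$ and the unperturbed invariant torus is the product of the $\Gamma_j$, it is natural to seek $N$ and $L$ in block-diagonal form
\[
N(\phi) = \mathrm{diag}\!\left(N_1(\phi_1/\omega_1), \dotsc, N_m(\phi_m/\omega_m)\right), \qquad L = \mathrm{diag}(L_1, \dotsc, L_m),
\]
where $N_j : \mathbb{R} \to \mathcal{L}(\mathbb{R}^{n_j - 1}, \mathbb{R}^{n_j})$ is $T_j$-periodic and $L_j \in \mathbb{R}^{(n_j - 1)\times(n_j -1)}$ is hyperbolic. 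Under this ansatz, using $\partial_\omega = \sum_j \omega_j \partial_{\phi_j}$ and the chain rule, the PDE~\eqref{eq:pde_N} decouples into ODEs on each factor,
\[
\dot N_j(t) + N_j(t) L_j = F_j'(\Gamma_j(t))\, N_j(t), \qquad t \in \mathbb{R},
\]
and the transversality condition reduces to $\mathrm{span}\{\dot\Gamma_j(t)\} \oplus \im N_j(t) = \mathbb{R}^{n_j}$ for every $t$.

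Second, I would construct each $(N_j, L_j)$ from classical Floquet theory applied to the variational equation $\dot{y} = F_j'(\Gamma_j(t)) y$. Let $\Phi_j(t)$ denote its fundamental matrix with $\Phi_j(0) = I$, and let $M_j = \Phi_j(T_j)$ be the monodromy matrix. By normal hyperbolicity, $M_j$ admits the algebraically simple eigenvalue $1$ with eigenvector $\dot\Gamma_j(0)$, and all remaining eigenvalues lie off the unit circle. Let $W_j \subset \mathbb{R}^{n_j}$ be an $M_j$-invariant complement to $\mathrm{span}\{\dot\Gamma_j(0)\}$, and let $\tilde M_j \in \mathcal{L}(W_j)$ denote the restriction of $M_j$ to $W_j$; by normal hyperbolicity $\tilde M_j$ has no eigenvalues on the unit circle. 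Choose a (possibly real-logarithmic, if necessary after passing to a suitable framing of the normal bundle as in~\cite{VonderGracht2023a}) $L_j$ with $e^{T_j L_j} = \tilde M_j^{-1}$; then $L_j$ has no eigenvalues on the imaginary axis, i.e.~it is hyperbolic. Define $N_j(t) := \Phi_j(t)\big|_{W_j} \cdot e^{L_j t}$, which is $T_j$-periodic by the defining property of $L_j$.

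Third, differentiation gives $\dot N_j(t) = F_j'(\Gamma_j(t)) N_j(t) - N_j(t) L_j$, which is precisely the required identity; and $\im N_j(t)$ is the image under $\Phi_j(t)$ of the complement $W_j$, which remains transverse to the tangent direction $\dot\Gamma_j(t) = \Phi_j(t) \dot\Gamma_j(0)$ for all $t$ since $\Phi_j(t)$ is invertible. Assembling the blocks yields the claimed $N$, $L$, and transversality $\im T(\phi) \oplus \im N(\phi) = \mathbb{R}^n$ follows block by block. The main technical obstacle is that a real matrix logarithm of $\tilde M_j$ need not exist when $\tilde M_j$ has negative real eigenvalues of odd algebraic multiplicity; this forces a choice of framing of the normal bundle over each $\Gamma_j$ (equivalently, one may have to work on a double cover of the circle and then descend), and is where I would invoke the detailed construction in~\cite{VonderGracht2023a} rather than redo it.
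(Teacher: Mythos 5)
Your approach is essentially the same as the paper's: the block-diagonal reduction to each oscillator, Floquet theory on the variational equation along $\Gamma_j$, an $M_j$-invariant complement $W_j$ to $\mathrm{span}\{\dot\Gamma_j(0)\}$, and defining $N_j$ via the periodic part of the Floquet decomposition; the paper phrases this with $\Phi_j(t)=P_j(t)e^{B_jt}$ and extracts $N_j,\,L_j$ as blocks of $P_j,\,B_j$ after a basis change, which is precisely your construction in different notation. Your acknowledgment that a real logarithm of $\tilde M_j$ may fail to exist, and that one must then adjust the framing (or pass to a double cover), is also in line with what the paper sidesteps by citing~\cite{VonderGracht2023a}.

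There is, however, a sign error in your choice of $L_j$ that makes the asserted verification false as written. With $N_j(t)=\Phi_j(t)\big|_{W_j}e^{L_jt}$, differentiation using $\dot\Phi_j=F_j'(\Gamma_j)\Phi_j$ and $[L_j,e^{L_jt}]=0$ gives
\[
\dot N_j(t) = F_j'(\Gamma_j(t))\Phi_j(t)\big|_{W_j}e^{L_jt} + \Phi_j(t)\big|_{W_j}e^{L_jt}L_j = F_j'(\Gamma_j(t))N_j(t) + N_j(t)L_j,
\]
i.e.\ $\dot N_j - N_jL_j = F_j'(\Gamma_j)N_j$, which is~\eqref{eq:pde_N} with $L_j$ replaced by $-L_j$. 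Your claimed identity $\dot N_j = F_j'(\Gamma_j)N_j - N_jL_j$ does not follow from your definitions. The fix is trivial and does not affect hyperbolicity: either take $e^{T_jL_j}=\tilde M_j$ (not $\tilde M_j^{-1}$) and set $N_j(t)=\Phi_j(t)\big|_{W_j}e^{-L_jt}$ — which is exactly the standard periodic Floquet factor $P_j(t)=\Phi_j(t)e^{-B_jt}$ restricted to $W_j$ — or keep your $N_j$ and declare the final $L$ to be the negative of the matrix you constructed. Check also that periodicity of $N_j$ is then $N_j(t+T_j)=\Phi_j(t)\tilde M_je^{-L_j T_j}e^{-L_jt}\big|_{W_j}=N_j(t)$, which requires $e^{L_jT_j}=\tilde M_j$, consistent with the corrected sign.
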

\begin{proof}[Outline of the proof]
By assumption (cf.~Hypothesis~\ref{hyp:system}), the function $F$ is of the form $F(x) = (F_1(x_1), \ldots, F_n(x_n))$ and each ODE $\dot{x}_j = F(x_j)$ has a hyperbolic periodic solution $\Gamma_j(t)$. Floquet's theorem (see, for example,~\cite{hale2009}) tells that the fundamental solution $\Phi_j(t)$ of the linearized system 
\begin{equation} \label{eq:linearised_ODE_j}
\dot{y}_j(t) = \comp{F}{j}'(\comp{\Gamma}{j}(t)) y_j(t)
\end{equation}
can be decomposed as 
\[
\Phi_j(t) = P_j(t) e^{B_j t},
\]
where the matrix-valued function $P_j(t)$ is~$T_j$-periodic and~$B_j$ is a constant matrix. 

Since~$\dot{\Gamma}_j(t)$ is a $T_j$-periodic solution of the linearized system~\eqref{eq:linearised_ODE_j}, the matrix~$B_j$ can be chosen in such a way that it has an eigenvalue 0; for this choice of $B_j$ it holds that $P_j(t) \Phi_j(0) = \Phi_j(t)$. Moreover, because we have assumed that the periodic solution $\Gamma_j$ is hyperbolic, the matrix $B_j$ has no other eigenvalues on the imaginary axis. 

We next choose a basis of $\mathbb{R}^{n_j}$ in which the first column of the map $P_j$ is equal to $\dot{\Gamma}_j$ and the first column of $B_j$ is equal to the zero column. 
For each $\phi_j \in \mathbb{R}/2 \pi Z$, we denote by 
\[
N_j(\phi_j): \mathbb{R}^{\statedim_j-1} \to \mathbb{R}^{\statedim_j}
\]
the matrix that we obtain by, in the chosen basis, removing the first column from $P_j\left(\frac{\phi_j}{\omega_j}\right)$
and denote by 
\[
L_j: \mathbb{R}^{\statedim_j - 1} \to \mathbb{R}^{\statedim_j - 1}
\]
the matrix we obtain by, in the chosen basis, removing the first column and the first row from $B_j$. This matrix $L_j$ is hyperbolic, and moreover one can prove that the map $\phi_j \mapsto N_j(\phi_j)$ satisfies the differential equation
\[
N_j'(\phi_j) \cdot \omega_j + N_j(\phi_j) L_j = F'(\Gamma_j(\phi)) N_j(\phi_j)
\]
(see~\cite{VonderGracht2023a} for a more precise definition of $N_j$ and $B_j$ and a more precise derivation of the above equality).  
By defining
\[
N(\phi) = (N_1(\phi_1), \ldots, N_\torusdim(\phi_\torusdim)) \qquad \mbox{and} \qquad L = \mbox{diag}(L_1, \ldots, L_\torusdim)
\]
the statement of the lemma follows. 
\end{proof}

We next consider the function~$\ord{e}{\el}$, which corresponds to the $\el$th order Taylor coefficient of torus embedding~$\ord{e}{\eps}$ (cf.~\eqref{eq:expansion_E_f}), and use Lemma~\ref{lem:N_L} to write $\ord{e}{\el}$ as
\[
\ord{e}{\el}(\phi) = \Tg(\phi) \ord{g}{\el}(\phi) + N(\phi) \ord{h}{\el}(\phi). 
\]
Here we think of the map $\ord{g}{\el}: \torus{\torusdim} \to \mathbb{R}^\statedim$ as determining the component of $\ord{e}{\el}$ along the tangent bundle of the unperturbed torus; and we think of the map $\ord{h}{\el}: \torus{\torusdim} \to \mathbb{R}^{\statedim-\torusdim}$ as determining the component of $\ord{e}{\el}$ normal to the tangent bundle of the unperturbed torus. 
The following lemma shows that this Ansatz transforms the $\el$th order homological equation~\eqref{eq:con_order_j_n1} into two decoupled equations, namely one equation for $\ord{g}{\el}$ along the tangent bundle and one equation for $\ord{h}{\el}$ normal to the tangent bundle. 
We point out that the $\el$th order contribution to the vector field on the torus, denoted by $\ord{f}{\el}$, only appears in the equation along the tangent bundle, in agreement with the fact that the vector field takes values in the tangent bundle. 

\begin{lemma} \label{lem:con_order_splitted}
Let $\omega = (\omega_1, \ldots, \omega_m)$ be the vector consisting of the frequencies of the periodic orbits in the uncoupled system (cf.~\eqref{eq:omega}) and let $\ord{e}{0}: \torus{\torusdim} \to \mathbb{R}^n$ be the embedding of the unperturbed invariant torus in the uncoupled system defined in~\eqref{eq:E0f0}. 
Moreover, define the matrix-valued function $\Tg(\phi) = \partial_\phi \ord{e}{0}(\phi)$, and denote by $\phi \mapsto N(\phi)$ be the matrix-valued function from Lemma~\ref{lem:N_L} whose columns span the normal bundle of the unperturbed invariant torus in the uncoupled system. 

Then the Ansatz
\begin{equation} \label{eq:ej_decomp}
    \ord{e}{\el}(\phi) = \Tg(\phi) \ord{g}{\el}(\phi) + N(\phi) \ord{h}{\el}(\phi)
\end{equation}
with $\ord{g}{\el}: \torus{\torusdim} \to \mathbb{R}^\torusdim$ and $\ord{h}{\el}: \torus{\torusdim} \to \mathbb{R}^{\statedim-\torusdim}$ transforms the $\el$th order homological equation~\eqref{eq:con_order_j_n1} into the \textbf{split $\el$th order homological equations}%
\begin{subequations}\label{eq:conj_order}
\begin{align}
    \Tg(\phi) [\partial_{\omega} \ord{g}{\el}(\phi)  + \ord{f}{\el}(\phi)] &= \pi(\phi) \ord{\eta}{\el}(\phi),\label{eq:conj_tangent} \tag{hom.fin.tangent}\\
    N(\phi)[\partial_{\omega} \ord{h}{\el}(\phi) - L \ord{h}{\el}(\phi)] &= (I-\pi(\phi))[\ord{\eta}{\el}(\phi)]. \label{eq:conj_normal} \tag{hom.fin.normal}
\end{align}
\end{subequations}
Here, for each $\phi \in \torus{\torusdim}$, $\pi(\phi): \mathbb{R}^n \to \mathbb{R}^n$ is the projection operator on the tangent bundle of the unperturbed torus along the normal bundle of the unperturbed torus, i.e., it satisfies
\begin{equation} \label{eq:projection}
    \pi(\phi) \Tg(\phi)v = \Tg(\phi)v \quad \mbox{and} \quad \pi(\phi) N(\phi) w = 0 
\end{equation}
for all $v \in \mathbb{R}^\torusdim$ and all $w \in \mathbb{R}^{\statedim-\torusdim}$. 
\end{lemma}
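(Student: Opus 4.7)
The plan is to verify the equivalence by a direct computation: substitute the ansatz~\eqref{eq:ej_decomp} into~\eqref{eq:con_order_j_n1} and separate tangential from normal contributions using the projection~$\pi(\phi)$. Two auxiliary linear PDEs---one for~$\Tg$ and one for~$N$---are the tools that make this separation clean; the rest is bookkeeping via the Leibniz rule.

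First I would derive a PDE for $\Tg(\phi) = \partial_\phi \ord{e}{0}(\phi)$. Since by Lemma~\ref{lem:E0f0} we have $\ord{f}{0}(\phi) \equiv \omega$, the zeroth order equation~\eqref{eq:con_order_0} reads $\partial_\omega \ord{e}{0}(\phi) = F(\ord{e}{0}(\phi))$. Differentiating both sides with respect to $\phi$ and exchanging $\partial_\phi$ with $\partial_\omega$ yields
$$\partial_\omega \Tg(\phi) = F'(\ord{e}{0}(\phi))\, \Tg(\phi).$$
The companion PDE for $N$ is already furnished by Lemma~\ref{lem:N_L}, which gives $\partial_\omega N(\phi) - F'(\ord{e}{0}(\phi)) N(\phi) = -N(\phi) L$.

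Next I would substitute $\ord{e}{\el}(\phi) = \Tg(\phi)\ord{g}{\el}(\phi) + N(\phi)\ord{h}{\el}(\phi)$ together with $\partial_\phi \ord{e}{0}(\phi) = \Tg(\phi)$ into the left-hand side of~\eqref{eq:con_order_j_n1}. Applying the Leibniz rule to $\partial_\omega(\Tg\ord{g}{\el})$ and $\partial_\omega(N\ord{h}{\el})$, the two pairs of terms $(\partial_\omega \Tg)\ord{g}{\el} - F'(\ord{e}{0})\Tg\ord{g}{\el}$ and $(\partial_\omega N)\ord{h}{\el} - F'(\ord{e}{0})N\ord{h}{\el}$ collapse to $0$ and $-N L \ord{h}{\el}$, respectively, by the two PDEs above. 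What remains is the single identity
$$\Tg(\phi)\bigl[\partial_\omega \ord{g}{\el}(\phi) + \ord{f}{\el}(\phi)\bigr] + N(\phi)\bigl[\partial_\omega \ord{h}{\el}(\phi) - L \ord{h}{\el}(\phi)\bigr] = \ord{\eta}{\el}(\phi).$$

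Finally I would apply $\pi(\phi)$ and $I - \pi(\phi)$ to this identity. Since $\pi$ fixes $\im \Tg(\phi)$ and annihilates $\im N(\phi)$ by~\eqref{eq:projection}, and because $\mathbb{R}^n = \im \Tg(\phi) \oplus \im N(\phi)$ by Lemma~\ref{lem:N_L}, projection onto the tangent bundle extracts~\eqref{eq:conj_tangent} while projection onto the normal bundle extracts~\eqref{eq:conj_normal}; conversely, adding the two split equations recovers the combined identity, so the two formulations are equivalent. There is no substantial obstacle: the only step requiring a small observation is the derivation of the PDE for $\Tg$ from the zeroth order equation, which is what ensures that all $\ord{g}{\el}$-dependent contributions land cleanly in the tangent bundle with no residual coupling to the normal direction.
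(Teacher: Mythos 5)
Your proposal is correct and follows essentially the same route as the paper: substitute the ansatz, use the PDE from Lemma~\ref{lem:N_L} for~$N$ together with the observation that $\partial_\omega \Tg = F'(\ord{e}{0})\Tg$ (which the paper states as the equivalent identity $\partial_\phi^2 \ord{e}{0}(\phi)[\omega,v] = F'(\ord{e}{0}(\phi))[\partial_\phi \ord{e}{0}(\phi)\cdot v]$), and then project with $\pi$ and $I-\pi$. Your framing of the tangential cancellation as a PDE for~$\Tg$ symmetric to the one for~$N$ is a slightly tidier way to present the same computation.
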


\begin{proof} 
We first compute the directional derivative of the Ansatz~\eqref{eq:ej_decomp} in the direction $\omega$ and find that 
\begin{equation} \label{eq:ej_der}
\begin{aligned} 
    \partial_{\omega} \ord{e}{\el}(\phi) &= \partial_\phi^2 \ord{e}{0}(\phi) [\omega, \ord{g}{\el}(\phi)] + \partial_\phi \ord{e}{0}(\phi) [\partial_{\omega} \ord{g}{\el}(\phi)] 
    \\&\qquad
    + \partial_{\omega} N(\phi)[\ord{h}{\el}(\phi)] + N(\phi)[\partial_{\omega} \ord{h}{\el}(\phi)].
\end{aligned}
\end{equation}
Substituting the Ansatz~\eqref{eq:ej_decomp} together with its derivative~\eqref{eq:ej_der} into the $\el$th order homological equation~\eqref{eq:con_order_j_n1} gives that 
\begin{equation} \label{eq:con_gjhj_interm1}
\begin{aligned}
    \partial_\phi^2 \ord{e}{0}(\phi) [\omega, \ord{g}{\el}(\phi)] + \partial_\phi \ord{e}{0}(\phi) [\partial_{\omega} \ord{g}{\el}(\phi)] + \partial_{\omega} N(\phi) [\ord{h}{\el}(\phi)] &\\ + N(\phi)[\partial_{\omega} \ord{h}{\el}(\phi)]  - F'(\ord{e}{0}(\phi)) \partial_\phi \ord{e}{0}(\phi) \ord{g}{\el}(\phi) & \\ - F'(\ord{e}{0}(\phi)) N(\phi) \ord{h}{\el}(\phi) + \partial_\phi \ord{e}{0}(\varphi) \ord{f}{\el}(\phi) &= \ord{\eta}{\el}(\phi).
\end{aligned}
\end{equation}
Since $\partial_\phi \ord{e}{0}(\phi) \cdot \omega = F(\ord{e}{0}(\phi))$ (cf.~\eqref{eq:con_order_0}), it holds that
\[
\partial_\phi^2 \ord{e}{0}(\phi) [\omega, v] = F'(\ord{e}{0}(\phi)) [\partial_\phi \ord{e}{0}(\phi) \cdot v]
\]
for all $v \in \mathbb{R}^N$, and hence in particular that 
\begin{equation} \label{eq:simplify_step1}
    \partial_\phi^2 \ord{e}{0}(\phi) [\omega, \ord{g}{\el}(\phi)] = F'(\ord{e}{0}(\phi)) [\partial_\phi \ord{e}{0}(\phi) \cdot \ord{g}{\el}(\phi)].
\end{equation}
Additionally, the differential equation for $N(\phi)$ in~\eqref{eq:pde_N} implies that 
\begin{equation} \label{eq:simplify_step2}
    \partial_{\omega} N(\phi) [\ord{h}{\el}(\phi)] + N(\phi) L \ord{h}{\el}(\phi) = F'(\ord{e}{0}(\phi)) N(\phi) \ord{h}{\el}(\phi).
\end{equation}
Using~\eqref{eq:simplify_step1}--\eqref{eq:simplify_step2}, we simplify~\eqref{eq:con_gjhj_interm1} to
\begin{equation} \label{eq:con_gjhj_interm2}
\begin{aligned}
    \partial_\phi \ord{e}{0}(\phi) [\partial_{\omega} \ord{g}{\el}(\phi)] + N(\phi)[\partial_{\omega} \ord{h}{\el}(\phi)]  &\\{} -  N(\phi)L\ord{h}{\el}(\phi) + \partial_\phi \ord{e}{0}(\varphi) \ord{f}{\el}(\phi) &= \ord{\eta}{\el}(\phi).
    \end{aligned}
\end{equation}
Next denote by $\pi(\phi): \mathbb{R}^\statedim \to \mathbb{R}^\statedim$ the projection operator onto the tangent bundle of the unperturbed torus along the normal bundle of the unperturbed torus defined in~\eqref{eq:projection}; then applying $\pi(\phi)$ to both sides of equation~\eqref{eq:con_gjhj_interm2} yields the first equation~\eqref{eq:conj_tangent}; and applying $I-\pi(\phi)$ to both sides of equation~\eqref{eq:con_gjhj_interm2} yields with~$\Tg(\phi) = \partial_\phi \ord{e}{0}(\phi)$ the second equation~\eqref{eq:conj_normal}. 
\end{proof}

\begin{remark} \label{remark:pseudo-inverse}
    Because the matrices $ \Tg(\phi) = \partial_\phi \ord{e}{0}(\phi)$ and $N(\phi)$ are both injective (for each $\phi \in \torus{\torusdim}$), the split $\el$th order homological equations~\eqref{eq:conj_tangent} are equivalent to the equations
    \begin{subequations}
    \begin{align} \label{eq:conj_tangent_ps}
        \partial_{\omega} \ord{g}{\el}(\phi)  + \ord{f}{\el}(\phi) &= \left[\Tg(\phi) \right]^{+} \pi(\phi) \ord{\eta}{\el}(\phi) \\
       \partial_{\omega} \ord{h}{\el}(\phi) - L \ord{h}{\el}(\phi) &= \left[ N(\phi)\right]^{+} (I-\pi(\phi))[\ord{\eta}{\el}(\phi)].\label{eq:conj_normal_ps}
    \end{align}
    \end{subequations}
Here the matrices $\left[\Tg(\phi) \right]^{+}: \mathbb{R}^n \to \mathbb{R}^m$, $\left[N(\phi)\right]^{+}: \mathbb{R}^n \to \mathbb{R}^{n-m}$ are the \emph{pseudo-inverses} of the matrices $\Tg(\phi)$, $N(\phi)$, respectively; i.e., they are the matrices that satisfy
\begin{equation} \label{eq:pseudoinverse}
\begin{aligned}
    \left[\Tg(\phi) \right]^{+} \Tg(\phi) v &= v  & &\text{ and } & \left[\Tg(\phi) \right]^{+} N(\phi) v &= 0  \\
    \left[N(\phi)\right]^{+} N(\phi) w &= w & &\text{ and } & \left[N(\phi)\right]^{+} \Tg(\phi) w &= 0
\end{aligned}
\end{equation}
for all $v \in \mathbb{R}^m$ and $w \in \mathbb{R}^{n-m}$. They can either be constructed using the eigenfunctions of the adjoint variational equation or from the explicit formula $A^{+} = (A^\tr A)^{-1} A^\tr$ for the pseudo-inverse $A^{+}$ of a matrix~$A$. 
\end{remark}

\subsubsection{Solving the ODE}

We next prove that the tangential $\el$th order homological equation~\eqref{eq:conj_tangent} can be solved. However, we point out that the solution is \emph{not} unique, 
essentially because we can parametrize the tangential component of the invariant torus in different ways and different parametrizations  lead to different vector fields. It turns out that we can always choose the parametrization of the invariant torus 
in such a way that the description of the vector field is in \emph{normal form} up to a (fixed) finite order, a concept that we define below: 

\begin{defn}
Denote by $\ord{f}{\el}$ the $\el$th order Taylor coefficient of the vector field~$\ord{f}{\eps}$ (cf.~\eqref{eq:expansion_E_f}) and write its Fourier expansion as 
\[
\ord{f}{\el}(\phi) = \sum_{\textbf{k} \in \mathbb{Z}^n} \ord{\hat{f}}{\el}_{\textbf{k}} e^{i\dotp{\textbf{k}, \phi}}.
\]
Next, fix a finite number $K \in \mathbb{N}$. We say that the function $\ord{f}{\el}$ is in \textbf{normal form up to order $K$}
if 
\[
\ord{\hat{f}}{\el}_{\textbf{k}} = 0
\]
for all $\textbf{k} = (k_1, \ldots, k_m) \in \mathbb{Z}^\torusdim$ with $\dotp{\omega, \textbf{k}} \neq 0$ and $|k| := |k_1| +\dotsb + |k_m| \leq K$, 
i.e., if 
\[
\ord{f}{\el}(\phi) = \sum_{\substack{|k| \leq K \\ \dotp{\omega, \textbf{k}} = 0}} \ord{\hat{f}}{\el}_{\textbf{k}} e^{i\dotp{\textbf{k}, \phi}} + \sum_{|k| > K} \ord{\hat{f}}{\el}_{\textbf{k}} e^{i\dotp{\textbf{k}, \phi}}. 
\]
\end{defn}

\begin{lemma}[Solving the tangential $\el$th order homological equation] \label{lem:SolTang}
    For every smooth function 
    \[
    \ord{\eta}{\el}: \torus{\torusdim} \to \mathbb{R}^\statedim,
    \]
    there exist smooth functions 
    $\ord{g}{\el}, \ord{f}{\el}$ that solve the tangential $\el$th order homological equation~\eqref{eq:conj_tangent}. 
    Moreover, for every finite $K \in \mathbb{N}$, the pair $\ord{g}{\el}, \ord{f}{\el}$ can be chosen in such a way that the function $\ord{f}{\el}$ is in normal form up to order $K$. 
\end{lemma}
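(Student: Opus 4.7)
The strategy is to pass to the equivalent form of the tangential homological equation supplied by Remark~\ref{remark:pseudo-inverse}, namely
\begin{equation*}
\partial_\omega \ord{g}{\el}(\phi) + \ord{f}{\el}(\phi) = h(\phi), \qquad h(\phi) := \left[\Tg(\phi)\right]^{+} \pi(\phi) \ord{\eta}{\el}(\phi),
\end{equation*}
and to solve it mode-by-mode in Fourier space on $\torus{\torusdim}$. Since $\ord{\eta}{\el}$, the projection $\pi$, and the pseudo-inverse $\Tg^{+}$ are all smooth, the function $h: \torus{\torusdim} \to \mathbb{R}^\torusdim$ is smooth and hence has a rapidly-decaying Fourier series $h(\phi) = \sum_{\textbf{k} \in \mathbb{Z}^\torusdim} \hat h_\textbf{k}\, e^{i\dotp{\textbf{k}, \phi}}$. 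Expanding $\ord{g}{\el}$ and $\ord{f}{\el}$ as Fourier series as well and matching coefficients reduces the PDE to the countable family of scalar equations
\begin{equation*}
i\dotp{\omega, \textbf{k}}\,\ord{\hat g}{\el}_{\textbf{k}} + \ord{\hat f}{\el}_{\textbf{k}} = \hat h_{\textbf{k}}, \qquad \textbf{k} \in \mathbb{Z}^\torusdim.
\end{equation*}

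For each $\textbf{k}$ exactly one of the two unknown coefficients may be chosen freely, so I would construct $\ord{g}{\el}$ and $\ord{f}{\el}$ by a case split. For \emph{resonant} modes ($\dotp{\omega, \textbf{k}} = 0$) the coefficient $\ord{\hat g}{\el}_{\textbf{k}}$ drops out and we are forced to set $\ord{\hat f}{\el}_{\textbf{k}} = \hat h_{\textbf{k}}$; take $\ord{\hat g}{\el}_{\textbf{k}} = 0$. For \emph{non-resonant low-frequency} modes ($\dotp{\omega, \textbf{k}} \neq 0$ and $|\textbf{k}| \leq K$) the non-resonant term is killed in the vector field by setting $\ord{\hat f}{\el}_{\textbf{k}} = 0$ and $\ord{\hat g}{\el}_{\textbf{k}} = \hat h_{\textbf{k}} / (i\dotp{\omega, \textbf{k}})$. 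For all remaining modes ($\dotp{\omega, \textbf{k}} \neq 0$ and $|\textbf{k}| > K$) I would instead leave the Fourier coefficient of $h$ inside the vector field by setting $\ord{\hat g}{\el}_{\textbf{k}} = 0$ and $\ord{\hat f}{\el}_{\textbf{k}} = \hat h_{\textbf{k}}$.

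With these choices the Fourier sum for $\ord{g}{\el}$ has only finitely many nonzero terms (those with $|\textbf{k}| \leq K$), so $\ord{g}{\el}$ is a trigonometric polynomial and is manifestly smooth; while $\ord{f}{\el}$ differs from $h$ in only finitely many modes and thus inherits smoothness from $h$. By construction $\ord{\hat f}{\el}_{\textbf{k}} = 0$ whenever $|\textbf{k}| \leq K$ and $\dotp{\omega, \textbf{k}} \neq 0$, which is exactly the statement that $\ord{f}{\el}$ is in normal form up to order $K$, completing both conclusions of the lemma.

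The main obstacle I would flag is the classical \emph{small-divisor issue}: one is tempted to eliminate \emph{all} non-resonant Fourier modes of the vector field in one stroke, i.e., to set $\ord{\hat g}{\el}_{\textbf{k}} = \hat h_{\textbf{k}}/(i\dotp{\omega, \textbf{k}})$ for every $\textbf{k}$ with $\dotp{\omega, \textbf{k}} \neq 0$. Without a Diophantine assumption on $\omega$ the divisors $\dotp{\omega, \textbf{k}}$ can accumulate at zero, and the resulting formal series for $\ord{g}{\el}$ need not converge in $C^\infty$. Restricting the normalization to $|\textbf{k}| \leq K$ keeps the number of small divisors finite and circumvents this issue entirely, at the price of leaving the high-frequency non-resonant tail of $h$ inside the phase vector field $\ord{f}{\el}$.
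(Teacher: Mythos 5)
Your proof is correct and follows essentially the same route as the paper: pass to the pseudo-inverse form \eqref{eq:conj_tangent_ps} via Remark~\ref{remark:pseudo-inverse}, expand in Fourier modes on $\torus{\torusdim}$, and resolve the undetermination mode-by-mode with exactly the same case split (resonant, non-resonant with $|\textbf{k}|\leq K$, and non-resonant tail $|\textbf{k}|>K$). Two small additions you make are genuinely useful: you justify smoothness explicitly---$\ord{g}{\el}$ is a trigonometric polynomial, and $\ord{f}{\el}$ differs from the smooth right-hand side in only finitely many modes---whereas the paper simply asserts that the resulting Fourier series converge; and your remark on the small-divisor problem is exactly the unstated reason the paper restricts the normalization to $|\textbf{k}|\leq K$. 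Incidentally, your Fourier relation $i\dotp{\omega,\textbf{k}}\ord{\hat g}{\el}_{\textbf{k}}+\ord{\hat f}{\el}_{\textbf{k}}=\hat h_{\textbf{k}}$ carries the correct sign and the factor of $i$, both of which are typographically garbled in the paper's equation~\eqref{eq:Fourier_tangent} and its displayed formula for $\ord{\hat g}{\el}_{\textbf{k}}$, even though the paper's chosen solution is consistent with the corrected version.
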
 
\begin{proof}
Instead of solving~\eqref{eq:conj_tangent} directly, we first apply the pseudo-inverse $\left[\Tg(\phi) \right]^{+}$ to both sides of the equation to obtain the (equivalent) equation~\eqref{eq:conj_tangent_ps} (cf.~Remark~\ref{remark:pseudo-inverse}). This last equation we then explicitly solve as
\begin{align*}
\ord{g}{\el}(\phi) &= 0  & &\text{ and }& \ord{f}{\el}(\phi) &= \left[\Tg(\phi) \right]^{+} \pi(\phi) \ord{\eta}{\el}(\phi),
\end{align*}
which proves the first part of the claim. 

It remains to be shown that the pair $\ord{g}{\el}$, $\ord{f}{\el}$ can be chosen in such a way that $\ord{f}{\el}$ is in normal form up to a finite order $K \in \mathbb{N}$. 
To that end, we introduce the function
\[
\ord{\zeta}{\el}(\phi) = \left[\Tg(\phi) \right]^{+} \pi(\phi) \ord{\eta}{\el}(\phi)
\]
(which corresponds to the the right hand side of~\eqref{eq:conj_tangent_ps}) and write its Fourier expansion as 
\[
\ord{\zeta}{\el}(\phi)  = \sum_{\textbf{k} \in \mathbb{Z}^\torusdim} \ord{\hat{\zeta}}{\el}_{\textbf{k}} e^{i\dotp{\textbf{k}, \phi}}. 
\]
We also write the functions $\ord{f}{\el}, \ord{g}{\el}$ in terms of their Fourier series
\begin{align*}
\ord{f}{\el}(\phi)  &= \sum_{\textbf{k} \in \mathbb{Z}^\torusdim} \ord{\hat{f}}{\el}_{\textbf{k}} e^{i\dotp{\textbf{k}, \phi}}, & \ord{g}{\el}(\phi)  &= \sum_{\textbf{k} \in \mathbb{Z}^\torusdim} \ord{\hat{g}}{\el}_{\textbf{k}} e^{i\dotp{\textbf{k}, \phi}}
\end{align*}
so that the equation~\eqref{eq:conj_tangent_ps} reduces to the equation
\begin{equation} \label{eq:Fourier_tangent}
    i \dotp{\textbf{k}, \omega} \ord{\hat{g}}{\el}_{\textbf{k}} - \ord{\hat{f}}{\el}_{\textbf{k}} = \ord{\hat{\zeta}}{\el}_{\textbf{k}}
\end{equation}
for the Fourier coefficients. For fixed $K \in \mathbb{N}$, the choice 
\begin{align*}
\ord{\hat{g}}{\el}_{\textbf{k}} = 
    \begin{cases}
        \frac{1}{\dotp{\textbf{k} ,\omega}} \ord{\hat{\zeta}}{\el}_{\textbf{k}} & \,\mbox{for } |\textbf{k}| \leq K \mbox{ with } \dotp{\textbf{k}, \omega} \neq 0 \\
        0 &\, \mbox{for } |\textbf{k}| \leq K \mbox{ with } \dotp{\textbf{k}, \omega} = 0 \\
        0 &\, \mbox{for } |\textbf{k}| > K 
    \end{cases}, 
    \qquad \ord{\hat{f}}{\el}_{\textbf{k}} = \begin{cases}
        0 &\, \mbox{for } |\textbf{k}| \leq K \mbox{ with } \dotp{\textbf{k}, \omega} \neq 0 \\
        \ord{\hat{\zeta}}{\el}_{\textbf{k}} &\, \mbox{for } |\textbf{k}| \leq K \mbox{ with } \dotp{\textbf{k}, \omega} = 0 \\
        \ord{\hat{\zeta}}{\el}_{\textbf{k}} &\, \mbox{for } |\textbf{k}| > K 
    \end{cases}
\end{align*}
leads to two convergent Fourier series that solve the equation~\eqref{eq:conj_tangent_ps} and for which $\ord{f}{\el}$ is in normal form. This proves the second part of the claim. 
\end{proof}

We next solve the normal $\el$th order homological equation~\eqref{eq:conj_normal}. We point out that this equation \emph{does} have a unique solution (in contrast to the tangential equation~\eqref{eq:conj_tangent}): 
Ww have freedom in parameterizing the perturbed torus in the tangential direction, but once we have done so, the parametrization in the normal direction is fixed. 

\begin{lemma}[Solving the normal $\el$th order homological equation]\label{lem:SolNorm}
    For every smooth function 
    \[
    \ord{\eta}{\el}: \torus{m} \to \mathbb{R}^\statedim,
    \]
    there exist a smooth function
    $\ord{h}{\el}$ that solves the normal $\el$th order homological equation~\eqref{eq:conj_normal}.  
\end{lemma}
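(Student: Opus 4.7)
The plan is to mirror the structure of the proof of Lemma~\ref{lem:SolTang}: first pass to the pseudo-inverse form via Remark~\ref{remark:pseudo-inverse}, and then solve the resulting linear equation via Fourier series on $\torus{m}$. By Remark~\ref{remark:pseudo-inverse} the normal equation~\eqref{eq:conj_normal} is equivalent to
\begin{equation*}
\partial_{\omega}\ord{h}{\el}(\phi) - L\,\ord{h}{\el}(\phi) = \ord{\xi}{\el}(\phi), \qquad \ord{\xi}{\el}(\phi) := \left[N(\phi)\right]^{+}(I-\pi(\phi))\,\ord{\eta}{\el}(\phi),
\end{equation*}
where $\ord{\xi}{\el}: \torus{m}\to \mathbb{R}^{n-m}$ is smooth because $N$, $\pi$ and $\ord{\eta}{\el}$ are.

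Next I would expand both $\ord{h}{\el}$ and $\ord{\xi}{\el}$ in Fourier series
\begin{equation*}
\ord{h}{\el}(\phi) = \sum_{\mathbf{k}\in\mathbb{Z}^m} \ord{\hat h}{\el}_{\mathbf{k}}\, e^{i\langle \mathbf{k},\phi\rangle}, \qquad
\ord{\xi}{\el}(\phi) = \sum_{\mathbf{k}\in\mathbb{Z}^m} \ord{\hat\xi}{\el}_{\mathbf{k}}\, e^{i\langle \mathbf{k},\phi\rangle},
\end{equation*}
so that matching Fourier coefficients reduces the PDE to the algebraic system
\begin{equation*}
\bigl(i\langle \mathbf{k},\omega\rangle I - L\bigr)\,\ord{\hat h}{\el}_{\mathbf{k}} = \ord{\hat\xi}{\el}_{\mathbf{k}}, \qquad \mathbf{k}\in\mathbb{Z}^m.
\end{equation*}
The crucial point is that by Lemma~\ref{lem:N_L} the matrix $L$ is hyperbolic, i.e.\ no eigenvalue of $L$ lies on the imaginary axis. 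Since $i\langle \mathbf{k},\omega\rangle$ is purely imaginary, $i\langle \mathbf{k},\omega\rangle I - L$ is invertible for every $\mathbf{k}\in\mathbb{Z}^m$, and I can therefore define
\begin{equation*}
\ord{\hat h}{\el}_{\mathbf{k}} := \bigl(i\langle \mathbf{k},\omega\rangle I - L\bigr)^{-1}\,\ord{\hat\xi}{\el}_{\mathbf{k}}.
\end{equation*}

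The step that requires genuine care — and which I expect to be the main obstacle — is showing that this formal Fourier series actually defines a smooth function on $\torus{m}$. This will follow from hyperbolicity: since the spectrum of $L$ is at positive distance $c>0$ from the imaginary axis, the resolvents $(i\lambda I - L)^{-1}$ are bounded in operator norm uniformly for $\lambda\in\mathbb{R}$, so in particular the sequence $\{\|(i\langle \mathbf{k},\omega\rangle I - L)^{-1}\|\}_{\mathbf{k}\in\mathbb{Z}^m}$ is uniformly bounded. Because $\ord{\xi}{\el}$ is smooth, its Fourier coefficients $\ord{\hat\xi}{\el}_{\mathbf{k}}$ decay faster than any polynomial in $|\mathbf{k}|$; combined with the uniform resolvent bound, the same rapid decay holds for $\ord{\hat h}{\el}_{\mathbf{k}}$, so the series for $\ord{h}{\el}$ and all its derivatives converge absolutely and uniformly, giving a smooth solution. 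Uniqueness is immediate from the same Fourier calculation, since the only solution of the homogeneous equation is the zero function. This proves the lemma.
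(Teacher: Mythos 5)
Your proof follows the same route as the paper: apply the pseudo-inverse $[N(\phi)]^+$ to reduce to~\eqref{eq:conj_normal_ps}, pass to Fourier coefficients, invert $i\langle\mathbf{k},\omega\rangle I - L$ using hyperbolicity of $L$, and conclude convergence from the uniform resolvent bound. Your write-up in fact spells out the smoothness argument (rapid decay of Fourier coefficients of $\ord{\xi}{\el}$ propagating to $\ord{\hat h}{\el}_{\mathbf{k}}$) a bit more explicitly than the paper does, but the approach is identical.
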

\begin{proof}
Instead of solving~\eqref{eq:conj_normal} directly, we first apply the pseudo-inverse $\left[N(\phi)\right]^{+}$ to both sides of the equation to obtain the (equivalent) equation~\eqref{eq:conj_normal_ps} (cf.~Remark~\ref{remark:pseudo-inverse}).
We next introduce the function
\[
\ord{K}{\el}(\phi) = \left[ N(\phi)\right]^{+} (I-\pi(\phi)[\ord{\eta}{\el}(\phi)]
\]
(which corresponds to the right hand side of~\eqref{eq:conj_normal_ps}) and write the functions~$\ord{h}{\el}$ and~$\ord{K}{\el}$ in terms of their Fourier series: 
\begin{align*}
    \ord{K}{\el}(\phi)  &= \sum_{\textbf{k} \in \mathbb{Z}^\torusdim} \ord{\hat{K}}{\el}_{\textbf{k}} e^{i\dotp{\textbf{k}, \phi}}, & 
    \ord{h}{\el}(\phi)  &= \sum_{\textbf{k} \in \mathbb{Z}^\torusdim} \ord{\hat{h}}{\el}_{\textbf{k}} e^{i\dotp{\textbf{k}, \phi}}.
\end{align*}
From here, the equation~\eqref{eq:conj_normal_ps} reduces to the equations 
\begin{equation} \label{eq:Fourier_normal}
    i \dotp{\textbf{k}, \omega} \ord{\hat{h}}{\el}_\textbf{k} - L \ord{\hat{h}}{\el}_\textbf{k} = \ord{\hat{K}}{\el}_\textbf{k}
\end{equation}
for the Fourier coefficients. 
Since the matrix~$L$ is hyperbolic and thus has no spectrum on the imaginary axis (cf.~Lemma~\ref{lem:N_L}), 
the matrix $i \dotp{\textbf{k}, \omega} I - L$ is invertible for all $\textbf{k} \in \mathbb{Z}^\torusdim$, and the sequence 
\[
\left \lbrace \left \lVert \left(i \dotp{\textbf{k}, \omega} I - L\right)^{-1} \right \rVert \right \rbrace_{\textbf{k} \in \mathbb{Z}^\torusdim} 
\]
is bounded. Therefore each equation in~\eqref{eq:Fourier_normal} has a unique solution 
\[
\ord{\hat{h}}{\el}_\textbf{k} = \left(i \dotp{\textbf{k}, \omega} I - L\right)^{-1}\ord{\hat{K}}{\el}_\textbf{k}
\]
and this choice of coefficients leads to a convergent Fourier series for $\ord{h}{\el}$ that solves~\eqref{eq:conj_normal_ps}. 
\end{proof}

\subsubsection{Solving the Transport Equation}

Having solved the tangential $\el$th order equation~\eqref{eq:conj_tangent} for~$\ord{g}{\el}$ and~$\ord{f}{\el}$, and having solved the normal $\el$th order equation for~$\ord{h}{\el}$, we use again the coordinate transformation $\ord{e}{\el}(\phi) = \Tg(\phi) \ord{g}{\el}(\phi) + N(\phi) \ord{h}{\el}(\phi)$ to find a solution pair $\ord{e}{\el}, \ord{f}{\el}$ for the $\el$th order homological equation~\eqref{eq:con_order_j_n1} in the original coordinate frame (cf.~Lemma~\ref{lem:con_order_splitted}). The only task now left is to solve equation~\eqref{eq:con_order_j_n2}, which corresponds to the transport part of the $\el$th order homological equations. This is

\begin{lemma}\label{lem:TranspEq}
Suppose that the functions $\ord{e}{\el}, \ord{f}{\el}$ solve the finite dimensional $\el$th order homological equation~\eqref{eq:con_order_j_n1}, then the function
\begin{equation} \label{eq:Ej_phi_s}
    \begin{aligned}
\ord{E}{\el}(\phi, s) &= \ord{e}{\el}(\phi + s \omega) + \partial_\varphi \ord{E}{0}(\phi, s) \cdot \int_0^s \ord{f}{\el}(\phi+\omega(s-\zeta))d \zeta \\ &\qquad- \int_0^s \ord{H}{\el}(\phi + \omega(s-\zeta), \zeta) d \zeta    
\end{aligned}
\end{equation}
solves transport $\el$th order homological equation~\eqref{eq:con_order_j_n2} together with the boundary condition $\ord{E}{\el}(\phi, 0) = \ord{e}{\el}(\phi)$.
\end{lemma}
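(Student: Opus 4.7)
The plan is a direct verification: the formula~\eqref{eq:Ej_phi_s} is the Duhamel/method-of-characteristics representation for the transport operator $\partial_\omega-\partial_s$ with the given inhomogeneity and boundary data prescribed at $s=0$. The boundary condition $\ord{E}{\el}(\phi,0)=\ord{e}{\el}(\phi)$ is immediate, since at $s=0$ both integrals in~\eqref{eq:Ej_phi_s} have vanishing integration range, leaving $\ord{e}{\el}(\phi+0\cdot\omega)=\ord{e}{\el}(\phi)$. For the PDE itself, I would split $\ord{E}{\el}=A+B+C$ with
\[
A(\phi,s)=\ord{e}{\el}(\phi+s\omega),\quad C(\phi,s)=-\int_0^s \ord{H}{\el}(\phi+\omega(s-\zeta),\zeta)\,d\zeta,
\]
\[
B(\phi,s)=\partial_\varphi \ord{E}{0}(\phi,s)\cdot \int_0^s \ord{f}{\el}(\phi+\omega(s-\zeta))\,d\zeta,
\]
and compute $(\partial_\omega-\partial_s)$ on each summand.

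Term $A$ is a function of $\phi+s\omega$ alone, so the chain rule immediately gives $\partial_\omega A=\partial_s A$, hence $(\partial_\omega-\partial_s)A=0$. For term $C$, Leibniz's rule yields $\partial_s C=-\ord{H}{\el}(\phi,s)-\int_0^s\partial_\phi\ord{H}{\el}\cdot\omega\,d\zeta$ while $\partial_\omega C=-\int_0^s\partial_\phi\ord{H}{\el}\cdot\omega\,d\zeta$, so the integrals cancel and $(\partial_\omega-\partial_s)C=\ord{H}{\el}(\phi,s)$.

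The only subtle term is $B$. After the substitution $u=s-\zeta$ the integral factor simplifies to $I(\phi,s):=\int_0^s\ord{f}{\el}(\phi+\omega u)\,du$. The fundamental theorem of calculus gives $\partial_s I=\ord{f}{\el}(\phi+s\omega)$, and the chain rule gives $\partial_\omega I=\int_0^s \partial_\phi\ord{f}{\el}(\phi+\omega u)\cdot\omega\,du=\ord{f}{\el}(\phi+s\omega)-\ord{f}{\el}(\phi)$, so $(\partial_\omega-\partial_s)I=-\ord{f}{\el}(\phi)$. Writing $B=\partial_\varphi\ord{E}{0}\cdot I$ and applying the product rule,
\[
(\partial_\omega-\partial_s)B=\bigl[(\partial_\omega-\partial_s)\partial_\varphi \ord{E}{0}\bigr]\cdot I+\partial_\varphi \ord{E}{0}\cdot (\partial_\omega-\partial_s)I.
\]
The bracketed factor vanishes: the zeroth-order identity $\partial_\omega\ord{E}{0}=\partial_s\ord{E}{0}$ from~\eqref{eq:con_order_0} (after inserting $\ord{f}{0}\equiv\omega$), differentiated in $\phi$ and combined with the commutativity of mixed partials, yields $(\partial_\omega-\partial_s)\partial_\varphi\ord{E}{0}=0$. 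Therefore $(\partial_\omega-\partial_s)B=-\partial_\varphi\ord{E}{0}(\phi,s)\cdot\ord{f}{\el}(\phi)$.

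Summing the three contributions gives $(\partial_\omega-\partial_s)\ord{E}{\el}(\phi,s)=\ord{H}{\el}(\phi,s)-\partial_\varphi\ord{E}{0}(\phi,s)\cdot\ord{f}{\el}(\phi)$, which is precisely~\eqref{eq:con_order_j_n2} after rearrangement. The main obstacle I anticipate is purely one of bookkeeping: tracking which $\phi$-dependence each derivative sees inside $B$ (namely, both the explicit $\partial_\varphi\ord{E}{0}(\phi,s)$ factor and the $\phi$ appearing through $\phi+\omega(s-\zeta)$ in the integrand), and properly invoking the zeroth-order identity to produce the crucial cancellation. Beyond the regularity of $\ord{e}{\el}$ and $\ord{f}{\el}$, no additional property of these objects is needed for the verification; the finite-dimensional homological equation~\eqref{eq:con_order_j_n1} is only invoked through the fact that it has already furnished $\ord{e}{\el},\ord{f}{\el}$ that are then used as boundary and source data here.
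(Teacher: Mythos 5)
Your verification is correct; the key computations all check out. Where you depart from the paper is in the structure of the argument. The paper first proves a general method-of-characteristics representation (its Step~1): the transport problem $\partial_\omega U - \partial_s U = V$, $U(\phi,0)=u(\phi)$, is solved by $U(\phi,s)=u(\phi+s\omega)-\int_0^s V(\phi+(s-\zeta)\omega,\zeta)\,d\zeta$. It then specializes $u=\ord{e}{\el}$ and $V(\phi,s)=-\partial_\phi\ord{E}{0}(\phi,s)\cdot\ord{f}{\el}(\phi)+\ord{H}{\el}(\phi,s)$, obtaining a formula with $\partial_\phi\ord{E}{0}(\phi+(s-\zeta)\omega,\zeta)$ inside the integral, and finally uses the explicit invariance $\ord{E}{0}(\phi+(s-\zeta)\omega,\zeta)=\ord{E}{0}(\phi,s)$ (which it derives directly from the formula~\eqref{eq:E0f0}) to pull the Jacobian out of the integral and arrive at~\eqref{eq:Ej_phi_s}. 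You instead take~\eqref{eq:Ej_phi_s} as given, split it as $A+B+C$, and verify $(\partial_\omega-\partial_s)$ on each piece; the crucial cancellation in the $B$-term uses the differential version $(\partial_\omega-\partial_s)\partial_\phi\ord{E}{0}=0$ of the same invariance, obtained by differentiating the zeroth-order identity in $\phi$ rather than by inspecting the explicit periodic-orbit form of $\ord{E}{0}$. The two routes are logically equivalent: the paper's constancy of $\ord{E}{0}$ along characteristics is the integrated form of your differential identity. Your version is a more elementary pure verification and does not rely on the explicit parametrization of $\ord{E}{0}$ (only on the zeroth-order homological equation and $C^2$ regularity), which makes the argument slightly more robust; the paper's version has the advantage of actually deriving the formula from the Duhamel representation, explaining where~\eqref{eq:Ej_phi_s} comes from rather than only confirming it works.
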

\begin{proof}
We divide the proof into two steps: 

\medskip
\noindent \textsc{Step~1:}
We first fix two $C^1$ functions
\begin{align*}
u: \torus{\torusdim} &\to \mathbb{R}^\statedim  & &\text{ and } & V: \torus{\torusdim} \times [-\tau, 0] &\to \mathbb{R}^\statedim
\end{align*}
and show that the partial differential equation
\begin{align} \label{eq:PDE_U}
    \begin{cases}
        \partial_\phi U(\phi, s) \cdot \omega - \partial_s U(\phi, s) = V(\phi, s), &\quad \phi \in \torus{\torusdim}, \, s \in [-\tau, 0] \\
        U(\phi, 0) = u(\phi), &\quad \phi \in \torus{\torusdim}
    \end{cases}
\end{align}
is solved by the function 
\begin{equation} \label{eq:U}
    U(\phi, s) = u(\phi + s \omega) - \int_0^s V(\phi + (s-\zeta)\omega, \zeta) d \zeta. 
\end{equation} 
It follows directly from the definition of $U$ that $U(\phi, 0) = u(\phi)$. Moreover, computing $\partial_\phi U(\phi, s)\cdot \omega$ from the definition of~$U$ in~\eqref{eq:U} gives that
\[
\partial_\phi U(\phi, s)\cdot \omega = u'(\varphi + s \omega) \cdot \omega - \int_0^s V'(\phi + (s-\zeta)\omega, \zeta) \cdot \omega d \zeta 
\]
and simultaneously computing $\partial_s U(\phi, s)$ yields 
\[
\partial_s U(\phi, s) = u'(\phi + s \omega) \cdot \omega - \int_0^s V'(\phi + (s-\zeta)\omega, \zeta) \cdot \omega d \zeta - V(\phi, \zeta).
\]
Hence $\partial_\phi U(\phi, s)\cdot \omega - \partial_s U(\phi, s) = V(\phi, \zeta)$, which proves that the function $U$ defined in~\eqref{eq:U} solves the partial differential equation~\eqref{eq:PDE_U}. 

\medskip
\noindent \textsc{Step~2:} Applying Step~1 with 
\begin{align*}
    u(\phi) &= \ord{e}{\el}(\phi) \\
    V(\phi, s) &= - \partial_\phi \ord{E}{0}(\phi, s) \cdot \ord{f}{\el}(\phi) + \ord{H}{\el}(\phi, s)
\end{align*}
gives that the function 
\begin{equation*} 
\begin{aligned}
    \ord{E}{\el}(\varphi, s) &= \ord{e}{\el}(\phi + s \omega) + \int_0^s \partial_\phi \ord{E}{0}(\phi + (s-\zeta) \omega, \zeta) \cdot \ord{f}{\el}(\phi + (s-\zeta)\omega) d \zeta \\
    &\qquad - \int_0^s \ord{H}{\el}(\phi + (s-\zeta) \omega, \zeta) d \zeta
\end{aligned}
\end{equation*}
solves the partial differential equation~\eqref{eq:con_order_j_n2} together with the boundary condition $\ord{E}{\el}(\phi, 0) = \ord{e}{\el}(\phi)$. From the definition of $\ord{E}{0}$ in~\eqref{eq:E0f0} we compute 
\begin{align*}
    \ord{E}{0}_j(\phi + (s-\zeta) \omega, \zeta) &= \Gamma_j \left( \frac{\phi_j + (s-\zeta) \omega_j}{\omega_j} + \zeta\right) \\
    &= \Gamma_j \left( \frac{\phi_j}{\omega_j} + s \right) =
     \ord{E}{0}_j(\phi, s)
\end{align*}
for all~$1\leq j\leq \torusdim$.
This implies that $\ord{E}{0}(\phi + (s-\zeta) \omega, \zeta) =  \ord{E}{0}(\phi, s)$ and hence also $\partial_\phi \ord{E}{0}(\phi + (s-\zeta) \omega, \zeta) = \partial_\varphi \ord{E}{0}(\phi, s)$ for all $(\phi, s) \in \torus{\torusdim} \times [-\tau, 0]$. From there the expression for $\ord{E}{\el}$ reduces to
\[
\begin{aligned}
 \ord{E}{\el}(\varphi, s) &= \ord{e}{\el}(\phi + s \omega) + \partial_\phi \ord{E}{0}(\phi, s) \cdot \int_0^s \ord{f}{\el}(\phi + (s-\zeta)\omega) d \zeta \\
    &\qquad- \int_0^s \ord{H}{\el}(\phi + (s-\zeta) \omega, \zeta) d \zeta, 
\end{aligned}
\]
which proves the lemma. 
\end{proof}

\subsection{Summary of the algorithm} \label{subsec:algorithm}

The results of this section yield an algorithm to compute the expansions of the phase-reduced vector field~$\ord{f}{\eps}$ and of the torus embeddings~$\ord{e}{\eps}, \ord{E}{\eps}$ given by
\begin{align*}
    \ord{f}{\eps} &= \ord{f}{0} + \varepsilon \ord{f}{1} +\dotsb + \eps^\el \ord{f}{\el} +\dotsb \\
    \ord{e}{\eps} & = \ord{e}{0} + \varepsilon \ord{e}{1} +\dotsb + \eps^\el \ord{e}{\el} +\dotsb \\
    \ord{E}{\eps} &= \ord{E}{0} + \varepsilon \ord{E}{1} +\dotsb + \eps^\el \ord{E}{\el} +\dotsb    
\end{align*}
up to arbitrary order in~$\epsilon$: 

\medskip
\noindent
\framebox{\textbf{Starting point $\el = 0$}} 

\medskip
\noindent
Compute the functions $\ord{e}{0}$, $\ord{E}{0}$ and $\ord{f}{0}$ as 
\begin{align*}
    \ord{e}{0}(\phi) & = \left( \Gamma_1 \left(\frac{\varphi_1}{\omega_1}\right), \ldots, \Gamma_n \left(\frac{\varphi_\torusdim}{\omega_\torusdim} \right) \right) \\
     \ord{E}{0}(\varphi, s) &= \left( \Gamma_1 \left(\frac{\varphi_1}{\omega_1}+s \right), \ldots, \Gamma_\torusdim \left(\frac{\varphi_\torusdim}{\omega_\torusdim}+s \right) \right) \\
     \ord{f}{0}(\varphi) &\equiv \left(\omega_1, \ldots, \omega_\torusdim\right)
\end{align*}
where~$\Gamma_j$ for $1 \leq j \leq \torusdim$ is the periodic orbit of the uncoupled ODE $\dot{x}_j = F_j(x_j)$ and~$\omega_j$ is its frequency (cf.~Hypothesis~\ref{hyp:system}).

\medskip
\noindent
\framebox{\textbf{Iteration $\el-1 \to \el$}} 

\medskip
\noindent
Having computed $\ord{f}{\el'}$, $\ord{e}{\el'}$ and $\ord{E}{\el'}$ for all $1 \leq \el' \leq \el-1$, compute $\ord{f}{\el}$, $\ord{e}{\el}$ and $\ord{E}{\el}$ as follows: 
\begin{itemize}
    \item [(i)] Compute $\ord{\eta}{\el}$ and $\ord{H}{\el}$, which corresponds to the contribution of the lower order terms to the $\el$th order homological equations~\eqref{eq:con_order_j_n1}--\eqref{eq:con_order_j_n2}, respectively. 
    For the orders $j = 1, 2$, the explicit formulas for~$\ord{\eta}{\el}$ are 
    \begin{align*}
        \ord{\eta}{1}(\phi) &= G(\ord{E}{0}(\phi, \, . )) \\
        \ord{\eta}{2}(\phi) &= \frac{1}{2} F''(\ord{e}{0}(\phi))[\ord{e}{1}(\phi), \ord{e}{1}(\phi)] -  \partial_\phi \ord{e}{1}(\phi) \cdot \ord{f}{1}(\phi) \\ 
        & \qquad + G'(\ord{E}{0}(\phi, \, .))\ord{E}{1}(\phi, \, .) ,
    \intertext{where~$F$ is the vector field in the uncoupled system and~$G$ is the coupling functional in~\eqref{eq:DDE}; 
    and the explicit formulas for~$\ord{H}{\el}$ are}
        \ord{H}{1}(\phi, s) & \equiv 0 \\
        \ord{H}{2}(\phi, s) &= - \partial_{\phi} \ord{E}{1}(\phi, s) \cdot \ord{f}{1}(\phi). 
    \end{align*}
    \item[(ii)] Solve the tangential $\el$th order homological equation 
    \[
    \Tg(\phi) [\partial_{\omega} \ord{g}{\el}(\phi)  + \ord{f}{\el}(\phi)] = \pi(\phi) \ord{\eta}{\el}(\phi)
    \]
    for $\ord{g}{\el}$ and $\ord{f}{\el}$. This computation can be done in Fourier basis as follows: Denote by $\left[ \Tg(\phi) \right]^+$ the pseudo-inverse of the matrix $\Tg(\phi) = \partial_\phi \ord{e}{0}(\phi)$; cf.~Equation~\eqref{eq:pseudoinverse}. Denote by $\ord{\hat{\zeta}}{\el}_{\textbf{k}}$ the Fourier coefficients of the function $\phi \mapsto \left[\Tg(\phi) \right]^{+} \pi(\phi) \ord{\eta}{\el}(\phi)$, that is,
    \[
    \left[\Tg(\phi) \right]^{+} \pi(\phi) \ord{\eta}{\el}(\phi) = \sum_{\textbf{k} \in \mathbb{Z}^\torusdim} \ord{\hat{\zeta}}{\el}_{\textbf{k}} e^{i\dotp{\textbf{k}, \phi}}. 
    \]
    Then the Fourier coefficients $\ord{\hat{g}}{\el}_{\textbf{k}}$ of $\ord{g}{\el}$ and the Fourier coefficients $\ord{\hat{f}}{\el}_{\textbf{k}}$ of $\ord{f}{\el}$ 
    \begin{align*}
    \ord{g}{\el}(\phi)  &= \sum_{\textbf{k} \in \mathbb{Z}^\torusdim} \ord{\hat{g}}{\el}_{\textbf{k}} e^{i\dotp{\textbf{k}, \phi}}, & \ord{f}{\el}(\phi)  &= \sum_{\textbf{k} \in \mathbb{Z}^\torusdim} \ord{\hat{f}}{\el}_{\textbf{k}} e^{i\dotp{\textbf{k}, \phi}}
    \end{align*}
    satisfy the equations 
    \[
    i \dotp{\textbf{k}, \omega} \ord{\hat{g}}{\el}_{\textbf{k}} - \ord{\hat{f}}{\el}_{\textbf{k}} = \ord{\hat{\zeta}}{\el}_{\textbf{k}}. 
    \]
    In particular, the vector field $\ord{f}{\el}$ can be chosen in normal form up to order $K \in \mathbb{N}$ by setting 
    \begin{align*} \label{eq:nf_sol}
\ord{\hat{g}}{\el}_{\textbf{k}} &= 
    \begin{cases}
        \frac{1}{\dotp{\textbf{k} ,\omega}} \ord{\hat{\zeta}}{\el}_{\textbf{k}} & \,\mbox{for } |\textbf{k}| \leq K \mbox{ with } \dotp{\textbf{k}, \omega} \neq 0 \\
        0 &\, \mbox{for } |\textbf{k}| \leq K \mbox{ with } \dotp{\textbf{k}, \omega} = 0 \\
        0 &\, \mbox{for } |\textbf{k}| > K 
    \end{cases}, \\
     \ord{\hat{f}}{\el}_{\textbf{k}} &= \begin{cases}
        0 &\, \mbox{for } |\textbf{k}| \leq K \mbox{ with } \dotp{\textbf{k}, \omega} \neq 0 \\
        \ord{\hat{\zeta}}{\el}_{\textbf{k}} &\, \mbox{for } |\textbf{k}| \leq K \mbox{ with } \dotp{\textbf{k}, \omega} = 0 \\
        \ord{\hat{\zeta}}{\el}_{\textbf{k}} &\, \mbox{for } |\textbf{k}| > K. 
    \end{cases}
\end{align*}
    \item[(iii)] Solve the normal $\el$th order homological equation 
    \[
    N(\phi)[\partial_{\omega} \ord{h}{\el}(\phi) - L \ord{h}{\el}(\phi)] = (I-\pi(\phi))[\ord{\eta}{\el}(\phi)]
    \]
    for $\ord{h}{\el}$. This computation can be done in Fourier basis as follows: 
    Denote by $\left[ N(\phi) \right]^{+}$ pseudo-inverse of the matrix~$N(\phi)$ (see Lemma~\ref{lem:N_L} for the definition of~$N(\phi)$ and equation~\eqref{eq:pseudoinverse} for the definition of its pseudo-inverse~$\left[ N(\phi) \right]^{+}$). 
    Denote by $\ord{\hat{K}}{\el}_{\textbf{k}}$ the Fourier coefficients of the function $\phi \mapsto \left[ N(\phi)\right]^{+} (I-\pi(\phi))[\ord{\eta}{\el}(\phi)]$, i.e., 
    \[
    \left[ N(\phi)\right]^{+} (I-\pi(\phi))[\ord{\eta}{\el}(\phi)]  = \sum_{\textbf{k} \in \mathbb{Z}^\torusdim} \ord{\hat{K}}{\el}_{\textbf{k}} e^{i\dotp{\textbf{k}, \phi}}.
    \]
    Then the Fourier coefficients $\ord{\hat{h}}{\el}_{\textbf{k}}$ of the function $\ord{h}{\el}$
    \[
    \ord{h}{\el}(\phi)  = \sum_{\textbf{k} \in \mathbb{Z}^\torusdim} \ord{\hat{h}}{\el}_{\textbf{k}} e^{i\dotp{\textbf{k}, \phi}}
    \]
    are given by 
    \[
    \ord{\hat{h}}{\el}_\textbf{k} = \left(i \dotp{\textbf{k}, \omega} I - L\right)^{-1}\ord{\hat{K}}{\el}_\textbf{k},
    \]
    where the matrix $L$, which is related to the hyperbolic Floquet exponents of the uncoupled problem, is defined in Lemma~\ref{lem:N_L}. 
    \item[(iv)] Compute $\ord{e}{\el}$, which corresponds to the $\el$th order Taylor coefficient of the torus embedding $\ord{e}{\eps}$, as 
    \[
    \ord{e}{\el}(\phi) = \Tg(\phi) \ord{g}{\el}(\phi) + N(\phi) \ord{h}{\el}(\phi).
    \]
    \item[(v)] Compute $\ord{E}{\el}$, which corresponds to the $\el$th order Taylor coefficient of the torus embedding $\ord{E}{\eps}$, as 
    \begin{align*}
    \ord{E}{\el}(\phi, s) &= \ord{e}{\el}(\phi + s \omega) + \partial_\varphi \ord{E}{0}(\phi, s) \cdot \int_0^s \ord{f}{\el}(\phi+\omega(s-\zeta))d \zeta \\ &\qquad- \int_0^s \ord{H}{\el}(\phi + (s-\zeta)\omega, \zeta) d \zeta   .
    \end{align*}
\end{itemize}

\section{Computing phase reduction for two coupled Stuart--Landau oscillators}
\label{sec:sl}

\newcommand{\C}{\mathbb{C}}

In this section, we use the approach developed in Section~\ref{sec:method} to compute a second order phase reduction of a concrete model. 
Specifically, we consider the dynamics of two identical Stuart--Landau oscillators with time-delayed diffusive coupling whose states $z_1,z_2\in \C$ evolve according to
\begin{equation} \label{eq:sl_coupled}
\begin{aligned}
\dot{z}_1 &= (\alpha + i \beta) z_1 + (\gamma +i \delta) \left| z_1 \right|^2 z_1 + \epsilon e^{i \cc} \left(z_2(t-\tau) - z_1 \right) \\
\dot{z}_2 &= (\alpha + i \beta) z_2 + (\gamma +i \delta) \left| z_2 \right|^2 z_2 + \epsilon e^{i \cc} \left(z_1(t-\tau) - z_2 \right). 
\end{aligned}
\end{equation}
Here the parameters $\alpha, \beta, \gamma, \delta \in \mathbb{R}$ are the parameters of the Stuart--Landau oscillators, $\epsilon\geq 0$ is a small coupling constant, $\cc \in [0, 2 \pi]$ describes the rotation in the coupling and $\tau >0$ is the time delay. We write the coupled Stuart--Landau system~\eqref{eq:sl_coupled} in the form of the general DDE~\eqref{eq:DDE} by defining
\begin{subequations}
\begin{align} \label{eq:F_sl}
    F(z) &= \begin{pmatrix}
        \comp{F}{1}(z_1) \\
        \comp{F}{2}(z_2)
    \end{pmatrix} 
    =
    \begin{pmatrix}
        (\alpha + i \beta) z_1 + (\gamma + i \delta) \left| z_1 \right|^2 z_1 \\
        (\alpha + i \beta) z_2 + (\gamma + i \delta) \left| z_2 \right|^2 z_2
    \end{pmatrix} \\  \label{eq:G_sl}
    G(Z(t, \cdot)) &= G \begin{pmatrix}
        Z_1(t, \, \cdot) \\ Z_2(t, \, \cdot)
    \end{pmatrix}
    = 
    \begin{pmatrix}
        e^{i \cc}[z_2(t-\tau) - z_1(t)] \\
        e^{i \cc}[z_1(t-\tau) - z_2(t)]
    \end{pmatrix}, 
\end{align}
\end{subequations}
where we define the history segments $Z_j(t, \, \cdot)$ via $Z_j(t, s) = z_j(t+s)$ for $s \in [-\tau, 0]$ and $j = 1, 2$ and we additionally have made the identification $\C=\R^2$. 

For $\gamma \neq 0$, each of the (identical) ODEs
\begin{equation} \label{eq:sl_single}
    \dot{z}_j(t) = \ord{F}{\el}(z_j(t)) =  (\alpha + i \beta) z_j(t) + (\gamma + i \delta) \left| z_j(t) \right|^2 z_j(t), \qquad j \in \{1, 2\}
\end{equation}
has a periodic orbit 
\begin{subequations}
\begin{equation} \label{eq:po_exp}
    \Gamma_j(t) = Re^{i \Omega t}
\end{equation}
with radius $R$ and angular frequency $\Omega$ given by 
\begin{equation} \label{eq:R_omega_sl}
R = \sqrt{-\frac{\alpha}{\gamma}}, \qquad \Omega = \beta - \frac{\alpha \delta}{\gamma}. 
\end{equation}
\end{subequations}
The product of the two periodic orbits forms a two-dimensional invariant torus for the uncoupled system of Stuart--Landau oscillators, and this torus persists for small nonzero values of the coupling parameter $\epsilon$. 

The main result of this section is the computation phase-reduced dynamics to first and second order, as summarized in the following two lemmas: 

\begin{lemma}[First order phase reduction] \label{lem:phasereductiontwoSLoscillators}

Consider the delay-coupled Stuart--Landau oscillators~\eqref{eq:sl_coupled} 
with periodic orbits in the uncoupled system given by ~\eqref{eq:po_exp}--\eqref{eq:R_omega_sl}. 
The first order phase reduced dynamics for $\psi := \phi_1 - \phi_2$ is given by
\begin{align}\label{eq:SLfo}
\dot{\psi} = -2 \epsilon \left[\cos(\cc- \omega \tau) +\frac{\delta}{\gamma} \sin(\cc - \omega \tau) \right] \sin(\psi).    
\end{align}
\end{lemma}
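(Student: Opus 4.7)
The plan is to execute the algorithm of Section~\ref{subsec:algorithm} at order $\el=1$ for the Stuart--Landau system~\eqref{eq:sl_coupled}. Both uncoupled oscillators share the periodic orbit $\Gamma_j(t)=Re^{i\Omega t}$ and frequency $\omega_1=\omega_2=\Omega$, so the zeroth-order data read
\begin{align*}
\ord{e}{0}(\phi)&=(Re^{i\phi_1},Re^{i\phi_2}),\\
\ord{E}{0}(\phi,s)&=(Re^{i(\phi_1+\Omega s)},Re^{i(\phi_2+\Omega s)}),\\
\ord{f}{0}(\phi)&\equiv(\Omega,\Omega).
\end{align*}
Substituting $\ord{E}{0}$ into~\eqref{eq:G_sl} and using Remark~\ref{rmk:rhs_explicit} immediately gives
\[
\ord{\eta}{1}(\phi)=G(\ord{E}{0}(\phi,\,\cdot))=Re^{i\cc}\bigl(e^{i(\phi_2-\Omega\tau)}-e^{i\phi_1},\ e^{i(\phi_1-\Omega\tau)}-e^{i\phi_2}\bigr).
\]

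Next I would build the Floquet frame around $\ord{e}{0}$. The tangent matrix is simply $\Tg(\phi)=\mathrm{diag}(iRe^{i\phi_1},iRe^{i\phi_2})$. For the normal direction, I would linearise a single uncoupled oscillator around $\Gamma(t)$ and pass to the co-rotating variable $w=v\,e^{i\Omega t}$; a short computation removes the time-dependence of the linearised equation and yields the Floquet matrix
\[
B=\begin{pmatrix}-2\alpha & 0\\ 2R^{2}\delta & 0\end{pmatrix}
\]
on $\mathbb{R}^{2}\cong\mathbb{C}$, whose nontrivial eigenvalue $-2\alpha$ has eigenvector $(1,\delta/\gamma)$. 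Lemma~\ref{lem:N_L} then identifies
\[
N_j(\phi_j)=\Bigl(1+i\tfrac{\delta}{\gamma}\Bigr)e^{i\phi_j},\qquad L=\mathrm{diag}(-2\alpha,-2\alpha).
\]
This is the crucial step: the normal (isochron) direction tilts away from the radial direction whenever $\delta\neq 0$, and missing this tilt would erase the $\tfrac{\delta}{\gamma}\sin(\cc-\omega\tau)$ term in the final formula.

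With the frame fixed I would compute $[\Tg(\phi)]^{+}\pi(\phi)\ord{\eta}{1}(\phi)$ componentwise by decomposing each complex entry $W_j$ in the basis $\{iRe^{i\phi_j},(1+i\delta/\gamma)e^{i\phi_j}\}$: writing $W_je^{-i\phi_j}=iRa_j+(1+i\delta/\gamma)b_j$ and reading off real and imaginary parts gives
\[
a_j=\tfrac{1}{R}\bigl[\mathrm{Im}(W_je^{-i\phi_j})-\tfrac{\delta}{\gamma}\mathrm{Re}(W_je^{-i\phi_j})\bigr].
\]
Every Fourier mode appearing in $[\Tg(\phi)]^{+}\pi(\phi)\ord{\eta}{1}(\phi)$ has $k_1+k_2=0$ and is therefore resonant with $\omega=(\Omega,\Omega)$; Lemma~\ref{lem:SolTang} then permits the choice $\ord{g}{1}\equiv 0$ and $\ord{f}{1}_j=a_j$. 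Setting $\mu:=\cc-\Omega\tau$ and $\psi:=\phi_1-\phi_2$, plugging $\ord{\eta}{1}$ into the formula for $a_j$ yields
\begin{align*}
\ord{f}{1}_1(\phi)&=\sin(\mu-\psi)-\sin\cc-\tfrac{\delta}{\gamma}\bigl[\cos(\mu-\psi)-\cos\cc\bigr],\\
\ord{f}{1}_2(\phi)&=\sin(\mu+\psi)-\sin\cc-\tfrac{\delta}{\gamma}\bigl[\cos(\mu+\psi)-\cos\cc\bigr].
\end{align*}

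Finally I would subtract and apply $\sin(\mu-\psi)-\sin(\mu+\psi)=-2\cos\mu\sin\psi$ and $\cos(\mu-\psi)-\cos(\mu+\psi)=2\sin\mu\sin\psi$ to obtain
\[
\dot\psi=\epsilon\bigl(\ord{f}{1}_1-\ord{f}{1}_2\bigr)=-2\epsilon\Bigl[\cos(\cc-\omega\tau)+\tfrac{\delta}{\gamma}\sin(\cc-\omega\tau)\Bigr]\sin\psi,
\]
which is the claimed formula (with $\omega=\Omega$). The only delicate ingredient is the Floquet identification of $N$; once the isochron direction is correctly pinned down, the rest of the argument is pure substitution and trigonometric bookkeeping.
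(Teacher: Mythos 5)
Your proposal is correct and follows essentially the same route as the paper: compute the zeroth-order data, construct the tangential/normal frame $(\Tg, N)$ around the unperturbed torus, project $\ord{\eta}{1}=G(\ord{E}{0})$ onto the tangent directions, solve for $\ord{f}{1}$ with $\ord{g}{1}\equiv 0$, and subtract the components. The one small difference is cosmetic: you \emph{derive} $N_j(\phi_j)\propto(1+i\delta/\gamma)e^{i\phi_j}$ from the co-rotating Floquet computation (which matches the paper's $N_j=(\gamma+i\delta)e^{i\phi_j}$ up to an irrelevant scalar), whereas the paper simply posits that $N$ and verifies properties (i)--(ii) of Lemma~\ref{lem:N_L} directly.
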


The first-order phase dynamics~\eqref{eq:SLfo} are governed by a single harmonic. This restricts the possible dynamics that is predicted by this approximation.
In-phase synchrony ($\psi=0$) and anti-phase synchrony ($\psi=\pi$) are the only isolated equilibria of~\eqref{eq:SLfo}; we point out that the existence of these equilibria is also expected from the symmetry properties of the coupled Stuart--Landau system~\eqref{eq:sl_coupled}. These two equilibria can exchange stability in a degenerate bifurcation; 
in particular, bistability between in-phase and anti-phase synchrony is not possible in the first-order phase approximation.
To understand the dynamics of the coupled Stuart--Landau system~\eqref{eq:sl_coupled} for parameter values near this bifurcation point, one has to take second-order corrections into account. 

\begin{lemma}[Second order phase reduction for $\delta = 0$]\label{lem:SLso}
Consider the delay-coupled Stuart--Landau oscillators~\eqref{eq:sl_coupled} 
with periodic orbits in the uncoupled system given by ~\eqref{eq:po_exp}--\eqref{eq:R_omega_sl}. 
The second order phase reduced dynamics for $\psi := \phi_1 - \phi_2$ is given by
\begin{equation}\label{eq:SLso}
\begin{aligned}
    \dot{\psi} 
    = &-2 \epsilon \cos(\cc- \omega \tau) \sin(\psi)  \\
    &\qquad+ \epsilon^2 \left( 2 \tau \sin(\cc) \sin(\cc - \omega \tau) \sin(\psi) - \left[\tau + \frac{1}{ \alpha} \sin^2(\cc -  \omega  \tau) \right]\sin(2\psi) \right).
\end{aligned}
\end{equation}
\end{lemma}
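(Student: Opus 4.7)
The plan is to mechanically apply the algorithm of Section~\ref{subsec:algorithm} to the explicit right-hand sides~\eqref{eq:F_sl}--\eqref{eq:G_sl}, push the computation through second order in~$\epsilon$, and then project the two-dimensional torus dynamics onto the phase difference $\psi = \phi_1 - \phi_2$. The hypothesis $\delta = 0$ simplifies the Floquet ingredients substantially: writing $z_j = r_j e^{i\phi_j}$, the uncoupled equation becomes $\dot r_j = \alpha r_j + \gamma r_j^3$ and $\dot\phi_j = \beta$, so the radial and angular dynamics decouple and the radial linearisation at $r = R$ has Floquet exponent $2\alpha$. Hence in Lemma~\ref{lem:N_L} we may take $L = \mathrm{diag}(2\alpha, 2\alpha)$, the tangent columns are $T_j(\phi_j) = iRe^{i\phi_j}$, the normal columns are $N_j(\phi_j) = e^{i\phi_j}$ (identifying $\C = \R^2$), and the pseudo-inverses and the projection $\pi(\phi)$ are immediate.

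At first order, substituting $\ord{E}{0}_j(\phi,s) = R e^{i(\phi_j + \omega s)}$ with $\omega = \beta$ into $\ord{\eta}{1}(\phi) = G(\ord{E}{0}(\phi,\cdot))$ yields $\ord{\eta}{1}_1(\phi) = R e^{i\cc}(e^{i(\phi_2 - \omega\tau)} - e^{i\phi_1})$ together with the symmetric expression for the second component. Applying $[T(\phi)]^+ \pi(\phi)$ extracts the tangential part and, keeping the resonant Fourier modes (those with $\langle k,\omega\rangle = 0$) in $\ord{f}{1}$ as in Lemma~\ref{lem:SolTang}, reproduces~\eqref{eq:SLfo} at $\delta = 0$. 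Lemma~\ref{lem:SolNorm} then produces $\ord{h}{1}$ by dividing each Fourier coefficient by $i\langle k,\omega\rangle - 2\alpha$, and $\ord{e}{1}$ and $\ord{E}{1}$ follow from Lemma~\ref{lem:con_order_splitted} and Lemma~\ref{lem:TranspEq}, respectively.

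At second order, I evaluate the explicit formulas of Remark~\ref{rmk:rhs_explicit} for $\ord{\eta}{2}$ and $\ord{H}{2}$. Since $G$ is linear in its history argument, $G'(\ord{E}{0})\ord{E}{1} = G(\ord{E}{1})$; and since $F$ is cubic, $F''(\ord{e}{0})[\ord{e}{1},\ord{e}{1}]$ reduces to a closed-form polynomial in the already-computed $\ord{e}{1}$. Solving the tangential homological equation in Fourier basis and retaining only the resonant harmonics in $\ord{f}{2}$ gives the full second-order vector field on~$\torus{2}$. Taking the difference $\dot\psi = \ord{f}{\eps}_1(\phi) - \ord{f}{\eps}_2(\phi)$ and exploiting the $\mathbb{Z}_2$ exchange symmetry and diagonal $SO(2)$ symmetry of~\eqref{eq:sl_coupled}, which force $\ord{f}{\eps}$ to be equivariant and thereby confine the non-vanishing harmonics to $\sin\psi$ and $\sin(2\psi)$ at this order, yields~\eqref{eq:SLso}.

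The main obstacle is the algebraic bookkeeping at second order: although each Fourier operation is elementary, tracking the $e^{i\cc}$ and $e^{-i\omega\tau}$ phase factors together with the constraint $\gamma R^2 = -\alpha$, and ensuring all non-resonant contributions cancel to leave precisely the coefficients $2\tau\sin(\cc)\sin(\cc - \omega\tau)$ and $\tau + \alpha^{-1}\sin^2(\cc - \omega\tau)$, requires either a careful hand calculation or a short symbolic computation. The symmetry considerations above then certify that no further harmonics in~$\psi$ can appear at order~$\epsilon^2$.
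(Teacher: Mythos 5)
The overall plan---mechanically instantiating the algorithm of Section~\ref{subsec:algorithm} for the explicit $F,G$ of~\eqref{eq:F_sl}--\eqref{eq:G_sl}, pushing through to order~$\epsilon^2$, and reducing to the phase difference~$\psi$---is exactly the paper's route, and your use of the exchange and diagonal $SO(2)$ symmetries to certify that $\dot\psi$ must be an odd function of $\psi$ containing only the first two harmonics at this order is a nice sanity check that the paper does not spell out. However, there is a concrete sign error in the Floquet ingredients that would corrupt the second-order coefficient.

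You write that the radial linearisation at $r=R$ of $\dot r = \alpha r + \gamma r^3$ has Floquet exponent $2\alpha$ and hence that $L = \mathrm{diag}(2\alpha,2\alpha)$. The linearisation is $\alpha + 3\gamma R^2$, and with $\gamma R^2 = -\alpha$ this equals $-2\alpha$, not $2\alpha$ (consistent with a \emph{stable} limit cycle for $\alpha>0$, $\gamma<0$). Correspondingly, the constraint~\eqref{eq:pde_N} forces $L = -2\alpha I$: with $\delta=0$, $N_j(\phi_j) \propto e^{i\phi_j}$, one has $\partial_\omega N_j = i\beta N_j$ while $F'(\ord{e}{0})N_j = (i\beta - 2\alpha)N_j$, so $N_j L_j = -2\alpha N_j$ regardless of how you normalise $N$. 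With your $L$ the divisor in Lemma~\ref{lem:SolNorm} would be $i\langle \mathbf k,\omega\rangle - 2\alpha$ instead of $i\langle \mathbf k,\omega\rangle + 2\alpha$; for the resonant modes (where $\langle \mathbf k,\omega\rangle = 0$) this flips the sign of $\ord{h}{1}$ and hence of $\ord{e}{1}$ and of the $\tfrac{R}{2\alpha}$--part of $\ord{E}{1}$. Tracing this through $\ord{\eta}{2}$ (the terms $-\partial_\phi\ord{e}{1}\cdot\ord{f}{1}$ and $G'(\ord{E}{0})\ord{E}{1}$ each flip the $\tfrac{1}{4\alpha}$--contributions but not the $\tfrac{\tau}{2}$--contributions), the $\sin(2\psi)$ coefficient in the final answer becomes $-\tau + \tfrac{1}{\alpha}\sin^2(\rho-\omega\tau)$ instead of the correct $-\tau - \tfrac{1}{\alpha}\sin^2(\rho-\omega\tau)$. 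So the sign error is not cosmetic: it changes the answer, and in particular the structure of the bistability region analysed in Section~\ref{sec:bistab}.
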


The time delay enters the dynamical equations explicitly in the second order corrections to the phase dynamics.
These corrections also introduce a second harmonic that affects the dynamics:
the second harmonic allows for phase-locked equilibria to arise beyond the in-phase and anti-phase synchrony; cf.~\cite{Rusin2010}.
If these nontrivial phase-locked equilibria are unstable, this can yield bistability between in-phase and anti-phase synchrony in~\eqref{eq:SLso}.


We will return to the second order phase dynamics~\eqref{eq:SLso} and its relation to the delay-coupled SL oscillators in Section~\ref{sec:bistab}, and present the computations that lead to Lemmas~\ref{lem:phasereductiontwoSLoscillators} and~\ref{lem:SLso} in the remainder of this section.
For the first-order approximation (Lemma~\ref{lem:phasereductiontwoSLoscillators}), we solve the $\el$-order homological equations~\eqref{eq:conj_tangent}--\eqref{eq:conj_normal} for $\el=0$ and $\el=1$ to obtain the zero-th order information $\ord{f}{0}$, $\ord{e}{0}$ and $\ord{E}{0}$ together with the
first-order phase dynamics~$\ord{f}{1}$ (and $\ord{e}{1}$ through its tangential and normal components $\ord{g}{1}, \ord{h}{1}$).
For the second-order approximation (Lemma~\ref{lem:SLso}), we solve the $\el$-order homological equations~\eqref{eq:conj_order} for $\el=2$ to compute the second order corrections~$\ord{f}{2}$.
This requires knowledge of the first-order embeddings~$\ord{e}{1}, \ord{E}{1}$ of state and history.

\subsection{Zeroth and first order phase reduction}

Lemma~\ref{lem:E0f0} yields that the zero-th order homological equations~\eqref{eq:con_order_0} for the dynamics on the torus are solved by
\begin{equation}
    \label{eq:e0_sl}
    \begin{aligned}
    \ord{f}{0}(\phi) &= (\Omega, \Omega)^\tr \\
\ord{e}{0}(\phi) &= (Re^{i \phi_1}, Re^{i \phi_2})^\tr \\ \ord{E}{0}(\phi, s) &= (Re^{i (\phi_1 + \Omega s)}, Re^{i (\phi_2 + \Omega s)})^\tr
\end{aligned}
\end{equation}
As in Section~\ref{sec:iterative_con}, we will denote by 
\begin{equation*}
     \omega := \ord{f}{0}(\phi)= (\Omega, \Omega)
\end{equation*}
the vector consisting of the frequencies of the periodic orbits. 

\medskip

To solve the first order homological equations~\eqref{eq:conj_tangent}--\eqref{eq:conj_normal} with $\el = 1$, we first find an explicit expression for the matrix--valued function $\phi \mapsto N(\phi)$
whose columns span the normal bundle of the unperturbed torus (cf.~Lemma~\ref{lem:N_L}): 

\begin{lemma}
Let $F: \mathbb{R}^n \to \mathbb{R}^n$ be the vector field of the uncoupled Stuart--Landau oscillators defined in~\eqref{eq:F_sl}, let $\omega = (\Omega, \Omega)$ be the vector consisting of the frequencies of the periodic orbits in the uncoupled system and let $\ord{e}{0}$ defined in~\eqref{eq:e0_sl}
be the embedding of the unperturbed invariant torus in the uncoupled system. 
Moreover, let
\begin{equation} \label{eq:Tg_SL}
    T(\phi) := \partial_\phi \ord{e}{0}(\phi) = \begin{pmatrix}
    i R e^{i \phi_1} & 0 \\
    0 & i R e^{i \phi_2}
\end{pmatrix}
\end{equation}
be the matrix with the property that, for each $\phi \in \torus{2}$, the columns of $T(\phi)$ form a basis for the tangents space to the unperturbed torus at the point $\ord{e}{0}(\phi)$.

Then the family of smooth maps 
\[ N: \torus{2} \to \mathcal{L}(\mathbb{R}^2, \mathbb{C}^2)\]
defined by 
\begin{equation} \label{eq:N_sl}
    N (\phi) \begin{pmatrix}
        u_1 \\ u_2
    \end{pmatrix}
= \begin{pmatrix}
    (\gamma + i \delta) e^{i \phi_1} u_1 \\
    (\gamma + i \delta) e^{i \phi_2} u_2
\end{pmatrix}
\end{equation}
and the hyperbolic matrix 
\[
L \in \mathbb{R}^{2 \times 2}
\]
given by 
\begin{equation*}
    L \begin{pmatrix}
        u_1 \\ u_2
    \end{pmatrix}
     = \begin{pmatrix}
         -2 \alpha u_1 \\
         - 2 \alpha u_2
     \end{pmatrix}
\end{equation*}
have the following properties: 
\begin{enumerate}
\item For each $\phi \in \torus{2}$, the image of the map $N(\phi)$ is transverse to image of the map $T(\phi)$; this means that  
    \begin{equation*} 
    \im \Tg(\phi)   \oplus \im N(\varphi)= \mathbb{C}^{2}; 
    \end{equation*}
    \item The family $N(\phi)$ satisfies the differential equation
  \begin{equation} 
        \label{eq:pde_N_sl}\partial_{\omega} N(\phi) + N(\phi) L = F'(\ord{e}{0}(\phi)) N(\phi).
  \end{equation}
\end{enumerate}
\end{lemma}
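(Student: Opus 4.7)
The plan is to verify both claims directly, exploiting the fact that the uncoupled vector field $F$ together with the proposed matrices $T(\phi)$, $N(\phi)$, and $L$ all decouple into two independent blocks, one per oscillator. This reduces the lemma to a pair of identical computations for a single Stuart--Landau oscillator, and so I would present the proof as a blockwise verification.

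For the transversality property (item 1), I would observe that in each block the image of $T(\phi)$ is spanned by $iRe^{i\phi_j}$ and the image of $N(\phi)$ by $(\gamma + i\delta)e^{i\phi_j}$; after factoring out the nonzero phase $e^{i\phi_j}$, the question becomes whether $iR$ and $\gamma + i\delta$ are $\mathbb{R}$-linearly independent in $\mathbb{C}$. This holds precisely because $\gamma \neq 0$, which is already required for the existence of the periodic orbit in~\eqref{eq:po_exp}. Summing the two blocks then gives $\im T(\phi) \oplus \im N(\phi) = \mathbb{C}^2$.

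For the differential equation~\eqref{eq:pde_N_sl} (item 2), I would again work block by block and compute the two sides separately. The left-hand side is immediate: since $\omega = (\Omega,\Omega)$, applying $\partial_\omega$ to $N_j(\phi_j) u_j = (\gamma + i\delta) e^{i\phi_j} u_j$ pulls out a factor $i\Omega$, and right-multiplication by $L$ pulls out $-2\alpha$, so the LHS equals $(i\Omega - 2\alpha)(\gamma + i\delta) e^{i\phi_j} u_j$ in block $j$. The main subtlety---and the only real obstacle---is computing the right-hand side $F_j'(Re^{i\phi_j}) \cdot N_j(\phi_j) u_j$, because $F_j(z) = (\alpha + i\beta) z + (\gamma + i\delta)|z|^2 z$ is \emph{not} holomorphic in $z$. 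Treating $\mathbb{C}$ as $\mathbb{R}^2$, its real-linear derivative mixes $w$ and $\bar{w}$ contributions, namely $F_j'(z) w = (\alpha + i\beta) w + (\gamma + i\delta)\bigl(2|z|^2 w + z^2 \bar{w}\bigr)$. Substituting $z = Re^{i\phi_j}$ with $R^2 = -\alpha/\gamma$ and $w = (\gamma + i\delta) e^{i\phi_j} u_j$, the $z^2 \bar w$ term simplifies through the identity $(\gamma + i\delta)(\gamma - i\delta) = \gamma^2 + \delta^2$ and reduces to a real multiple of $e^{i\phi_j} u_j$. Collecting real and imaginary parts and using the definition $\Omega = \beta - \alpha\delta/\gamma$ then yields exactly the expression $(i\Omega - 2\alpha)(\gamma + i\delta) e^{i\phi_j} u_j$ obtained for the LHS. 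The difficulty of the proof is thus purely in bookkeeping the holomorphic versus antiholomorphic contributions of $F_j'$; modulo this, the lemma is a routine verification.
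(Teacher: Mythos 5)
Your proposal is correct and follows essentially the same route as the paper's own proof: both arguments decouple into two identical one-oscillator blocks, verify transversality by noting that $iR$ and $\gamma+i\delta$ are $\mathbb{R}$-linearly independent over $\mathbb{C}$ because $\gamma\neq 0$, and verify the differential equation by computing the real-linear derivative $F_j'(z)w=(\alpha+i\beta)w+(\gamma+i\delta)(2|z|^2w+z^2\bar w)$ and substituting $R^2=-\alpha/\gamma$, $\Omega=\beta-\alpha\delta/\gamma$. The only cosmetic difference is that the paper collects $2|z|^2w+z^2\bar w$ into the single factor $R^2(3\gamma+i\delta)$ before substituting for $R^2$, whereas you treat the $z^2\bar w$ term separately; this is the same calculation in a different order.
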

\begin{proof} 
We verify that the images of~$\Tg(\phi)$ and~$N(\phi)$ are transverse. To do so, we fix two real-valued vectors $(u_1, u_2) \in \mathbb{R}^2, \, (v_1, v_2) \in \mathbb{R}^2$ and fix a complex-valued vector $(w_1, w_2) \in \mathbb{C}^2$; these vectors satisfy the relation 
\[
\Tg(\phi) \begin{pmatrix}
    u_1 \\ u_2
\end{pmatrix} 
+ N(\phi)
\begin{pmatrix}
    v_1 \\ v_2
\end{pmatrix} = 
\begin{pmatrix}
    w_1 \\ w_2
\end{pmatrix}
\]
if and only if the identities
\begin{equation} \label{eq:transverse_sl}
    e^{i \phi_j}(i R u_j + (\gamma + i \delta) v_j) = w_j, \qquad j = 1, 2
\end{equation}
hold. But since $\gamma \neq 0$, the identities~\eqref{eq:transverse_sl} have a unique solution 
\[
v_j = \frac{1}{\gamma} \cos(\phi_j) w_j, \qquad u_j = \frac{1}{R} \left( - \sin(\phi_j) w_j - \frac{\delta}{\gamma} \cos(\phi_j) w_j \right)
\]
which  proves that the images of~$\Tg(\phi)$ and~$N(\phi)$ are transverse. 

   \medskip

We next prove that the map $\phi \mapsto N(\phi)$ solves the differential equation~\eqref{eq:pde_N_sl}. 
We fix a point $(z_1, z_2) \in \mathbb{C}^2$ and a vector $(v_1, v_2) \in \mathbb{C}^2$ and calculate that the directional derivative $F'(z_1, z_2)(v_1, v_2)$ is given by 
\begin{equation} \label{eq:F_der}
F'\begin{pmatrix}
    z_1 \\ z_2
\end{pmatrix}
\begin{pmatrix}
    v_1 \\ v_2
\end{pmatrix}
= 
\begin{pmatrix}
    (\alpha + i \beta) v_1 + (\gamma + i \delta) (2 |z_1|^2 v_1 + z_1^2 \bar{v}_1) \\
    (\alpha + i \beta) v_2 + (\gamma + i \delta) (2 |z_2|^2 v_2 + z_2^2 \bar{v}_2)
\end{pmatrix}.
\end{equation}
Given another (real-valued) vector $(u_1, u_2) \in \mathbb{R}^2$, this implies that
\begin{equation}
\begin{aligned}
F'(\ord{e}{0}(\phi)) N(\phi) \begin{pmatrix}
    u_1 \\ u_2
\end{pmatrix}
&= \left[ \alpha + i \beta + R^2 (3 \gamma + i \delta) \right]
\begin{pmatrix}
(\gamma + i \delta) e^{i \phi_1} \\
 (\gamma + i \delta) e^{i \phi_2}
\end{pmatrix} \\
&= [ i \beta + R^2 i \delta- 2 \alpha ] N(\phi) \begin{pmatrix}
    u_1 \\ u_2
\end{pmatrix},
\end{aligned}
\end{equation}
where in the last step we used the expression for the radius~$R$ given in~\eqref{eq:R_omega_sl} and the expression for~$N$ in~\eqref{eq:N_sl}. 
Simultaneously, we compute from the expressions for $N(\phi)$ and $L$ that
\begin{align*}
    \partial_{\omega} N(\phi) \begin{pmatrix}
        u_1 \\ u_2
    \end{pmatrix} + N(\phi) L \begin{pmatrix}
        u_1 \\ u_2
    \end{pmatrix}
& = (i \Omega - 2 \alpha) \begin{pmatrix}
    (\gamma + i \delta) e^{i \phi_1} u_1 \\
    (\gamma + i \delta) e^{i \phi_2} u_2
\end{pmatrix} \\
& = (i \beta + i \delta R^2 - 2 \alpha) N(\phi) \begin{pmatrix}
        u_1 \\ u_2
    \end{pmatrix},
\end{align*}
where in the last step we used the expression for the frequency~$\Omega$ in~\eqref{eq:R_omega_sl}. 
Comparing the above expression for $\partial_{\omega} N(\phi) + N(\phi) L$ with the expression for $F'(\ord{e}{0}(\phi)) N(\phi)$, it follows that the map $\phi \mapsto N(\phi)$ solves the differential equation~\eqref{eq:pde_N_sl}. 
\end{proof}

Having constructed the map $\phi \mapsto N(\phi)$, we next compute that the family of projection operators
\[
\phi \mapsto N(\phi) \in \mathcal{L}(\mathbb{C}^2, \mathbb{C}^2)
\]
with image equal to $\im(\Tg(\phi))$ and kernel equal to $\im(N(\phi))$ are given by 
\begin{equation}
\label{eq:sl_pi}
\begin{aligned} 
    \pi(0) \begin{pmatrix}
    z_1 \\ z_2
\end{pmatrix} &= \begin{pmatrix}
    i (- \frac{\delta}{\gamma} \Re(z_1) + \Im(z_1)) \\
    i (- \frac{\delta}{\gamma} \Re(z_2) + \Im(z_2))
\end{pmatrix} \\ 
\pi(\phi)&= \begin{pmatrix}
    e^{i \phi_1} & 0 \\
    0 & e^{i \phi_2}
\end{pmatrix}
\pi(0) 
\begin{pmatrix}
    e^{-i \phi_1} & 0 \\
    0 & e^{-i \phi_2}
\end{pmatrix}, 
\end{aligned}
\end{equation}
By first computing $\ord{\eta}{1}$ from the expression in~\eqref{eq:G1G2_expl},
we find that the right hand side of the first order tangential homological equation~\eqref{eq:conj_tangent} is given by
\begin{align*}
    \pi(\phi) \ord{\eta}{1}(\phi) &= iR\begin{pmatrix}
        -\frac{\delta}{\gamma} e^{i \phi_1} \left[\cos(\phi_2 - \phi_1 - \Omega \tau + \cc) - \cos(\cc)\right]  \\
        -\frac{\delta}{\gamma} e^{i \phi_2} \left[\cos(\phi_1 - \phi_2 - \Omega \tau + \cc) - \cos(\cc)\right]
    \end{pmatrix} \\
    &\qquad +
    i R\begin{pmatrix}
       e^{i \phi_1} \left[ \sin(\phi_2 - \phi_1 - \Omega \tau + \cc) - \sin(\cc) \right]\\
       e^{i \phi_2} \left[ \sin(\phi_1 - \phi_2 - \Omega \tau + \cc) - \sin(\cc) \right]
    \end{pmatrix}.
\end{align*}
Together with $\Tg(\phi) = \mbox{diag}(iR e^{i \phi_1}, iR e^{i \phi_2})$ as defined in~\eqref{eq:Tg_SL},
this implies that the first order tangential homological equation~\eqref{eq:conj_tangent} is equivalent to the equation
\begin{equation} \label{eq:conj_tangent_sl_1}
\begin{aligned}
    \partial_{\omega} \ord{g}{1}(\phi) + \ord{f}{1}(\phi) = 
     -\frac{\delta}{\gamma} &\begin{pmatrix}
         \cos(\phi_2 - \phi_1 - \Omega \tau + \cc) - \cos(\cc)  \\
       \cos(\phi_1 - \phi_2 - \Omega \tau + \cc) - \cos(\cc)
    \end{pmatrix} \\
    +
    &\begin{pmatrix}
        \sin(\phi_2 - \phi_1 - \Omega \tau + \cc) - \sin(\cc) \\
        \sin(\phi_1 - \phi_2 - \Omega \tau + \cc) - \sin(\cc) 
    \end{pmatrix}.
\end{aligned}
\end{equation}
The right hand side of the last equation is in normal form, in the sense that its Fourier series consists solely of modes $e^{i\dotp{(k_1, k_2), (\phi_1, \phi_2)}}$ with $\dotp{(k_1, k_2), \omega} = \dotp{(k_1, k_2), (\Omega, \Omega)} = 0$. 
Hence we can solve the equation~\eqref{eq:conj_tangent_sl_1} in such a way that $\ord{f}{1}$ is in normal form (cf.~Step~(ii) in Section~\ref{subsec:algorithm}) by setting 
\begin{subequations}
    \begin{equation}
  \ord{g}{1}(\phi) \equiv 0 
    \end{equation}
and 
\begin{equation} \label{eq:sl_f1}
\begin{aligned}
    \ord{f}{1}(\phi) = -\frac{\delta}{\gamma} &\begin{pmatrix}
         \cos(\phi_2 - \phi_1 - \Omega \tau + \cc) - \cos(\cc)  \\
       \cos(\phi_1 - \phi_2 - \Omega \tau + \cc) - \cos(\cc)
    \end{pmatrix} \\
    +
    &\begin{pmatrix}
        \sin(\phi_2 - \phi_1 - \Omega \tau + \cc) - \sin(\cc) \\
        \sin(\phi_1 - \phi_2 - \Omega \tau + \cc) - \sin(\cc) 
    \end{pmatrix}. 
\end{aligned}
\end{equation}
\end{subequations}
The expression for the first order vector field $\ord{f}{1}$ together with the zeroth order vector field $\ord{f}{0}$ (cf.~\eqref{eq:e0_sl}) now yields the statement of Lemma~\ref{lem:phasereductiontwoSLoscillators}. 

As a preparation for the second order computations in the next subsection, we here also solve the first order normal homological equation~\eqref{eq:conj_normal}. To that end, we observe that 
\[
(I - \pi(0)) \begin{pmatrix}
    z_1 \\ z_2
\end{pmatrix}
= 
(1 + i \frac{\delta}{\gamma})
\begin{pmatrix}
\Re(z_1) \\
\Re(z_2)
\end{pmatrix}
\]
from which we find that the right hand side of the first order normal homological equation~\eqref{eq:conj_normal} is given by 
\begin{align*}
    [I-\pi(\phi)] \ord{\eta}{1}(\phi)  = R(1 + i \frac{\delta}{\gamma})
\begin{pmatrix}
    e^{i \phi_1} \left[\cos(\phi_2 - \phi_1 - \Omega \tau + \cc) - \cos(\cc) \right] \\
     e^{i \phi_2} \left[\cos(\phi_1 - \phi_2 - \Omega \tau + \cc) - \cos(\cc) \right]
\end{pmatrix}.
\end{align*}
Combining this with the expression for $N(\phi)$ in~\eqref{eq:N_sl},
we find that the first order normal homological equation~\eqref{eq:conj_normal} is equivalent to the equation
\[
\partial_{\omega} \ord{h}{1}(\phi) - L \ord{h}{1}(\phi) = \frac{R}{\gamma} \begin{pmatrix}
    \cos(\phi_2 - \phi_1 - \Omega \tau + \cc) - \cos(\cc) \\
   \cos(\phi_1 - \phi_2 - \Omega \tau + \cc) - \cos(\cc) 
\end{pmatrix},
\]
which in turn is solved by 
\begin{equation} \label{eq:sl_h1}
    \ord{h}{1}(\phi) = \frac{R}{2 \gamma \alpha} \begin{pmatrix}
    \cos(\phi_2 - \phi_1 - \Omega \tau + \cc) - \cos(\cc) \\
   \cos(\phi_1 - \phi_2 - \Omega \tau + \cc) - \cos(\cc) 
\end{pmatrix}. 
\end{equation}
From here, we can compute the expressions for $\ord{e}{1}$ and $\ord{E}{1}$ from the equations \eqref{eq:ej_decomp} and \eqref{eq:Ej_phi_s}; we state the result of this computation for the specific parameter choice $\delta = 0$ in the next section.

\subsection{Second-order phase reduction}

We next compute the second order contribution contribution to the phase dynamics, i.e., we set $\el = 2$ in the tangential homological equation~\eqref{eq:conj_tangent} and solve for $\ord{f}{2}$ and $\ord{g}{2}$. To keep computations and the lengths of formulas manageable, we set $\delta = 0$ (this corresponds to the case where the isochrons of the uncoupled oscillators are straight lines~\cite{Leon2019a}). 
In the case $\delta = 0$, the expression for $\ord{f}{1}, \ord{e}{1}$ and $\ord{E}{1}$ from the previous section reduce to
\begin{equation}
\label{eq:info1_delta0}
    \begin{aligned} 
    \ord{f}{1}(\phi) &= \begin{pmatrix}
        \sin(\phi_2 - \phi_1 - \Omega \tau + \cc) - \sin(\cc) \\
        \sin(\phi_1 - \phi_2 - \Omega \tau + \cc) - \sin(\cc) 
    \end{pmatrix} \\
        \ord{e}{1}(\phi) &= \frac{R}{2 \alpha} \begin{pmatrix}
    e^{i \phi_1} \left[\cos(\phi_2 - \phi_1 - \Omega \tau + \cc) - \cos(\cc) \right] \\
   e^{i \phi_2} \left[\cos(\phi_1 - \phi_2 - \Omega \tau + \cc) - \cos(\cc)  \right]
\end{pmatrix} \\ 
\ord{E}{1}(\phi, s) &= \frac{R}{2 \alpha}
\begin{pmatrix}
    e^{i(\phi_1 + \Omega s)} \left[\cos(\phi_2 - \phi_1 - \Omega \tau + \cc) - \cos(\cc) \right] \\
    e^{i(\phi_2 + \Omega s)} \left[\cos(\phi_1 - \phi_2 - \Omega \tau + \cc) - \cos(\cc) \right]
\end{pmatrix}
 \\
 &\qquad+ R i s \begin{pmatrix}
     e^{i(\phi_1 + \Omega s)} \left[\sin(\phi_2 - \phi_1 - \Omega \tau + \cc) - \sin(\cc) \right] \\
     e^{i(\phi_2 + \Omega s)} \left[\sin(\phi_1 - \phi_2 - \Omega \tau + \cc) - \sin(\cc) \right]
 \end{pmatrix}
    \end{aligned}
\end{equation}

We next derive an explicit expression for the quantity~$\pi(\phi) \ord{\eta}{2}(\phi)$, which corresponds to the right hand side of the second order tangential homological equation~\eqref{eq:conj_tangent}. Using the expression for $\ord{\eta}{2}$ from~\eqref{eq:G1G2_expl}, we first write 
\begin{equation}
    \label{eq:pi_eta2}
    \begin{aligned}
\pi(\phi) \ord{\eta}{2}(\phi) &= \frac{1}{2}\pi(\phi)F''(\ord{e}{0}(\phi)) \left[\ord{e}{1}(\phi), \ord{e}{1}(\phi)\right]  
\\&\qquad - \pi(\phi) \left[\partial_\phi \ord{e}{1}(\phi) \cdot \ord{f}{1}(\phi)\right] \\&\qquad + \pi(\phi) \left[G'(\ord{E}{0}(\phi, \, .)) \ord{E}{1}(\phi, \, .) \right], 
\end{aligned}
\end{equation}
and we proceed by computing each of the three terms individually. 

We start by showing that the first term vanishes, i.e.,
\begin{equation} \label{eq:sl_G2term_zero}
    \frac{1}{2}\pi(\phi)F''(\ord{e}{0}(\phi)) \left[\ord{e}{1}(\phi), \ord{e}{1}(\phi)\right] = 0. 
\end{equation}
Given a point $z = (z_1, z_2) \in \mathbb{C}^2$ and a vector $v = (v_1, v_2) \in \mathbb{C}^2$,
we compute the second derivative of the uncoupled vector field $F$ (cf.~\eqref{eq:F_sl}) as
\[
    F''\begin{pmatrix}
        \comp{z}{1} \\
        \comp{z}{2}
    \end{pmatrix}
    \left[
    \begin{pmatrix}
        \comp{v}{1} \\ \comp{v}{2}
    \end{pmatrix}, 
    \begin{pmatrix}
        \comp{v}{1} \\ \comp{v}{2}
    \end{pmatrix}
    \right]= 
    \gamma 
    \begin{pmatrix}
        4 \comp{z}{1} |\comp{v}{1}|^2 + 2 \comp{\bar{z}}{1} (\comp{v}{1})^2 \\
        4 \comp{z}{2} |\comp{v}{2}|^2 + 2 \comp{\bar{z}}{2} (\comp{v}{2})^2
    \end{pmatrix}
\]
(here, we have used that $\delta = 0$). This implies that 
\begin{align*}
\frac{1}{2}F''(\ord{e}{0}(\phi))\left[\ord{e}{1}(\phi), \ord{e}{1}(\phi)\right] &= 
2 \gamma
\begin{pmatrix}
    e_1^{(0)} \left| e_1^{(1)}\right|^2 \\
   e_2^{(0)} \left| e_2^{(1)}\right|^2
\end{pmatrix}
+ \gamma 
\begin{pmatrix}
    \bar{e}_1^{(0)} \left( e_1^{(1)}\right)^2 \\
   \bar{e}_2^{(0)} \left( e_2^{(1)}\right)^2
\end{pmatrix} 
\end{align*}
We rewrite the right hand side as 
\[
2 \gamma R
\begin{pmatrix}
    e^{i \phi_1} \left| e_1^{(1)}\right|^2 \\
    e^{i \phi_2} \left| e_2^{(1)}\right|^2
\end{pmatrix}
+ \gamma \frac{R^3}{4 \alpha^2} \begin{pmatrix}
    e^{i \phi_1} \left[\cos(\phi_2- \phi_1 - \Omega \tau + \cc) - \cos(\cc)\right]^2 \\
    e^{i\phi_2}  \left[\cos(\phi_1- \phi_2 - \Omega \tau + \cc) - \cos(\cc)\right]^2
\end{pmatrix}
\]
and see that this expression lies in the kernel of the projection operator~$\pi(\phi)$
(cf.~\eqref{eq:sl_pi} with $\delta = 0$), from which the identity
\eqref{eq:sl_G2term_zero} follows.  

We next compute the second term of $\pi(\phi) \ord{\eta}{2}(\phi)$ in~\eqref{eq:pi_eta2}, i.e., we compute $\pi(\phi) \left[\partial_\phi \ord{e}{1}(\phi) \cdot \ord{f}{1}(\phi)\right]$. 
Write
$\Delta_1 = \phi_2 - \phi_1 - \Omega \tau + \cc$,
$\Delta_2 = \phi_1 - \phi_2 - \Omega \tau + \cc$
for the shifted phase differences.
Using the expressions for the first order contributions~$\ord{e}{1}$ and~$\ord{f}{1}$ in~\eqref{eq:info1_delta0}, we see that 
$\partial_\phi \ord{e}{1}(\phi) \cdot \ord{f}{1}(\phi)$ is of the form 
\[
\partial_\phi \ord{e}{1}(\phi) \cdot \ord{f}{1}(\phi)
=
\begin{pmatrix}
    e^{i \phi_1} z_1 \\
    e^{i \phi_2} z_2
\end{pmatrix}
\]
where $z_1, z_2$ are two complex numbers whose imaginary parts are given by
\begin{align*}
    \Im(z_1) &= \frac{R}{2 \alpha}\left( \cos(\Delta_1) - \cos(\cc) \right)\left( \sin(\Delta_1) - \sin(\cc) \right) \\
    \Im(z_2) &= \frac{R}{2 \alpha}\left( \cos(\Delta_2) - \cos(\cc) \right)\left( \sin(\Delta_2) - \sin(\cc) \right)
\end{align*}
Using again the projection operator $\pi(\phi)$ in~\eqref{eq:sl_pi} (with $\delta = 0$), we 
derive that 
\begin{equation} \label{eq:sl_pi_e1_f1}
\begin{aligned}
    \pi(\phi)\partial_\phi \ord{e}{1}(\phi) \cdot \ord{f}{1}(\phi)
&= \frac{iR}{4 \alpha}\sin(2\cc)  \begin{pmatrix}
        e^{i \phi_1}\\
        e^{i \phi_2}
    \end{pmatrix}  
    \\&\qquad 
    - \frac{iR}{2 \alpha} \cos(\cc)
    \begin{pmatrix}
        e^{i \phi_1} \sin(\Delta_1) \\
        e^{i \phi_1} \sin(\Delta_2)
    \end{pmatrix}
    \\&\qquad 
    - \frac{iR}{2 \alpha} \sin(\cc)
    \begin{pmatrix}
        e^{i \phi_1} \cos(\Delta_1) \\
        e^{i \phi_2} \cos(\Delta_2)
    \end{pmatrix}  
    \\&\qquad 
    + \frac{i R}{4 \alpha}
    \begin{pmatrix}
        e^{i\phi_1}\sin(2\Delta_1) \\
        e^{i\phi_2}\sin(2\Delta_2)
    \end{pmatrix} .
\end{aligned}
\end{equation}

We finally compute the third term in the expression for $\pi(\phi) \ord{\eta}{2}(\phi)$ in~\eqref{eq:pi_eta2}, i.e., we compute $\pi(\phi) \left[G'(\ord{E}{0}(\phi, \, .)) \ord{E}{1}(\phi, \, .) \right]$. The functional~$G$ defined in~\eqref{eq:G_sl} is linear and hence
\begin{align*}
G'(\ord{E}{0}(\phi, \, .)) \ord{E}{1}(\phi, \, .) &= G(\ord{E}{1}(\phi, \, .)) = \begin{pmatrix}
    e^{i \cc} \left[\ord{E}{1}_2(\phi, -\tau) - \ord{e}{1}_1(\phi) \right] \\
    e^{i \cc} \left[\ord{E}{1}_1(\phi, -\tau) - \ord{e}{1}_2(\phi) \right]
\end{pmatrix}  \\
&= 
\frac{R}{2 \alpha} \begin{pmatrix}
    e^{i (\phi_2 - \Omega \tau +\cc)} \left[\cos(\Delta_2) - \cos(\cc) \right] \\
    e^{i (\phi_1 - \Omega \tau+\cc)} \left[\cos(\Delta_1) - \cos(\cc) \right]
\end{pmatrix} 
\\&\qquad
- R i \tau 
\begin{pmatrix}
    e^{i (\phi_2 - \Omega \tau+\cc)} \left[\sin(\Delta_2)- \sin(\cc) \right] \\
    e^{i (\phi_1 - \Omega \tau+\cc)} \left[\sin(\Delta_1)- \sin(\cc) \right]
\end{pmatrix} 
\\&\qquad 
-\frac{R}{2 \alpha} \begin{pmatrix}
    e^{i (\phi_1+\cc)} \left[\cos(\Delta_1) - \cos(\cc) \right] \\
   e^{i (\phi_2+\cc)} \left[\cos(\Delta_2) - \cos(\cc)  \right]
   \end{pmatrix}.
\end{align*}
Together with the expression for~$\pi(\phi)$ (cf.~\eqref{eq:sl_pi} with $\delta = 0$) this implies that
\begin{align*}
    \pi(\phi) G'(\ord{E}{0}(\phi, \, .)) \ord{E}{1}(\phi, \, .) &= \frac{i R}{2 \alpha} \begin{pmatrix}
        e^{i \phi_1} \sin(\Delta_1) \left[\cos(\Delta_2) - \cos(\cc) \right] \\
        e^{i \phi_2} \sin(\Delta_2) \left[\cos(\Delta_1) - \cos(\cc) \right]
    \end{pmatrix} 
    \\&\qquad
    - i R \tau \begin{pmatrix}
     e^{i  \phi_1} \cos(\Delta_1) \left[\sin(\Delta_2) - \sin(\cc) \right] \\
     e^{i  \phi_2} \cos(\Delta_2) \left[\sin(\Delta_1) - \sin(\cc) \right] 
 \end{pmatrix}
 \\&\qquad
 - \frac{i R}{2 \alpha}
     \begin{pmatrix}
         e^{i \phi_1} \sin(\cc) \left[\cos(\Delta_1)- \cos(\cc)\right] \\
         e^{i \phi_2} \sin(\cc) \left[\cos(\Delta_2)-\cos(\cc) \right]
     \end{pmatrix}
\end{align*}
To simplify the above expression, we let $x \in \mathbb{R}$ and use the double angle formula's to derive that 
\begin{align*}
    \sin(x) \cos(-x+2\cc - 2 \Omega \tau) &= \sin(x) \left(\cos(x) \cos(2\cc - 2 \Omega \tau) + \sin(x) \sin(2\cc - 2 \Omega \tau)\right) \\
    & = \frac{1}{2}\cos(2\cc - 2 \Omega \tau) \sin(2x) + \frac{1}{2} \sin(2\cc - 2 \Omega \tau) \left(1 - \cos(2x) \right) \\
    \cos(x) \sin(-x+2\cc - 2 \Omega \tau) &= \cos(x) \left( -\sin(x) \cos(2\cc - 2 \Omega \tau) + \cos(x) \sin(2\cc - 2 \Omega \tau) \right) \\
    & = - \frac{1}{2} \cos(2\cc - 2 \Omega \tau)\sin(2x)  + \frac{1}{2} \sin(2\cc - 2 \Omega \tau) (1 + \cos(2x)).
\end{align*}
We apply these identities with $x = \Delta_1$ to the first component of the vector $\pi(\phi) G'(\ord{E}{0}(\phi, \, .)) \ord{E}{1}(\phi, \, .)$; and apply the identities with $x = \Delta_2$ to the second component of the vector $\pi(\phi) G'(\ord{E}{0}(\phi, \, .)) \ord{E}{1}(\phi, \, .)$; and 
we find that
\begin{align*}
    \pi(\phi) G'(\ord{E}{0}(\phi, \, .)) \ord{E}{1}(\phi, \, .) &= \left[\frac{iR}{4 \alpha} \sin(2\cc) +\left(\frac{iR}{4 \alpha} - \frac{iR \tau}{2}\right) \sin(2\cc - 2 \Omega \tau) \right]
    \begin{pmatrix}
        e^{i \phi_1}  \\
        e^{i \phi_2}
    \end{pmatrix}
    \\&\qquad
    - \frac{iR}{2 \alpha} \cos(\cc)
    \begin{pmatrix}
    e^{i \phi_1} \sin(\Delta_1) \\  
    e^{i \phi_2} \sin(\Delta_2)
    \end{pmatrix} 
\\&\qquad
    + \left[ - \frac{iR}{2 \alpha} + i R \tau \right] \sin(\cc)
    \begin{pmatrix}
        e^{i \phi_1}  \cos(\Delta_1) \\
        e^{i \phi_2}  \cos(\Delta_2)
    \end{pmatrix} 
    \\&\qquad
    +\left[ \frac{iR}{4 \alpha} + \frac{iR \tau}{2} \right] \cos(2\cc -2 \Omega \tau) 
    \begin{pmatrix}
        e^{i \phi_1} \sin(2\Delta_1) \\
        e^{i \phi_2} \sin(2\Delta_2)
    \end{pmatrix} 
    \\&\qquad
    + \left[ - \frac{iR}{4 \alpha} - \frac{iR\tau}{2} \right] \sin(2\cc - 2 \Omega \tau) 
    \begin{pmatrix}
        e^{i \phi_1} \cos(2\Delta_1) \\
        e^{i \phi_2} \cos(2\Delta_2)
    \end{pmatrix}. 
\end{align*}

Together with the expressions for the first two components of $\pi(\phi) \ord{\eta}{2}$ computed in~\eqref{eq:sl_pi_e1_f1} and~\eqref{eq:sl_G2term_zero}, we can now compute $\pi(\phi) \ord{\eta}{2}(\phi)$ from~\eqref{eq:pi_eta2}, i.e., we can compute the right hand side of the second order tangential homological equation~\eqref{eq:conj_tangent}. Next using that $\Tg(\phi) = \mbox{diag}(iR e^{i \phi_1}, iR e^{i \phi_2})$ (cf.~\eqref{eq:Tg_SL}), we derive that the second tangential homological equation~\eqref{eq:conj_tangent} is equivalent to the equation 
\begin{align*}
    \partial_{\omega} \ord{g}{2}(\phi) + \ord{f}{2}(\phi) &= \left[\frac{1}{4 \alpha} - \frac{\tau}{2}\right] \sin(2\cc - 2 \Omega \tau) 
    \begin{pmatrix}
        1  \\
        1
    \end{pmatrix}
    \\&\qquad
    + \  \tau  \sin(\cc)
    \begin{pmatrix}
         \cos(\Delta_1) \\
          \cos(\Delta_2)
    \end{pmatrix}
    \\&\qquad
    + 
    \left[ \left[\frac{1}{4 \alpha} + \frac{\tau}{2} \right] \cos(2 \cc - 2 \Omega \tau) - \frac{1}{4 \alpha} \right]
    \begin{pmatrix}
        \sin(2\Delta_1) \\
        \sin(2\Delta_2)
    \end{pmatrix}
    \\&\qquad
    - \left[ \frac{1}{4 \alpha} +\frac{\tau}{2} \right] \sin(2\cc - 2 \Omega \tau) 
    \begin{pmatrix}
         \cos(2\Delta_1) \\
         \cos(2\Delta_2)
    \end{pmatrix}. 
\end{align*}
The right hand side of the last equation is in normal form, i.e., the Fourier series of the right hand side consists solely of Fourier modes $e^{i\dotp{(k_1, k_2), (\phi_1, \phi_2)}}$ with $\dotp{(k_1, k_2), \omega} = \dotp{(k_1, k_2), (\Omega, \Omega)} = 0$. 
We can therefore solve the equation in such a way that $\ord{f}{2}$ is in normal form (cf.~Section~\ref{subsec:algorithm}) by setting 
\[
\ord{g}{2} = 0
\]
and 
\begin{align*}
    \ord{f}{2}(\phi) &= \left[\frac{1}{4 \alpha} - \frac{\tau
    }{2}\right] \sin(2\cc - 2 \Omega \tau) 
    \begin{pmatrix}
        1  \\
        1
    \end{pmatrix}\\
    &\qquad+ \  \tau  \sin(\cc)
    \begin{pmatrix}
        \cos(\phi_2 - \phi_1 - \Omega \tau + \cc) \\
        \cos(\phi_1 - \phi_2 - \Omega \tau + \cc)
    \end{pmatrix} \\
    &\qquad+ 
    \left[ \left[\frac{1}{4 \alpha} + \frac{\tau}{2} \right] \cos(2 \cc - 2 \Omega \tau) - \frac{1}{4 \alpha} \right]
    \begin{pmatrix}
        \sin(2(\phi_2 - \phi_1 - \Omega \tau + \cc)) \\
        \sin(2(\phi_1 - \phi_2 - \Omega \tau + \cc))
    \end{pmatrix} \\
    &\qquad- \left[ \frac{1}{4 \alpha} +\frac{\tau}{2} \right] \sin(2\cc - 2 \Omega \tau) 
    \begin{pmatrix}
        \cos(2(\phi_2 - \phi_1 - \Omega \tau + \cc)) \\
        \cos(2(\phi_1 - \phi_2 - \Omega \tau + \cc))
    \end{pmatrix} . 
\end{align*}
Together with the expressions for the zeroth order vector field $\ord{f}{0}$ and the first order vector field $\ord{f}{1}$ computed in the previous subsection, this now yields the statement of Lemma~\ref{lem:SLso}. 

\section{Synchronization and bistability of delay-coupled Stuart--Landau oscillators}
\label{sec:bistab}

In this section, we use the phase reduced system to deduce information about the delay-coupled Stuart--Landau oscillators~\eqref{eq:sl_coupled}. 
Recall from Lemma~\ref{lem:SLso} that for this system with parameter $\delta = 0$, the second order phase reduction in terms of the phase difference $\psi := \phi_1 - \phi_2$ is given by
\begin{equation} \label{eq:psi_ord2}
\begin{aligned}
     \dot{\psi} &=  -2 \epsilon \cos(\cc- \omega \tau) \sin(\psi)  \\
    &\qquad + \epsilon^2 \left( 2 \tau \sin(\cc) \sin(\cc - \omega \tau) \sin(\psi) - \left[\tau + \frac{1}{ \alpha} \sin^2(\cc -  \omega  \tau) \right]\sin(2\psi) \right)\\
    &\qquad + \mathcal{O}(\varepsilon^3).
\end{aligned}
\end{equation}
First, we prove that for small coupling strengths and small delays, there exists an open region in parameter space for which in-phase synchrony (the equilibrium $\psi=0$) and anti-phase synchrony (the equilibrium $\psi=\pi$) are simultaneously dynamically stable.
Second, we then compare the stability boundaries obtained from the one-dimensional phase reduction to numerical solutions of the infinite-dimensional delay-coupled Stuart-Landau system~\eqref{eq:sl_coupled}.
Even for larger delays, the phase reduction predicts these stability boundaries (and hence also the regions of bistability) remarkably well.

Linear stability of in-phase and anti-phase synchrony for the second-order phase dynamics~\eqref{eq:psi_ord2} are determined by the eigenvalues. 
\begin{subequations}
\begin{align}\label{eq:lambdasync}
\lambda_\mathrm{sync} &= -2\eps \cos(\cc-\omega\tau)-2\eps^2\left(\!\tau+\frac{1}{\alpha}\sin^2(\cc-\omega\tau)\!-\!\tau\sin(\cc)\sin(\cc-\omega\tau)\!\right)+ \mathcal{O}(\varepsilon^3) \\
\lambda_\mathrm{splay} \! &= \!  2 \eps \cos(\cc-\omega\tau) \! -\! 2 \eps^2\left(\!\tau \! +\! \frac{1}{\alpha}\sin^2(\cc-\omega\tau)\!+\!\tau\sin(\cc)\sin(\cc-\omega\tau)\!\right) + \mathcal{O}(\varepsilon^3). \label{eq:lambdasplay}
\end{align}
\end{subequations}
The eigenvalues vanish near the points $(\cc, \eps, \tau) = (\frac{\pi}{2}, 0,0)$ and $(\cc, \eps, \tau) = (\frac{3\pi}{2}, 0,0)$ in parameter-space.
Considering only first-order terms yields a degenerate bifurcation scenario that is completely determined by the effective phase shift parameter $\rho-\omega\tau$: For $-\frac{\pi}{2} < \rho - \omega\tau < \frac{\pi}{2}$ in-phase synchrony is linearly stable and anti-phase is unstable, for $\frac{\pi}{2} < \rho - \omega\tau < \frac{3\pi}{2}$ stability is inverted, and exchange of stability happens at $\rho-\omega\tau \in \{\frac{\pi}{2},\frac{3\pi}{2}\}$.
The following proposition provides an  approximation of where these eigenvalues vanish near the points $(\cc, \eps, \tau) = (\frac{\pi}{2}, 0,0)$ and $(\cc, \eps, \tau) = (\frac{3\pi}{2}, 0,0)$ in parameter-space based on the second order phase approximation~\eqref{eq:psi_ord2}:

\begin{prop}\label{prop:expansions}
Fix the parameters $\alpha, \beta, \gamma \neq 0$ and $\delta=0$ of the uncoupled Stuart-Landau oscillators~\eqref{eq:sl_single}. There exist a relatively open neighborhood $U\subset \mathbb{R}/2\pi\mathbb{Z} \times [0,\infty) \times [0,\infty)$ of  $(\frac{\pi}{2},0,0)$, a relatively open neighborhood $V \subset [0,\infty) \times [0,\infty)$ of $(0, 0)$, and a smooth function $\cc_{\rm sync}^{\pi/2}: V\to \mathbb{R}/2\pi\mathbb{Z}$ such that
$$\{\ (\rho, \eps, \tau)\in U \ | \ \lambda_{\rm sync} = 0\ \} = \{\ (\cc_{\rm sync}^{\rm \pi/2} (\eps, \tau), \eps, \tau)\in \mathbb{R}^3 \ | \ (\eps, \tau)\in V \ \} \, . $$
 The Taylor expansion of this function near $(\eps, \tau)=(0,0)$ is given by
    \begin{align}\label{eq:rhosyncpiovertwoexpansion}
    \cc_\mathrm{sync}^{\pi/2}(\eps, \tau)  & = \frac{\pi}{2} + \omega \tau + \varepsilon\alpha^{-1} + \frac{1}{2}\varepsilon \omega^2 \tau^3  + \mathcal{O}(\eps^2 + \varepsilon \tau^5)\, .
 \end{align}
 The zero set of $\lambda_{\rm splay}$ near $(\rho, \eps, \tau)=(\frac{\pi}{2}, 0, 0)$ and the zero sets of $\lambda_{\rm sync}$ and $\lambda_{\rm splay}$  near $(\rho, \eps, \tau)=(\frac{3\pi}{2}, 0, 0)$ are given by similar smooth functions, with Taylor expansions
    \begin{align}
     \cc_\mathrm{splay}^{\pi/2}(\eps, \tau)  & = \frac{\pi}{2}  + \omega\tau - \varepsilon \alpha^{-1} - 2 \eps\tau  + \frac{1}{2}\eps \omega^2 \tau^3 +  \mathcal{O}(\eps^2 + \varepsilon \tau^5)\, . \\ 
     \cc_\mathrm{sync}^{3\pi/2}(\eps, \tau)  & = \frac{3\pi}{2} + \omega \tau  - \varepsilon \alpha^{-1} - \frac{1}{2}\eps \omega^2 \tau^3  + \mathcal{O}(\eps^2 + \varepsilon \tau^5)\, .\\
     \cc_\mathrm{splay}^{3\pi/2}(\eps, \tau)  & = \frac{3\pi}{2} + \omega\tau  + \varepsilon \alpha^{-1} + 2\eps \tau - \frac{1}{2} \eps\omega^2 \tau^3+ \mathcal{O}(\eps^2 + \varepsilon \tau^5)\, . 
    \end{align}
\end{prop}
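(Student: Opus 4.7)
The plan is to apply the implicit function theorem to the eigenvalue equations after removing their common factor of~$\epsilon$. Since $\lambda_{\rm sync}$ and $\lambda_{\rm splay}$ vanish identically on $\{\epsilon=0\}$, the quotients $\tilde\lambda_{\rm sync}:=\epsilon^{-1}\lambda_{\rm sync}$ and $\tilde\lambda_{\rm splay}:=\epsilon^{-1}\lambda_{\rm splay}$ extend smoothly across $\epsilon=0$ and share the same zero set as the originals on $\{\epsilon>0\}$; explicitly,
\begin{align*}
\tilde\lambda_{\rm sync}(\rho,\epsilon,\tau) = -2\cos(\rho-\omega\tau) - 2\epsilon\bigl(\tau + \tfrac{1}{\alpha}\sin^2(\rho-\omega\tau) - \tau\sin(\rho)\sin(\rho-\omega\tau)\bigr) + \mathcal{O}(\epsilon^2),
\end{align*}
and analogously for $\tilde\lambda_{\rm splay}$.

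At $(\rho,\epsilon,\tau)=(\tfrac{\pi}{2},0,0)$ one has $\tilde\lambda_{\rm sync}=0$ and $\partial_\rho \tilde\lambda_{\rm sync}=2\sin(\rho-\omega\tau)=2\neq 0$, so the implicit function theorem produces a smooth function $\rho_{\rm sync}^{\pi/2}:V\to\mathbb{R}/2\pi\mathbb{Z}$ on a relatively open neighborhood $V$ of $(0,0)$ with $\tilde\lambda_{\rm sync}(\rho_{\rm sync}^{\pi/2}(\epsilon,\tau),\epsilon,\tau)=0$; its uniqueness clause then delivers the claimed equality of sets after shrinking~$U$. The analogous argument applied to $\tilde\lambda_{\rm splay}$ at $(\tfrac{\pi}{2},0,0)$ and to both quotients at $(\tfrac{3\pi}{2},0,0)$ yields the other three smooth branches.

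To derive the Taylor expansion of $\rho_{\rm sync}^{\pi/2}$, I first observe that at $\epsilon=0$ the equation $\cos(\rho-\omega\tau)=0$ near $(\tfrac{\pi}{2},0,0)$ forces the \emph{exact} identity $\rho_{\rm sync}^{\pi/2}(0,\tau)=\tfrac{\pi}{2}+\omega\tau$, so no pure-$\tau$ correction appears. Writing $\rho_{\rm sync}^{\pi/2}(\epsilon,\tau) = \tfrac{\pi}{2}+\omega\tau + \epsilon\,\rho_1(\tau)+\mathcal{O}(\epsilon^2)$ and using
$\cos(\tfrac{\pi}{2}+\epsilon\rho_1)=-\epsilon\rho_1+\mathcal{O}(\epsilon^3)$, $\sin^2(\rho-\omega\tau)=1+\mathcal{O}(\epsilon^2)$, and $\sin(\rho)\sin(\rho-\omega\tau)=\cos(\omega\tau)+\mathcal{O}(\epsilon)$, the equation $\tilde\lambda_{\rm sync}=0$ collapses at order~$\epsilon$ to
\begin{align*}
\rho_1(\tau) = \tfrac{1}{\alpha} + \tau\bigl(1-\cos(\omega\tau)\bigr) = \tfrac{1}{\alpha} + \tfrac{1}{2}\omega^2\tau^3 + \mathcal{O}(\tau^5),
\end{align*}
which matches~\eqref{eq:rhosyncpiovertwoexpansion}. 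The other three expansions follow by the same substitution scheme, with sign patterns tracking (i) $\cos(\tfrac{3\pi}{2}+\epsilon\rho_1) = +\epsilon\rho_1 + \mathcal{O}(\epsilon^3)$ for the $3\pi/2$-branches and (ii) the sign in front of $\tau\sin(\rho)\sin(\rho-\omega\tau)$ distinguishing $\lambda_{\rm sync}$ from $\lambda_{\rm splay}$; in the $\rm splay$ cases one additionally uses $\tau(1+\cos(\omega\tau))=2\tau-\tfrac{1}{2}\omega^2\tau^3+\mathcal{O}(\tau^5)$, which produces the explicit $2\epsilon\tau$ term.

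The computation is conceptually routine; the only obstacle is the bookkeeping needed to justify the stated error~$\mathcal{O}(\epsilon^2+\epsilon\tau^5)$. This form is honest because the $\mathcal{O}(\epsilon)$ coefficient of each~$\tilde\lambda$ depends on~$\tau$ only through the explicit term $\tau$, through $\sin^2(\rho-\omega\tau)=1+\mathcal{O}(\epsilon^2)$, and through $\tau(1\pm\cos(\omega\tau))$ whose leading truncation error is~$\mathcal{O}(\tau^5)$; all further corrections in~$\epsilon$ are $\mathcal{O}(\epsilon^2)$ and get absorbed into the error.
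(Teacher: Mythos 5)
Your argument is correct, and it is a genuinely different (and arguably cleaner) route than the paper's. You divide the eigenvalue by a single power of $\eps$ and apply the implicit function theorem directly to $\tilde\lambda_{\rm sync}(\rho,\eps,\tau)=\eps^{-1}\lambda_{\rm sync}$ in the original $(\rho,\eps,\tau)$-coordinates, using $\partial_\rho\tilde\lambda_{\rm sync}(\tfrac{\pi}{2},0,0)=2$. The paper instead performs the rescaling $\rho=\tfrac{\pi}{2}+\omega\tau+\eps r$ \emph{before} dividing, which forces a division by $\eps^{2}$ (the substitution makes $\cos(\rho-\omega\tau)\sim-\eps r$ contribute a second factor of $\eps$), and then applies the implicit function theorem to $F(r,\eps,\tau)=\eps^{-2}\lambda_{\rm sync}(\tfrac{\pi}{2}+\omega\tau+\eps r,\eps,\tau)$ in the blown-up variable $r$. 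Both constructions yield the same branch. The paper's version makes the structure $\rho_{\rm sync}^{\pi/2}=\tfrac{\pi}{2}+\omega\tau+\eps\,r_{\rm sync}^{\pi/2}(\eps,\tau)$ manifest, so the $\eps$-linearity of the leading correction is automatic; your version achieves the same thing by first noting the exact identity $\rho_{\rm sync}^{\pi/2}(0,\tau)=\tfrac{\pi}{2}+\omega\tau$ and then Taylor-expanding in $\eps$, which is transparent and avoids introducing the auxiliary variable. Incidentally, the paper's own write-up contains a small slip at this point (it states $F(r,0,0)=2r$ and $r_{\rm sync}^{\pi/2}(0,0)=0$, but from its own later display $F(r,\eps,\tau)=2r-\tfrac{2}{\alpha}-\omega^{2}\tau^{3}+\dotsb$ one in fact has $F(r,0,0)=2r-\tfrac{2}{\alpha}$ and $r_{\rm sync}^{\pi/2}(0,0)=\tfrac{1}{\alpha}$), whereas your expansion $\rho_{1}(\tau)=\tfrac{1}{\alpha}+\tau\left(1-\cos\omega\tau\right)$ is internally consistent throughout.

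One point you gloss over exactly as the paper does: because $\lambda_{\rm sync}$ vanishes identically on the slice $\{\eps=0\}$, the stated set equality in the proposition cannot hold verbatim for $\lambda_{\rm sync}$ itself (the left-hand side would contain the full two-dimensional slice $\{\eps=0\}\cap U$, while the graph on the right meets $\{\eps=0\}$ only in a curve). The clean statement is the set equality for the quotient $\tilde\lambda_{\rm sync}$, which your implicit-function-theorem step does deliver on all of $U$, including $\eps=0$. It would be worth flagging this explicitly when invoking the uniqueness clause, rather than silently identifying the zero sets of $\lambda_{\rm sync}$ and $\tilde\lambda_{\rm sync}$.
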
  
\begin{proof}
    Fixing $\alpha, \beta, \gamma\neq 0$ and $\delta=0$, we denote by  $\lambda_\mathrm{sync}(\rho, \eps, \tau)$ the eigenvalue given in~\eqref{eq:lambdasync} as a function of the remaining parameters $\rho, \eps, \tau$ in the  equations of motion for the delay-coupled Stuart-Landau operators~\eqref{eq:sl_coupled}. Now we define $$F(r, \eps, \tau) := \eps^{-2} \lambda_\mathrm{sync}\left(\frac{\pi}{2} + \omega \tau + \eps r, \eps, \tau\right)\ .$$ 
     Then $F$ can be extended to a  smooth  function in an open neighborhood of the point $(r,\eps,\tau)=(0,0,0)$. We have $F(r, 0,0) =  2r$ and therefore that $F(0,0,0)=0$ and $\frac{\partial F}{\partial r} (r,0, 0)=  2 \neq 0$. It thus follows from the implicit function theorem that there exists a function  $r_{\rm sync}^{\rm \pi/2} = r_{\rm sync}^{\rm \pi/2}(\eps, \tau)$ defined in an open neighborhood of $(\eps,\tau)=(0,0)$,  satisfying $r_{\rm sync}^{\rm \pi/2}(0,0)=0$ and  
   $F(r_{\rm sync}^{\rm \pi/2}(\eps, \tau), \eps, \tau)=0$.
   
  The Taylor expansion of this function can determined to any order by implicit differentiation. For example, we have
  $ F(r, \eps, \tau) = 2r  - \frac{2}{\alpha} - \omega^2 \tau^3 +\mathcal{O}(\tau^5)+\mathcal{O}(\eps)$ and therefore $r_{\rm sync}^{\pi/2}(\eps, \tau) = \frac{1}{\alpha} + \frac{1}{2}\omega^2\tau^3 + \mathcal{O}(\tau^5)+\mathcal{O}(\eps)$. This proves formula~\eqref{eq:rhosyncpiovertwoexpansion} for the Taylor expansion of $\rho_{\rm sync}^{\pi/2} = \frac{\pi}{2}  + \omega\tau + \eps r_{\rm sync}^{\pi/2}$. 
   The claims about the existence and Taylor expansions of $\rho_{\rm splay}^{\pi/2}, \rho_{\rm sync}^{3\pi/2}$ and $\rho_{\rm splay}^{3\pi/2}$   are proved in a similar way.
\end{proof}

\begin{remark}
    Proposition~\ref{prop:expansions} describes for which parameters the eigenvalues $\lambda_{\rm sync}$ and $\lambda_{\rm splay}$ vanish, but it does not state explicitly when these eigenvalues are positive or negative. The signs of the eigenvalues are not hard to determine though. Consider for example the eigenvalue $\lambda_{\rm sync}$ of the synchronous state near $(\rho, \eps, \tau)=(\frac{\pi}{2},0,0)$, which we studied in detail in the proof of the proposition. Because $\frac{\partial F}{\partial r}(r,0,0)=2>0$, it is clear that $F(r,\eps, \tau)>0$ for $r>r_{\rm sync}^{\pi/2}(\eps, \tau)$ and $F(r,\eps, \tau)<0$ for $r<r_{\rm sync}^{\pi/2}(\eps, \tau)$. This shows that $\lambda_{\rm sync}>0$ when $\rho > \rho_{\rm sync}^{\pi/2}(\eps, \tau)$ and $\lambda_{\rm sync}<0$ when $\rho < \rho_{\rm sync}^{\pi/2}(\eps, \tau)$. Similarly, $\lambda^{\pi/2}_{\rm splay} >0$ precisely when $\rho < \rho^{\pi/2}_{\rm splay}(\eps, \tau)$  and $\lambda^{\pi/2}_{\rm splay} < 0$ precisely when $\rho > \rho^{\pi/2}_{\rm splay}(\eps, \tau)$. Analogous statements hold for the parameters near $(\frac{3\pi}{2}, 0, 0)$. 
\end{remark} 
As a corollary of Proposition~\ref{prop:expansions} we find that there are regions of bistability in parameter space.

\begin{cor}\label{cor:Bistab}
    There exists a relatively open set of parameters $\tilde U$ in $(0, 2\pi)\times[0,\infty)\times[0,\infty)$, containing $(\frac{\pi}{2},0,0)$ in its closure, such that for $(\rho, \eps, \tau)\in 
    \tilde U$,  the synchronous state $\psi = 0$ 
    and the splay state $\psi=\pi$ 
     are both locally exponentially stable for the phase reduced equation~\eqref{eq:psi_ord2}. 
\end{cor}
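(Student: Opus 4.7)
The plan is to combine Proposition~\ref{prop:expansions} with the stability criteria recorded in the remark following it. That remark shows that near $(\rho,\eps,\tau)=(\tfrac{\pi}{2},0,0)$ one has $\lambda_{\rm sync}<0$ precisely when $\rho<\cc_{\rm sync}^{\pi/2}(\eps,\tau)$, and $\lambda_{\rm splay}<0$ precisely when $\rho>\cc_{\rm splay}^{\pi/2}(\eps,\tau)$. For the scalar ODE~\eqref{eq:psi_ord2} the linearisations at $\psi=0$ and $\psi=\pi$ are exactly $\lambda_{\rm sync}$ and $\lambda_{\rm splay}$, so both equilibria are hyperbolic sinks (hence locally exponentially stable) precisely when
\[
\cc_{\rm splay}^{\pi/2}(\eps,\tau)\ <\ \rho\ <\ \cc_{\rm sync}^{\pi/2}(\eps,\tau).
\]

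The central step is to verify that this interval is nonempty on an open set of $(\eps,\tau)$ accumulating at $(0,0)$. Subtracting the two Taylor expansions supplied by Proposition~\ref{prop:expansions} gives
\[
\cc_{\rm sync}^{\pi/2}(\eps,\tau)-\cc_{\rm splay}^{\pi/2}(\eps,\tau)\ =\ \frac{2\eps}{\alpha}+2\eps\tau+\mathcal{O}(\eps^2+\eps\tau^5).
\]
In the standard Stuart--Landau regime $\alpha>0$ (where~\eqref{eq:sl_single} admits a stable limit cycle, forcing $\gamma<0$ via $R^2=-\alpha/\gamma>0$), the leading term $2\eps/\alpha$ is strictly positive on $\{\eps>0\}$ and of order $\eps$, while the remainder carries an extra factor of $\eps$ or of $\tau^5$. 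Hence on some neighborhood $V'\subseteq V$ of $(0,0)$ intersected with $\{\eps>0\}$ the displayed difference is strictly positive.

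I would then define
\[
\tilde U\ :=\ \bigl\{(\rho,\eps,\tau)\in (0,2\pi)\times V'\ \bigm|\ \cc_{\rm splay}^{\pi/2}(\eps,\tau)<\rho<\cc_{\rm sync}^{\pi/2}(\eps,\tau),\ \eps>0\bigr\}.
\]
By continuity of $\cc_{\rm sync}^{\pi/2}$ and $\cc_{\rm splay}^{\pi/2}$ this is a relatively open subset of $(0,2\pi)\times[0,\infty)\times[0,\infty)$, and by the first paragraph every point of $\tilde U$ yields simultaneous local exponential stability of $\psi=0$ and $\psi=\pi$ in~\eqref{eq:psi_ord2}. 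Finally, since $\cc_{\rm sync}^{\pi/2}(0,0)=\cc_{\rm splay}^{\pi/2}(0,0)=\tfrac{\pi}{2}$, the interval of admissible $\rho$ shrinks to the single point $\tfrac{\pi}{2}$ as $(\eps,\tau)\to(0,0)$, so $(\tfrac{\pi}{2},0,0)\in\overline{\tilde U}$. The only slightly delicate point is the dominance of $2\eps/\alpha$ over the $\mathcal{O}(\eps^2+\eps\tau^5)$ remainder on a full neighborhood of $(0,0)$, which however is immediate from the vanishing of the ratio at the origin; all remaining ingredients are direct consequences of Proposition~\ref{prop:expansions} and the scalar linearisation principle.
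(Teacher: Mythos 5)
Your proposal is correct and follows the same overall strategy as the paper: reduce bistability to the interval condition $\cc_{\rm splay}^{\pi/2}(\eps,\tau) < \rho < \cc_{\rm sync}^{\pi/2}(\eps,\tau)$ using the stability equivalences from the remark, and then invoke the Taylor expansions of Proposition~\ref{prop:expansions} to show this interval is nonempty on a set accumulating at $(\tfrac{\pi}{2},0,0)$. The only meaningful difference is presentational: the paper bounds $\cc_{\rm sync}^{\pi/2}$ from below and $\cc_{\rm splay}^{\pi/2}$ from above using a uniform constant $C$, restricts to $0<\eps+\tau<\tfrac{1}{2C\alpha}$, and then takes $\tilde U$ to be the explicit sub-band $\tfrac{\pi}{2}+\omega\tau-\tfrac{\eps}{2\alpha}<\rho<\tfrac{\pi}{2}+\omega\tau+\tfrac{\eps}{2\alpha}$, verifying a chain of inequalities to place this band inside the interval; you instead subtract the two Taylor series, factor out $\eps$, observe the bracket $\tfrac{2}{\alpha}+2\tau+\mathcal{O}(\eps+\tau^5)$ is bounded away from zero near the origin, and simply take $\tilde U$ to be the full open interval between the two curves. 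Your version is slightly more streamlined and, usefully, makes explicit the sign condition $\alpha>0$ that the paper uses only implicitly (for instance in writing $\tfrac{1}{2C\alpha}$ as a positive threshold). One small technical remark: you should intersect your $\tilde U$ with the neighborhood $U$ of Proposition~\ref{prop:expansions} so that the stability equivalences $\lambda_{\rm sync}<0\Leftrightarrow\rho<\cc_{\rm sync}^{\pi/2}$ and $\lambda_{\rm splay}<0\Leftrightarrow\rho>\cc_{\rm splay}^{\pi/2}$ are actually available at every point of $\tilde U$; the paper's definition builds this in by writing $\tilde U\subset U$. This is automatic for $(\eps,\tau)$ small enough since the $\rho$-interval collapses to $\{\tfrac{\pi}{2}\}$, but it is worth stating.
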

\begin{proof}
Let $U\subset \mathbb{R}/2\pi\mathbb{Z} \times [0,\infty) \times [0,\infty)$ be the relatively open neighborhood of $(\frac{\pi}{2},0,0)$ in the conclusion of 
 Proposition~\ref{prop:expansions}, and let $(\rho, \varepsilon, \tau)\in U$.  Then the synchronous state $\psi=0$ is exponentially stable under~\eqref{eq:psi_ord2}  if and only if $\lambda_{\rm sync} <0$, whereas the splay state $\psi=\pi$ is exponentially stable if and only if $\lambda_{\rm splay} <0$. The former happens precisely when $\rho < \rho_{\rm sync}^{\pi/2}(\eps, \tau)$ and the latter when  $\rho > \rho_{\rm splay}^{\pi/2}(\rho, \varepsilon, \tau)$, so the
  states are both exponentially stable when
  $$\rho_{\rm splay}^{\pi/2}(\eps, \tau) <\rho < \rho_{\rm sync}^{\pi/2}(\eps, \tau)\, .$$
 It follows from Proposition~\ref{prop:expansions} that there exists a constant $C>0$ such that 
 $$\rho_{\rm sync}^{\pi/2}(\eps, \tau) > \frac{\pi}{2}+\omega\tau + \varepsilon \alpha^{-1} - C(\eps^2+\eps\tau)\ \mbox{and}\ \rho_{\rm splay}^{\pi/2}(\eps, \tau) < \frac{\pi}{2}+\omega\tau - \eps \alpha^{-1} +  C (\eps^2+\eps\tau) \, .$$ 
By shrinking the sets $U, V$ in the conclusion of Proposition~\ref{prop:expansions} if necessary, we can arrange that this inequality holds for all $(\eps, \tau)\in V$. 
Now we define the  subset $$\tilde U : = \left\{ (\rho, \eps, \tau) \in U \ |\ 0< \varepsilon + \tau < \frac{1}{2C\alpha}  \, , \, \frac{\pi}{2} + \omega \tau - \frac{\varepsilon}{2\alpha} <\rho <  \frac{\pi}{2} + \omega \tau  + \frac{\varepsilon}{2\alpha} \right\} \subset U\, . $$ 
The set $\tilde U$ is nonempty and open and contains $(\frac{\pi}{2}, 0,0)$  in its closure. For $(\rho, \eps, \tau)\in \tilde U$ we have $C(\eps^2+\varepsilon \tau) < \frac{\varepsilon}{2\alpha}$, and therefore that
\begin{align}
\rho_{\rm splay}^{\pi/2}(\eps, \tau) & < \frac{\pi}{2}+ \omega\tau - \frac{\eps}{ \alpha}  +  C (\eps^2+\eps\tau)    \nonumber \\ \nonumber 
& < \frac{\pi}{2} + \omega \tau -\frac{\varepsilon}{2\alpha}   < \rho 
 < \frac{\pi}{2} + \omega \tau +\frac{\varepsilon}{2\alpha}     \\ \nonumber &  <  \rho_{\rm sync}^{\pi/2}(\eps, \tau) +C(\eps^2+\eps\tau) - \frac{\varepsilon}{2\alpha}  <  \rho_{\rm sync}^{\pi/2}(\eps, \tau)\, . 
\end{align}
This proves that for $(\rho, \eps, \tau)\in\tilde U$, the synchronous state and the splay state are both exponentially stable. 
  The analysis for $(\rho, \eps, \tau)$ near $(\frac{3\pi}{2},0,0)$ is similar.
\end{proof}

The one-dimensional phase-reduced dynamics~\eqref{eq:psi_ord2} predict the dynamics of the infinite-dimensional unreduced dynamics.
To compare the systems, we computed numerical solutions of the delay-coupled system~\eqref{eq:sl_coupled} for varying parameters using \verb|dde23| implemented in \textsc{Matlab} with random initial condition such that the phase difference was sampled uniformly on~$[0, 2\pi)$.
The coloring in Figure~\ref{fig:CompDDEPhaseRed} shows regions in the $(\rho, \tau)$-parameter plane where only in-phase synchrony is stable (blue) and regions where only anti-phase synchrony is stable (red). 
Region speckled in red and blue correspond to bistability of both synchronized solutions.

\begin{figure}
    \centering
    \includegraphics[width=1.0\linewidth]{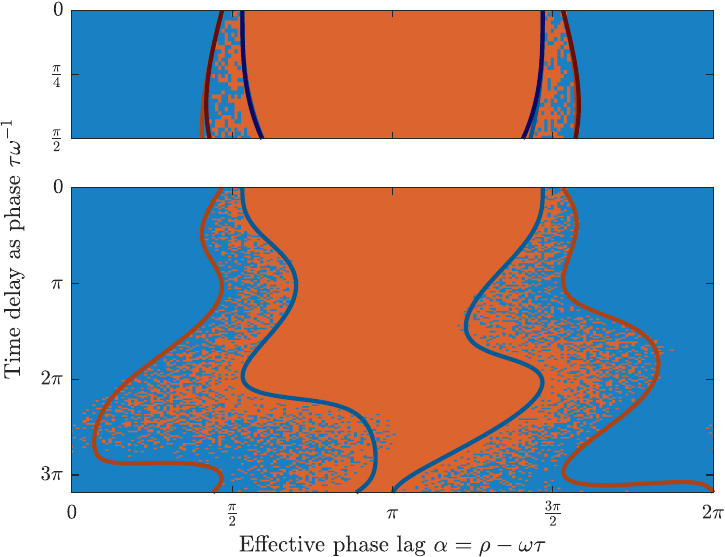}
    \caption{The bifurcation curves of the one-dimensional phase reduced equations~\eqref{eq:psi_ord2}---computed both analytically (light colored) and numerically (dark colored)---reflect the infinite-dimensional dynamics of the delay-coupled Stuart--Landau oscillators~\eqref{eq:sl_coupled} for $\eps=0.1$.
    The background color corresponds to $\Psi = \arg(z_1(T))-\arg(z_2(T))$ for numerical solutions~$z(t)$ of the DDE after $T=1000$ time units, for initial conditions chosen with a  uniformly random phase difference $\Psi(0)$; blue corresponds to in-phase synchrony ($\Psi=0$), red corresponds to anti-phase synchrony ($\Psi=\pi$). 
    The top panel shows the analytic approximation of the bifurcation lines of in-phase synchrony $\psi=0$ (dark blue line) and anti-phase synchrony $\psi=\pi$ (dark red line) predicted from the phase-reduced dynamics~\eqref{eq:psi_ord2} in Proposition~\ref{prop:expansions}.
    For comparison, the bifurcation lines obtained by numerically solving the bifurcation equations~\eqref{eq:lambdasync}--\eqref{eq:lambdasplay}
    for in-phase synchrony (blue line) and anti-phase synchrony (red line) only deviate slightly from the analytical approximations.
    The bottom panel shows the numerically computed bifurcation lines for a larger range of time delays.
    These indicate that the second-order phase reduction gives a reasonable approximation of the infinite-dimensional delay equation even for larger delays.
    }
    \label{fig:CompDDEPhaseRed}
\end{figure}

The approximate bifurcation boundaries computed in Proposition~\ref{prop:expansions} give a good approximation of the bifurcations of the DDE for small delays; see the top panel in Figure~\ref{fig:CompDDEPhaseRed}.
Around $\alpha = \rho-\Omega \tau$, the Taylor  approximation~$\cc_\mathrm{sync}^{\pi/2}$ as specified in~\eqref{eq:rhosyncpiovertwoexpansion} gives the right boundary of the bistability region (where in-phase synchrony loses stability), while~$\cc_\mathrm{splay}^{\pi/2}$ approximates the left boundary; a similar observation can be made near $\alpha=\frac{3\pi}{2}$.
The parameter regions, where Corollary~\ref{cor:Bistab} guarantees bistability of in-phase and anti-phase synchrony for the finite-dimensional phase equations, lie between the red and blue curves.
In this parameter range, the unreduced infinite-dimensional system also shows bistability between in-phase and anti-phase synchrony.

But even for larger delays and reasonable strong coupling the phase reduction yields a good approximation for the infinite-dimensional dynamics; cf.~bottom panel in Figure~\ref{fig:CompDDEPhaseRed}.
The blue and red lines correspond to the bifurcation lines of in-phase and anti-phase synchrony computed numerically from~\eqref{eq:lambdasync}.
They capture the dynamics of the unreduced system qualitatively for time delays exceeding one period even at a moderate coupling strength of $\eps=0.1$.

\section{Discussion and outlook} \label{sec:discussion}

Our approach provides a systematic method for phase reduction of delay-coupled oscillator networks.
The key idea is to simultaneously compute an approximation of the invariant torus in the infinite-dimensional phase space (through~$\ord{e}{\epsilon},\ord{E}{\epsilon}$) and the phase reduced dynamics~$\ord{f}{\epsilon}$.
The result is a system that approximates the dynamics on the entire invariant torus rather than in a neighborhood of a specific solution (such as the in-phase synchronized solution).
Computing the embeddings~$\ord{e}{\epsilon},\ord{E}{\epsilon}$ also has the advantage that we obtain phase/amplitude relationships. 
Specifically, for a given phase, the point~$\ord{e}{\epsilon}(\phi)\in \R^n$ gives the physical state of the system. Our method relies on the normal hyperbolicity of the invariant torus; if this assumption is not satisfied as the coupling strength~$\eps$ is increased, then incorporating additional degrees of freedom~\cite{Kotani2020} is required to capture the synchronization dynamics of the full system.

It would be desirable to extend our analysis beyond finite-dimensional oscillators with delay-coupling to oscillators with intrinsic delay.
In this case, the uncoupled dynamics are already infinite-dimensional.
The main technical challenge to be overcome is that the Floquet spectrum of the uncoupled dynamics is then infinite.

But even without delay in the nodes, the computations of the phase reduction to higher order becomes increasingly complex for larger networks of more general oscillators.
A computer implementation of the algorithm presented here would allow us to compute the phase reduction for general models in a systematic way---this is work in progress~\cite{Wedgwood2x}.
For scaling to large network sizes that are relevant in many real-world networks, a computer implementation would also benefit from decomposing the network into motifs.
For example, one may be able to compute phase reductions independently for specific motifs and {then combine the results.}
Such phase reduction techniques will allow to analyze the synchronization dynamics of coupled oscillators, for example, by elucidating how time delays enter nonpairwise phase interactions~\cite{Bick2021,Battiston2020,Battiston2025} as done for coupling without delay~\cite{Bick2023}.

To conclude, we comment on a functional analytic question that arises in our approach, namely the choice of underlying state space in Step~1 of Section~\ref{sec:method}.
There, we argued that every solution of the DDE~\eqref{eq:DDE} is also a solution of the coupled ODE-transport system~\eqref{eq:coupled_DDE}--\eqref{eq:coupled_DDE_bc}, and have subsequently \emph{chosen} the coupled system~\eqref{eq:coupled_DDE} as the starting point of our further analysis. However, we have not chosen a state space on which we consider the coupled system~\eqref{eq:coupled_DDE}. This is because there is no single state space for~\eqref{eq:coupled_DDE} that satisfies all our requirements, i.e. there is no single space on which i) every solution of the ODE-transport system~\eqref{eq:coupled_DDE}--\eqref{eq:coupled_DDE_bc} is also a solution of the DDE~\eqref{eq:DDE} and ii) conjugating the flow of the reduced dynamics~\eqref{eq:phase_intro} to the flow of the ODE-transport system~\eqref{eq:coupled_DDE}--\eqref{eq:coupled_DDE_bc} leads to equations that are practically tractable. 

If we require that every solution of~\eqref{eq:coupled_DDE}--\eqref{eq:coupled_DDE_bc} is also a solution of~\eqref{eq:DDE}, we need to choose a state space where each element is of the form $(c, \psi)$, with $c \in \mathbb{R}^n$ and $\psi: [-\tau, 0) \to \mathbb{R}^n$ a function that satisfies $c = \lim_{s \uparrow 0} \psi(s)$ (a concrete example is the space $C([-\tau, 0], \mathbb{R}^n) \simeq \{(c, \psi) \mid \lim_{s \uparrow 0} \psi(s) \}$). On such a space, one can show~\cite{diekmann2012} that the generator of the dynamics of~\eqref{eq:coupled_DDE} will have a domain that is dependent on $\epsilon$.  
Hence on such a state space, writing down a semi-conjugacy between the flow of the reduced problem~\eqref{eq:phase_intro} and the flow of the ODE-transport system~\eqref{eq:coupled_DDE} leads to an equation that is not practically tractable. 
In contrast, if we consider a state space where elements are of the form $(c, \psi)$ but drop the condition that $c = \lim_{s \uparrow 0} \psi(s)$, then from a practical perspective it is clear how to conjugate solutions of~\eqref{eq:phase_intro} to solutions of~\eqref{eq:coupled_DDE}--\eqref{eq:coupled_DDE_bc}, but the price that we pay is that we do not establish that every solution of the ODE-transport system~\eqref{eq:coupled_DDE}--\eqref{eq:coupled_DDE_bc} is also a solution of the DDE~\eqref{eq:DDE}. 

The problem outlined above is a specific instance of a problem that is fundamental to the nonlinear theory for DDE, and the solution found in~\cite{diekmann2012} is to mediate between two different state spaces: we first choose a state space (referred to as the `small space') which satisfies requirement (i). Next, we embed this state space in a larger space  which satisfies requirement (ii), and in the last step show that we can restrict to the smaller state space. We expect that conjugacy equations~\eqref{eq:con_full_pt1}--\eqref{eq:con_full_pt2} can be derived using this framework, but rigorously mediating between the small and the big state space requires a substantial amount of duality theory that we do not want to focus on here. Rather, we work under the assumption that solutions are smooth and focus on detailing how the conjugacy equation~\eqref{eq:con_full_pt1}--\eqref{eq:con_full_pt2} leads to a solvable problem in practice. 

\subsection*{Acknowledgement}

We would like to thank Kyle Wedgwood for feedback on previous versions of this manuscript.

\bibliographystyle{siamplain}
\bibliography{reduction}
\end{document}